\documentclass[final,3p,times]{elsarticle}

\usepackage{amsmath,amssymb,amscd,mathtools}
\usepackage{enumitem}
\usepackage{graphicx}
\usepackage{xcolor}
\usepackage{subcaption}
\captionsetup{size=small}
\captionsetup[sub]{size=small}
\usepackage{algorithm}
\usepackage{algpseudocode}
\usepackage{diagbox}
\usepackage[colorlinks=true, allcolors=blue]{hyperref}
\usepackage{xparse}
%\usepackage{lineno}
%\modulolinenumbers[5]

% Plots
\usepackage{tikz}
%\usetikzlibrary{patterns}
\usepackage{pgfplots}
\usepackage{pgfplotstable}
%\usetikzlibrary{spy}

\usepackage{booktabs}
\usepackage{siunitx}

%% Bibliography
\biboptions{sort&compress}
\bibliographystyle{elsarticle-num}

\usetikzlibrary{matrix,backgrounds}
\pgfdeclarelayer{myback}
\pgfsetlayers{myback,background,main}

\tikzset{mycolor/.style = {line width=1bp,color=#1}}%
\tikzset{myfillcolor/.style = {draw,fill=#1}}%

\NewDocumentCommand{\highlight}{O{blue!40} m m}{%
\draw[mycolor=#1] (#2.north west)rectangle (#3.south east);
}

\NewDocumentCommand{\fhighlight}{O{blue!40} m m}{%
\draw[myfillcolor=#1] (#2.north west)rectangle (#3.south east);
}

\newcommand{\A}{{\mathcal A}}

\def\bff{{\mathbf f}}
\def\bg{{\mathbf g}}

\def\bsigma{{\boldsymbol \sigma}}

\def\diag{{\bf diag }}

\def\interpolate{{\blkMat{P}_{\ell+1}^\ell}}
\def\restrict{{\blkMat{R}_\ell^{\ell+1}}}

\def\inject{{\blkMat{Q}_\ell^{\ell+1}}}

% --- Nicola's new commands
\newcommand{\tensorOne}[1]{\boldsymbol{#1}}
\newcommand{\tensorTwo}[1]{\mathbb{#1}}

\renewcommand{\Vec}[1]{%
  \ifcat\noexpand#1\relax % check if the argument is a control sequence (Greek)
    \boldsymbol{#1}% Greek
  \else
    \mathbf{#1}% single character
  \fi
}
\newcommand{\Mat}[1]{#1}

\newcommand{\blkVec}[1]{\Vec{#1}}
\newcommand{\blkMat}[1]{\Vec{#1}}

\newdefinition{rmk}{Remark}
%\newdefinition{lemma}{Lemma}
\newdefinition{definition}{Definition}
%\newdefinition{proposition}{Proposition}
%\newproof{proof}{Proof}

%% The amsthm package provides extended theorem environments
\usepackage{amsthm}
        \theoremstyle{plain}
        \newtheorem{thm}{Theorem}
        \newtheorem{proposition}[thm]{Proposition}
       	\newtheorem{lemma}{Lemma}

\definecolor{butter1}{rgb}{0.988,0.914,0.310}
\definecolor{chocolate1}{rgb}{0.914,0.725,0.431}
\definecolor{chameleon1}{rgb}{0.541,0.886,0.204}
\definecolor{skyblue1}{rgb}{0.247,0.524,0.912}
\definecolor{applegreen}{rgb}{0.55, 0.71, 0.0}
\definecolor{blue-green}{rgb}{0.0, 0.87, 0.87}
\definecolor{plum1}{rgb}{0.678,0.498,0.659}
\definecolor{scarletred1}{rgb}{0.937,0.161,0.161}

\journal{arXiv}

\begin{document}
\title{Nonlinear multigrid based on local spectral coarsening for heterogeneous diffusion problems}

\begin{frontmatter}
\author[casc]{Chak Shing Lee\corref{cor1}}
\ead{cslee@llnl.gov}
\author[total,aeed,stanford]{Fran\c cois Hamon}
\ead{francois.hamon@total.com}
\author[aeed]{Nicola Castelletto}
\ead{castelletto1@llnl.gov}
\author[casc,psu]{Panayot S. Vassilevski}
\ead{vassilevski1@llnl.gov, panayot@pdx.edu}
\author[aeed]{Joshua White}
\ead{white230@llnl.gov}

\cortext[cor1]{Corresponding author.}
\address[casc]{Center for Applied Scientific Computing, Lawrence Livermore National Laboratory, Livermore, CA 94550, USA}
\address[total]{Total E\&P Research and Technology, Houston, TX 77002, USA}
\address[aeed]{Atmospheric, Earth, and Energy Division, Lawrence Livermore National Laboratory, Livermore, CA 94550, USA}
\address[stanford]{Department of Energy Resources Engineering, Stanford University, Stanford, CA 94305, USA}
\address[psu]{Fariborz Maseeh Department of Mathematics and Statistics, Portland State University, Portland, OR 97201, USA} 
\begin{abstract}
This work develops a nonlinear multigrid method for diffusion problems discretized by cell-centered finite
volume methods on general unstructured grids. 
The multigrid hierarchy is constructed algebraically using aggregation of degrees of freedom and spectral decomposition of reference linear operators associated with the aggregates. 
For rapid convergence, it is important that the resulting coarse spaces have good approximation properties. In our approach, the approximation quality can be directly improved by including more spectral degrees of freedom in the coarsening process.
Further, by exploiting local coarsening and a piecewise-constant approximation when evaluating the nonlinear component, the coarse level problems are assembled and solved without ever re-visiting the fine level, an essential element for multigrid algorithms to achieve optimal scalability. 
Numerical examples comparing relative performance of the proposed nonlinear multigrid solvers with standard single-level approaches---Picard's and Newton's methods---are presented.
Results show that the proposed solver consistently outperforms the single-level methods, both in efficiency and robustness.
\end{abstract}

\begin{keyword}
nonlinear multigrid \sep full approximation scheme \sep algebraic multigrid \sep local spectral coarsening \sep unstructured \sep finite volume method
\end{keyword}

\end{frontmatter}

%\linenumbers

\allowdisplaybreaks

\section{Introduction}\label{sec:intro}
This paper explores multilevel nonlinear solution strategies for diffusion processes, with an emphasis on fluid flow through highly heterogeneous porous media.  
The goal is to develop a scalable strategy, in the sense that the computational cost is proportional to the problem size.
This requirement is essential for enabling extreme-scale simulations in scientific computing.

For nonlinear diffusion problems, scalability is most frequently achieved by exploiting multilevel solvers.
In practice, two approaches are commonly adopted.
The first strategy is to apply a global linearization method---e.g. Picard's or Newton's method---and solve the resulting linearized systems using multilevel linear solvers.
This approach is often the method of choice for problems encountered in science and engineering applications; see, for example, \cite{keyes06}.  
The second strategy, and the one of interest here, is to apply the multigrid idea directly to the nonlinear problem.
Recent works \cite{luo2015, christensen16, toft18} have demonstrated that well-designed nonlinear multigrid can be more efficient and robust in certain applications.

A key requirement for nonlinear multigrid is the ability to construct coarse spaces with good approximation properties.
This is usually not an issue for geometric multigrid, which is defined on a sequence of successively refined grids.
However, for PDEs discretized on general unstructured meshes, the coarse grids are typically obtained by agglomerating fine-grid elements.  This can result in coarse elements with very complex shapes.
In the context of finite-element and mixed finite-element discretizations \cite{dumett02,jones03,christensen18}, operator-dependent interpolation operators have been obtained in previous works by restricting polynomials on coarse element boundaries and then extending them into neighboring coarse elements.
In this paper, we start from a finite-volume discretization, in which case higher-order polynomials are not in the natural approximation space.
Instead of polynomials, we therefore employ a local spectral coarsening method recently developed for graph-Laplacians \cite{barker17,ml-spectral-coarsening}. Coarse degrees of freedom are defined using the eigenvectors of the local operator.

The current work proposes a method for nonlinear diffusion problems with several compelling features:
\begin{enumerate}
\item The algorithm can be applied to nonlinear diffusion problems on general unstructured grids.
\item The coarsening is done in a mixed setting (both pressure and flux are coarsened).
\item The approximation properties of coarse spaces can be enhanced by flexible local enrichment.
\item Coarse level problems may be assembled and solved without re-visiting finer levels during the multigrid cycle.
\end{enumerate}
The final point is a crucial property for achieving scalability, as fine-level computations can form a significant bottleneck.
The coarse assembly and solution is achieved using the aggregation-based local coarsening and a piecewise-constant approximation of the solution when evaluating the nonlinear component.
Moreover, since the coarsening is done in a local fashion, the global coarse nonlinear operator can be applied by utilizing the local coarse operator and the local-to-global map.
For the diffusion problem considered in this paper, this allows us to derive an explicit formula for the Jacobian matrix in the Newton iterations, as well as an algebraic hybridization solver for the linearized problems.

Numerical examples demonstrate that the nonlinear multigrid solver presented in this paper is more robust than standard single-level solvers, in the sense that it converges in fewer iterations for strongly nonlinear problems.
We also show that the proposed algorithm is highly scalable, significantly reducing wallclock time compared to its single-level counterparts.
%
%These results will serve as the basis for the construction of a robust and efficient nonlinear multigrid solver for multiphase systems that will be presented in future work.

The remainder of the paper is organized as follows. Section~\ref{sec:model} presents the nonlinear PDE and its finite-volume discretization.
The components of the proposed nonlinear multigrid algorithm are discussed in detail in Section~\ref{sec:multigrid}.
We then present numerical examples comparing the proposed multigrid solver to single-level Picard's and Newton's methods in Section~\ref{sec:numerics}, followed by concluding remarks in Section~\ref{sec:conclude}.

\section{Model problem}\label{sec:model}
We consider a nonlinear diffusion problem in a heterogeneous domain, such as those typically encountered when modeling fluid flow in porous media.
The central PDE is a mass conservation equation for incompressible single-phase flow \cite{coats2000}, in which the diffusion tensor $\tensorTwo{K}$ depends on the pressure $p$ in a differentiable manner.
The strong form of the boundary value problem (BVP) is stated as follows:
\begin{subequations}
\begin{align}
  \intertext{Given $f: \Omega \rightarrow \mathbb{R}$, $g_D: \partial \Omega_D \rightarrow \mathbb{R}$ and $g_N: \partial \Omega_N \rightarrow \mathbb{R}$, find $p: \overline{\Omega} \rightarrow \mathbb{R}$ such that}
	\nabla \cdot \tensorOne{q} (\tensorOne{x}, p )
	&=
	f(\tensorOne{x}) \,,
	&& \tensorOne{x} \in \Omega
	&& \mbox{(mass conservation)},
	\label{eq:pres_equation} \\
	\tensorOne{q} (\tensorOne{x}, p )
	&=
  -\mathbb{K}(\tensorOne{x}, p ) \cdot \nabla p (\tensorOne{x}) \,,
	&& \tensorOne{x} \in \Omega
	&& \mbox{(Darcy's law)} ,
	\label{eq:Darcy_law} \\
  p(\tensorOne{x})
  &=
  g_D(\tensorOne{x}) \,,
  && \tensorOne{x} \in \partial \Omega_D
  && \mbox{(prescribed boundary pressure)},
  \label{eq:BC_dir}\\
  - \tensorOne{q}(\tensorOne{x}) \cdot \tensorOne{n}(\tensorOne{x})
  &=
  g_N(\tensorOne{x}) \,,
  && \tensorOne{x} \in \partial \Omega_N
  && \mbox{(prescribed boundary flux)}.
  \label{eq:BC_neu}
\end{align}
\label{eq:model}\null
\end{subequations}
Here, $\Omega$ is a bounded and connected polygonal domain in $\mathbb{R}^d$, $d = 2, 3$.
The domain boundary $\partial \Omega$ is split into two disjoint portions such that $\partial \Omega = \overline{\partial \Omega_D \cup \partial \Omega_N}$,  with $\tensorOne{n}$ the unit outward normal to $\partial\Omega$ and $\tensorOne{x}$ the position vector in $\mathbb{R}^d$.
We assume that $\tensorTwo{K}$---representing the medium's absolute permeability tensor divided by fluid viscosity---can be expressed in the functional form

\begin{equation}
  \mathbb{K}(\tensorOne{x}, p ) = \mathbb{K}_0 (\tensorOne{x}) \kappa(p),
  \label{eq:kappa}
\end{equation}

\noindent
where $\mathbb{K}_0$ is a heterogeneous and anisotropic diffusion tensor that is independent of $p$, while $\kappa(p)$ is a dimensionless pressure-dependent scalar multiplier.
To demonstrate the robustness of the proposed nonlinear solution algorithm, we will consider several highly nonlinear permeability-pressure functional relationships.
  As a representative example, in the first two numerical benchmarks below we define the pressure-dependent scalar multiplier as
  \begin{equation}
    \kappa(p) := e^{\alpha p},
    \label{eq:permeability_pressure_relationship}
  \end{equation}
  where $\alpha > 0$ is a user-defined parameter that controls the strength of the nonlinearity.

\subsection{Finite Volume discretization}\label{sec:FV_tpfa}
The BVP \eqref{eq:model} is discretized by a cell-centered finite-volume (FV) method on a conforming triangulation of the domain.
First, some notation is defined.
Let $\mathcal{T}$ be the set of cells in the computational mesh such that $\overline{\Omega} = \sum_{\tau \in \mathcal{T}} \overline{\tau}$.
For a cell $\tau_K \in \mathcal{T}$, with $K$ a global index, let $|\tau_K|$ denote the $d-$measure, $\partial \tau_{K} = \overline{\tau}_{K} \setminus \tau_{K}$ the boundary, $\tensorOne{x}_{K}$ the barycenter, and $\tensorOne{n}_{K}$ the outer unit normal vector.
Let $\mathcal{E}$ be the set of interfaces---i.e., edges in $\mathbb{R}^2$ or faces in $\mathbb{R}^3$---in the computational mesh such that $\mathcal{E} =\mathcal{E}_{\text{int}} \cup \mathcal{E}_D \cup \mathcal{E}_N$, with $\mathcal{E}_{\text{int}}$ (respectively $\mathcal{E}_{D}$, $\mathcal{E}_{N}$) the set of interfaces included in $\Omega$ (respectively $\partial \Omega_D$, $\partial \Omega_N$).
An internal interface $\varepsilon$ shared by cells $\tau_K$ and $\tau_L$ is denoted as $\varepsilon_{K,L} = \partial \tau_K \cap \partial \tau_L$, with the indices $K$ and $L$ such that $K < L$.
A boundary interface belongs to a single cell $\tau_K$ and is denoted $\varepsilon_K$.
The $(d-1)$-measure of an interface is $|\varepsilon|$.
A unit vector $\tensorOne{n}_{\varepsilon}$ is introduced to define a unique orientation for every interface.
We set $\tensorOne{n}_{\varepsilon} = \tensorOne{n}_{K}$ both for internal and boundary interfaces.
To indicate the mean value of a quantity $(\cdot)$ over an interface $\varepsilon$ or a cell $\tau_K$, we use the notation $(\cdot)_{\left| \right. \varepsilon}$ and $(\cdot)_{\left| \right. K}$, respectively.

The derivation of the FV form   of \eqref{eq:model} consists of writing the pressure equation for a generic cell in integral form.
Making use of Gauss' divergence theorem, the following set of balance equations is obtained
\begin{equation}
\sum_{\varepsilon \in \partial \tau_K} \int_{\varepsilon} \tensorOne{q} \cdot \tensorOne{n}_{K} \;d\Gamma = \int_{\tau_K} f \;d\Omega \qquad \forall \tau_{K} \in \mathcal{T}. 
\label{eq:mass_balance}
\end{equation}
The FV scheme requires discrete approximations for the pressure field and the Darcy flux through cell interfaces.
We consider a piecewise constant approximation for pressure.
For each cell $\tau_{K} \in \mathcal{T}$, we introduce one degree of freedom, $p_K$.
We denote by $\sigma_{\varepsilon}$ a numerical flux approximating the Darcy flux through $\varepsilon \in \mathcal{E}$, i.e. $\sigma_{\varepsilon}  \approx \int_{\varepsilon} \tensorOne{q} \cdot \tensorOne{n}_{\varepsilon} \;d\Gamma$.
In general, $\sigma_{\varepsilon}$ can be computed using a suitable functional dependence on cell pressure values as well as Dirichlet and Neumann boundary data.
In this work we consider a two-point flux approximation (TPFA).
Introducing a collocation point $\tensorOne{x}_{\varepsilon}$ for every $\varepsilon \in \mathcal{E}$, which is used to enforce point-wise pressure continuity across internal interfaces, the TPFA numerical flux is defined as \cite{EymGalHer00}
\begin{equation}
  \sigma_{\varepsilon} =
  \begin{dcases}
  - \Upsilon_{KL} (p_{L} - p_{K}), &\text{if } \varepsilon = \varepsilon_{K,L} \in \mathcal{E}_{int}, \\
  - \Upsilon_{K,\varepsilon} ( g_{D \left| \right. \varepsilon}, - p_{K}), &\text{if } \varepsilon = \varepsilon_{K} \in \mathcal{E}_D, \\  
  - | \varepsilon| \, g_{N \left| \right. \varepsilon}, &\text{if } \varepsilon = \varepsilon_{K} \in \mathcal{E}_N,
  \end{dcases}
  \label{eq:TPFA_flux}
\end{equation}
where the transmissibility $\Upsilon_{KL} = \left( \frac{1}{\Upsilon_{K,\varepsilon}} + \frac{1}{\Upsilon_{L,\varepsilon}}  \right)^{-1}$ is a weighted harmonic average of the one-sided transmissibility $\Upsilon_{K,\varepsilon}$ and $\Upsilon_{L,\varepsilon}$.
The one-sided transmissibility coefficients---also known as half transmissibilities---read
\begin{align}
  \Upsilon_{i,\varepsilon} &=
  \kappa(p_{i}) \overline{\Upsilon}_{i,\varepsilon},
  &
  \overline{\Upsilon}_{i,\varepsilon} &=
  |\varepsilon| \frac{\tensorOne{n}_{i} \cdot \mathbb{K}_{0 \left| \right. i} \cdot (\tensorOne{x}_{\varepsilon} - \tensorOne{x}_{i})}{||\tensorOne{x}_{\varepsilon} - \tensorOne{x}_{i}||_2^2},
  &
  i &= \{K, L \},
  \label{eq:half-transmissibility}
\end{align}
where $\overline{\Upsilon}_{i,\varepsilon}$ indicates the geometric (constant) term of $\Upsilon_{i,\varepsilon}$.
Denoting by $\sigma_{K,\varepsilon} = \tensorOne{n}_{K}(\tensorOne{x}_{\varepsilon}) \cdot \tensorOne{n}_{\varepsilon}  \, \sigma_{\varepsilon}$ the outgoing flux across $\varepsilon$ for cell $\tau_{K}$, the FV form of \eqref{eq:mass_balance} may be stated as \cite{EymGalHer00}
\begin{equation}
  \sum_{\varepsilon \in \partial \tau_{K} } \sigma_{K,\varepsilon} = |\tau_{K}| f_{\left. \right| K}, \qquad \forall \tau_{K} \in \mathcal{T}.
  \label{eq:mass_balance_FV}
\end{equation}

We now introduce coefficient vectors $\Vec{p} = (p_{K})_{\tau_{K} \in \mathcal{T}}$ and $\Vec{\sigma} = (\sigma_{\varepsilon})_{\varepsilon \in \mathcal{E}}$---i.e. the collection of all pressure and flux degrees of freedom, respectively.
To enable the coarsening strategy used in the proposed multilevel nonlinear solver, we rewrite \eqref{eq:mass_balance_FV} as an augmented nonlinear system
\begin{equation}
  \begin{bmatrix}
    M(\Vec{p}) & D^T\\
    D      & 0
  \end{bmatrix} 
  \begin{bmatrix}
    \bsigma\\
    \blkVec{p}
  \end{bmatrix} 
  =
  -
  \begin{bmatrix}
    \bg\\
    \bff
  \end{bmatrix},
  \label{eq:FV_matrix}
\end{equation}
with the following matrix and vector definitions:
\begin{subequations}
\begin{align}
[M(\Vec{p})]_{ij} &=
\begin{cases}
  \left( \frac{1}{\kappa(p_{K}) \overline{\Upsilon}_{K,\varepsilon_i}} + \frac{1}{\kappa(p_{L}) \overline{\Upsilon}_{L,\varepsilon_i}} \right) \delta_{ij}, & \text{if } \varepsilon_i = \varepsilon_{K,L} \in \mathcal{E}_{\text{int}}, \\
  \frac{1}{\kappa(p_{K}) \overline{\Upsilon}_{K,\varepsilon_i}} \delta_{ij}, & \text{if } \varepsilon_i = \varepsilon_{K} \in \mathcal{E}_D, \\
  \delta_{ij}, & \text{if } \varepsilon_i = \varepsilon_{K} \in \mathcal{E}_N,
\end{cases} 
&
[D^T]_{ij} &=
\begin{cases}
  \delta_{iL} - \delta_{iK}, & \text{if } \varepsilon_i = \varepsilon_{K,L} \in \mathcal{E}_{\text{int}}, \\
  -\delta_{iK}, & \text{if }  \varepsilon_i = \varepsilon_{K} \in \mathcal{E}_{D}, \\
  0, & \text{if } \varepsilon_i = \varepsilon_{K} \in \mathcal{E}_N,
\end{cases} \\
[ \bg ]_i &=
\begin{cases}
  0, & \text{if } \varepsilon_i \in \mathcal{E}_{\text{int}}, \\
  g_{D,\varepsilon_i}, & \text{if } \varepsilon_i \in \mathcal{E}_D, \\
  |\varepsilon_i| g_{N,\varepsilon_i}, & \text{if } \varepsilon_i \in \mathcal{E}_{N},
\end{cases} 
&
[ \mathbf{f} ]_K &=
  |\tau_K| f_{\left. \right| \tau_K} + \sum_{\varepsilon \in \partial \tau_K \cap \mathcal{E}_N} |\varepsilon| g_{N,\varepsilon} ,
\end{align}
\end{subequations}
where $\delta_{ij}$ is the Kronecker delta.
Note that $M$ is diagonal, while $D$ and $D^T$ resemble the discrete divergence and gradient operators respectively. 

\begin{rmk}
Different strategies can be used to choose collocation points $\tensorOne{x}_{\varepsilon}$.
Following \cite{KarDur16}, for internal interfaces we select $\tensorOne{x}_{\varepsilon}$ as the intersection $\varepsilon_{K,L}$ and the line connecting $\tensorOne{x}_{K}$ and $\tensorOne{x}_{L}$.
For a boundary interface, $\tensorOne{x}_{\varepsilon}$ is chosen as the orthogonal projection of $\tensorOne{x}_{K}$ on $\varepsilon_K$.
Note that in $\mathbb{R}^3$ we associate a plane characterized by a mean normal $\tensorOne{n}_{\varepsilon}$ to any non-planar face.
\end{rmk}

\begin{rmk}
Instead of \eqref{eq:FV_matrix}, a standard single-level cell-centered FV discretization would typically assemble the reduced system $\Mat{A}( \Vec{p} ) = \Vec{f}$, with $\Mat{A}( \Vec{p} ) = D (M (\Vec{p}) )^{-1} D^T \Vec{p} - D (M (\Vec{p}) )^{-1} \Vec{g}$.
\end{rmk}

\begin{rmk}
Because of the two-point structure, a linear TPFA approach produces a monotone finite-volume scheme characterized by linear systems with an M-matrix \cite{BerPle94} and small stencils.  
Unfortunately, the lack of consistency in linear TPFA may produce inaccurate results in the presence of distorted grids or highly anisotropic diffusion tensors \cite{Dro14}.
However, due to its robustness and simplicity, linear TPFA is the method of choice in many engineering applications, including reservoir simulation, motivating the development of dedicated multilevel solvers.
\end{rmk}

\subsection{Single-level nonlinear solver}

Introducing a vector $\blkVec{x}$ collecting all the flux and pressure degrees of freedom, the nonlinear problem \eqref{eq:FV_matrix} written in residual form is
\begin{equation}
  \blkVec{r}( \blkVec{x} ) = 0,
  \label{eq:nonlinear_system_residual_form}
\end{equation}
where the residual is 
\begin{equation}
  \blkVec{r}( \blkVec{x} ) := \blkMat{A} ( \blkVec{x} ) - \blkVec{b},
  \label{eq:fine_level_residual}
\end{equation}
and 
\begin{equation}
   \blkMat{A}(\blkVec{x}) := 
   \begin{bmatrix}
    \Mat{M}(\Vec{p}) & \Mat{D}^T\\
    \Mat{D}      & 0
  \end{bmatrix} 
  \begin{bmatrix}
    \bsigma\\
    \blkVec{p}
  \end{bmatrix}
    \quad \text{and} \quad
    \blkVec{b} := 
      -
  \begin{bmatrix}
    \Vec{g}\\
    \Vec{f}
  \end{bmatrix}.
  \label{eq:fine_level_operators}  
\end{equation}
Starting from an initial guess $\blkVec{x}_0$, a widely used strategy to solve \eqref{eq:nonlinear_system_residual_form} relies on a global, single-level nonlinear solver---typically Picard's or Newton's method---to iteratively drive the residual to below a user-defined tolerance.
At each nonlinear iteration, a system obtained by linearizing \eqref{eq:nonlinear_system_residual_form} about a current nonlinear iterate $\blkVec{x}_k$ is solved---with an iterative linear solver---to compute a solution update, $\Delta \blkVec{x}_k$.
This update is then applied to the solution iterate at nonlinear iteration $k$ to update
\begin{equation}
  \blkVec{x}_{k+1} := \blkVec{x}_k + s_k \Delta \blkVec{x}_k,
  \label{eq:solution_update}
\end{equation}
where $s_k\in (0,1]$ is a suitably chosen step length.
The permeability functions considered in this work---see for instance \eqref{eq:permeability_pressure_relationship}---introduce severe nonlinearities in \eqref{eq:nonlinear_system_residual_form} that can undermine the convergence of the nonlinear solver.
For such a problem, it is standard practice to employ a globalization technique to improve the radius of convergence.
The simple backtracking line search procedure summarized in Algorithm \ref{alg:backtracking} is one such technique.
It uses the step length $s_k$ in \eqref{eq:solution_update} as a scaling factor for the Newton search direction to avoid overshoots---i.e., solution updates that fail to decrease the global residual and often cause convergence failure.

\begin{algorithm}[t]
\caption{Backtracking line search}
\begin{algorithmic}[1]
\Function{\tt Backtracking}{$\blkVec{x}_k, \Delta \blkVec{x}_k, n_{\max}, \theta$}
        \State $n = 0$
        \State $s_k = 1$
	\While{$\blkVec{r}( \blkVec{x}_k + s_k \Delta \blkVec{x}_k) > \blkVec{r}( \blkVec{x}_k)$ {\bf and} $n < n_{\max}$}
	        \If{$\blkVec{r}( \blkVec{x}_k +  \frac12 s_k \Delta \blkVec{x}_k) > \theta \blkVec{r}( \blkVec{x}_k + s_k \Delta \blkVec{x}_k)$}
	            \State {\bf break}
	        \EndIf
		\State $s_k \leftarrow \frac12 s_k $
		\State $n \leftarrow n + 1$
	\EndWhile
	\State \Return{$\blkVec{x}_{k+1}  := \blkVec{x}_k + s_k \Delta \blkVec{x}_k$}
 \EndFunction
 \end{algorithmic}
 \label{alg:backtracking}
\end{algorithm}

\begin{rmk}
In the parameter list of Algorithm~\ref{alg:backtracking}, $n_{\max}$ designates the maximum number of backtracking steps.
The scalar $\theta \in (0,1]$ is a threshold such that if the relative residual reduction between two consecutive backtracking steps is greater than $\theta$, the algorithm terminates and returns the current backtracking step.
\end{rmk}

\begin{rmk}
\label{rmk:special_backtracking}
A backtracking algorithm designed specifically to tackle the nonlinearity arising from the permeability function \eqref{eq:permeability_pressure_relationship} has also been implemented and is presented in Section~\ref{sec:numerics}.
\end{rmk}

For single-level approaches, scalability is achieved by designing an efficient preconditioner for the linearized systems to accelerate the convergence of the linear solver.
For linearized systems arising from the discretization of elliptic PDEs, Algebraic MultiGrid (AMG) is frequently a key component of the linear solution strategy, particularly when using unstructured grids. 
The nonlinear solver proposed in this work fundamentally differs from the standard approach reviewed here.
Instead of linearizing the problem first, and then possibly applying a multilevel solver to the linearized system, we directly apply a multilevel solution algorithm to the nonlinear system.

\section{Nonlinear multigrid}\label{sec:multigrid}

We propose a multilevel solution strategy for the discrete nonlinear system \eqref{eq:FV_matrix} based on the Full Approximation Scheme (FAS) \cite{brandt77,henson2003multigrid}.
At each iteration of the nonlinear solver, the solution update is computed recursively by solving nonlinear problems constructed on a hierarchy of nested approximation spaces, referred to as levels.
We use the convention that level $\ell = 0$ refers to the finest level (i.e., the original problem), and a larger value of $\ell$ means a coarse level.
%
%
%The levels are ordered by increasing degree of coarsening, starting from the fine level $\ell = 0$.
%
This multigrid algorithm aims at improving the scalability and robustness of the nonlinear solver compared to the standard, single-level approach described in the previous section.

We first introduce some notation for the nonlinear problem solved on each level.
The superscript ``0'' denotes operators on the fine level.
Considering level $\ell$ and nonlinear iteration $k$, let $\blkVec{x}^{\ell}_k$ be a solution iterate, $\blkMat{A}^{\ell}(\blkVec{x}^{\ell}_k)$ be the coarse-level approximation of the fine-level operator $\blkMat{A}^0( \blkVec{x}^0_k)$ introduced in \eqref{eq:fine_level_operators}, and $\blkVec{b}^{\ell}_k$ be the approximation of the fine-level right-hand side $\blkVec{b}^0$. 
In residual form, the nonlinear problem solved on level $\ell$ at iteration $k$ reads
\begin{equation}
\blkVec{r}^{\ell}_k( \blkVec{x}^{\ell}_k ) := \blkMat{A}^{\ell}(\blkVec{x}_k^\ell) - \blkVec{b}^\ell_k = 0.
\label{eq:nonlinear_system_level_l}
\end{equation}
Algorithm~\ref{alg:fas} presents the key steps of a V-cycle FAS multigrid solver.  Note that the algorithm is recursively defined, so we work with generic levels $\ell$ and $\ell+1$.
Several building blocks are needed to define a specific solver for our nonlinear diffusion problem.  In particular, we need a nonlinear smoothing algorithm, three intergrid transfer operators---namely, an interpolation operator $\interpolate$, a restriction operator $\restrict$, and a projection operator $\inject$---and a strategy to construct the coarse problem operator $\blkMat{A}^{\ell+1}(\blkVec{x}_k^{\ell+1})$.  We begin with a high-level overview of these components, before presenting each in detail in subsequent sections.

In a nonlinear pre-smoothing step, we first apply a global linearization method, using either Picard's or Newton's method. Section~\ref{sec:smoothing} describes the construction of the required linearized operators at level $\ell$.
The linearized system is then solved to compute an update to the solution iterate, using a well-suited linear solver strategy (Section~\ref{sec:solvers-for-linearized-problems}).
Note that the pre-smoothing step includes a backtracking line search to avoid overshoots after the update.

After the smoothing step, we proceed to the algebraic construction of the coarse problem at level $\ell+1$.
This step requires the definition of a residual restriction operator from level $\ell$ to level $\ell+1$, denoted by $\restrict$, as well as the definition of a projection operator $\inject$ taking the level $\ell$ iterate to the level $\ell+1$ initial iterate.
In this work, $\restrict$ is taken as the transpose of the prolongation operator, $\interpolate$, i.e., $\restrict := (\interpolate)^T$.
%This requires the definition of a prolongation operator from level $\ell+1$ to level $\ell$, denoted by $\interpolate$, as well as the definition of a projection operator from level $\ell$ to level $\ell+1$, denoted by $\project$.
%
The projection and prolongation operators are constructed such that they satisfy $\inject \interpolate = \blkMat{I}^{\ell}$.
%
%The restriction operator, $\restrict$, is defined as the transpose of $\interpolate$, i.e., $\restrict := (\interpolate)^T$.
%
These operators are computed in a pre-processing step using a methodology based on local spectral coarsening \cite{ml-spectral-coarsening} reviewed in Section~\ref{sec:coarsening}.
Using the definition of the residual given in \eqref{eq:nonlinear_system_level_l}, the coarse nonlinear problem at level $\ell+1$ is obtained in residual form as
\begin{equation}
  \blkVec{r}^{\ell+1}_{k}( \blkVec{y}^{\ell+1}_k ) := \blkMat{A}^{\ell+1}( \underbrace{\inject \blkVec{x}_k^\ell + \blkVec{e}^{\ell+1}_k}_{\blkVec{y}_k^{\ell+1}}) - \underbrace{( \blkMat{A}^{\ell+1}( \inject \blkVec{x}_k^\ell ) - \restrict \blkVec{r}^{\ell}_k( \blkVec{x}^{\ell}_k ) )}_{\blkVec{b}^{\ell+1}_k} = 0,
  \label{eq:coarse_level_residual}
\end{equation}
where the correction to the solution iterate at level $\ell + 1$ is the difference between the solution of \eqref{eq:coarse_level_residual}, denoted by $\blkVec{y}^{\ell+1}_{k}$, and the projection of the solution iterate at level $\ell$:
\begin{equation}
  \blkVec{e}^{\ell+1}_k := \blkVec{y}_k^{\ell+1} - \inject \blkVec{x}_k^{\ell}.
\end{equation}
The coarse operator $\blkMat{A}^{\ell+1}$ can be efficiently obtained using the methodology presented in Section~\ref{sec:all-level-M}.
The coarse correction $\blkVec{e}^{\ell+1}_k$  is interpolated to the fine level as $\interpolate \blkVec{e}^{\ell+1}_k$, and it is then used to increment the current approximation of the solution at level $\ell$.
A backtracking line search is applied to this step to avoid an overshoot after the application of the interpolated coarse correction.
A nonlinear post-smoothing step---that also includes a backtracking line search---concludes the computations at this level.
%
%We note that on the coarsest level, we do not solve the problem exactly but instead compute an approximate solution by one smoothing step only.
%
\begin{algorithm}[t]
\caption{Nonlinear step at level $\ell$ in the Full Approximation Scheme}
\label{alg:fas}
\begin{algorithmic}[1]
\Function{\tt NonlinearMG}{$\ell, \blkVec{x}^\ell_k, \blkVec{b}_k^\ell$}
	\If{$\ell$ is the coarsest level}
		\State Solve $\blkMat{A}^\ell(\blkVec{x}_k^\ell) - \blkVec{b}_k^\ell = 0$ for $\blkVec{x}_k^\ell$
	\Else
		\State $\blkVec{x}_k^\ell \leftarrow$ \Call{\tt NonlinearSmoothing}{$\ell, \blkVec{x}_k^\ell, \blkVec{b}_k^\ell$} \label{alg:line:nonlinPreSmoothing}
		\State $\blkVec{x}^{\ell+1}_k \leftarrow \inject \blkVec{x}_k^\ell$
		\State $\blkVec{b}_k^{\ell+1} \leftarrow \blkMat{A}^{\ell+1}(\blkVec{x}_k^{\ell+1}) - \restrict(\blkMat{A}^\ell(\blkVec{x}_k^\ell) - \blkVec{b}_k^\ell )$ 
		\State $\blkVec{y}_k^{\ell+1} \leftarrow$ \Call{\tt NonlinearMG}{$\ell+1, \blkVec{x}_k^{\ell+1}, \blkVec{b}_k^{\ell+1}$}
		%\State $\blkVec{x}_k^\ell \leftarrow \blkVec{x}_k^\ell + \interpolate(\blkVec{y}_k^{\ell+1} - \blkVec{x}_k^{\ell+1}) $
		\State $\blkVec{x}_k^\ell \leftarrow$ \Call{\tt Backtracking}{$\blkVec{x}_k^\ell, \interpolate( \blkVec{y}_k^{\ell+1} - \blkVec{x}_k^{\ell+1}), n_{\max}, \theta$} 
		\State $\blkVec{x}_k^\ell \leftarrow$ \Call{\tt NonlinearSmoothing}{$\ell, \blkVec{x}_k^\ell, \blkVec{b}_k^\ell$} \label{alg:line:nonlinPostSmoothing}
  	\EndIf
	\State \Return{$\blkVec{x}_k^\ell$}
 \EndFunction
 \end{algorithmic}
\end{algorithm}
We are now in a position to review the key components of the nonlinear multigrid solver, starting from the construction of the projection 
 and prolongation operators, $\inject$ and $\interpolate$.

\subsection{Construction of the coarse approximation spaces}\label{sec:coarsening}

The construction of the coarse approximation spaces is a key component of the nonlinear multigrid solver developed in this work. 
%
% Here I need to discuss the approximation properties
%
This pre-processing step is achieved by applying the multilevel spectral coarsening method for graph-Laplacians introduced in \cite{ml-spectral-coarsening}.
We underline that this coarsening method exploits the saddle-point structure of \eqref{eq:fine_level_residual}-\eqref{eq:fine_level_operators} and is the main motivation for considering the problem in mixed form.

The starting point of the methodology is the definition of a hierarchy of coarse triangulations in which the coarse cells are obtained by agglomerating fine cells.
Using the fine triangulation $\mathcal{T}$ introduced in Section~\ref{sec:FV_tpfa}, we construct an undirected graph, $G_{\mathcal{T}}$, whose vertices represent the cells in the fine triangulation.
In $G_{\mathcal{T}}$, two vertices are connected by an edge if the corresponding cells share an interface.
For a given discrete pressure, $\Vec{p}$, we note that
\begin{equation}
  \Mat{L} := \Mat{D}^0 (\Mat{M}^0 (\Vec{p}))^{-1}( \Mat{D}^0)^T
\end{equation}
is a weighted graph-Laplacian on $G_{\mathcal{T}}$ as defined in \cite{ml-spectral-coarsening}.

A graph partitioner (METIS) \cite{karypis1998fast} is then employed to partition $G_{\mathcal{T}}$ into connected, non-overlapping subsets of vertices defining aggregates of fine cells by duality.
After the partitioning, the cell aggregates are post-processed to ensure that a coarse interface between two aggregates is also a connected subset of fine interfaces.
This methodology, illustrated in Fig.~\ref{fig:aggregates-mesh-graph} for an unstructured mesh, results in a coarse triangulation made of non-standard coarse cells with non-planar interfaces.
The cell agglomeration procedure is applied recursively to obtain a hierarchy of coarse meshes.

The coarse triangulations are then used to build the nested approximation spaces.
We construct respectively the operator used to interpolate the coarse correction from level $\ell+1$ to $\ell$, denoted by $\interpolate$, and the operator that projects the approximate solution at level $\ell$ to $\ell+1$, denoted by $\inject$.
Since the discrete nonlinear system \eqref{eq:nonlinear_system_level_l} involves two types of unknowns---flux and pressure---we look for $\interpolate$ and $\inject$ in a $2\times2$ block-diagonal form
\begin{equation}
\interpolate = \begin{bmatrix}
(\Mat{P}_\sigma)_{\ell+1}^\ell & \\ & (\Mat{P}_p)_{\ell+1}^\ell
\end{bmatrix} \qquad \text{ and } \qquad \inject = \begin{bmatrix}
(\Mat{Q}_\sigma)_\ell^{\ell+1} & \\ & (\Mat{Q}_p)_\ell^{\ell+1}
\end{bmatrix},
\label{eq:prolongation_and_projection_operators}
\end{equation}
such that the columns of $\interpolate$ are linearly independent and $\inject\interpolate = \blkMat{I}^{\ell+1}$.
The construction of the operators \eqref{eq:prolongation_and_projection_operators}, reviewed in \ref{sec:construction_of_prolongation_and_projection}, only involves local computations.
In brief, the definition of the coarse vertex-based (pressure) degrees of freedom is first performed aggregate-by-aggregate using the low-energy eigenvectors of the local graph-Laplacian operator.
This step yields the prolongation operator $(\Mat{P}_p)_{\ell+1}^\ell$.
Then, we introduce the coarse edge-based (flux) degrees of freedom that are used to obtain $(\Mat{P}_\sigma)_{\ell+1}^\ell $ and a locally constructed projection $\inject$ that satisfies the commutativity property $(\Mat{D}^{\ell+1}) (\Mat{Q}_{\sigma})^{\ell+1}_{\ell} = (\Mat{Q}_p)^{\ell+1}_{\ell} (\Mat{D}^{\ell})$.
The matrix $\Mat{D}^{\ell+1}$ is defined variationally as the product $\left ((\Mat{P}_p)_{\ell+1}^{\ell}\right )^T \Mat{D}^{\ell} (\Mat{P}_\sigma)^\ell_{\ell+1}$.
This procedure can then be applied recursively to obtain nested approximation spaces with an increasing degree of coarsening.

\begin{rmk}[Abuse of terminology]

On level $\ell$, owing to the condition $\inject\interpolate = \blkMat{I}^{\ell+1}$, it is obvious that the operator $\blkMat{\Pi}^\ell:=\interpolate \inject$ is a projection because
\begin{equation}
(\blkMat{\Pi}^\ell)^2 = \blkMat{\Pi}^\ell.
\label{eq:projection_property}
\end{equation}
Since the columns of $\interpolate$ are linearly independent, the projection $\blkMat{\Pi}^\ell \blkVec{x}^\ell = \interpolate \inject\blkVec{x}^\ell$ of any vector $\blkVec{x}^\ell$ on level $\ell$ can be uniquely identified by the coefficient vector $\inject\blkVec{x}^\ell$ on level $\ell+1$. In essence, $\blkMat{\Pi}^\ell \blkVec{x}^\ell$ and $\inject\blkVec{x}^\ell$ are the representations of the coarse-space projection of $\blkVec{x}^\ell$ on level $\ell$ and $\ell+1$ respectively.
Hence, although $\inject$ does not satisfy the formal definition of a projection \eqref{eq:projection_property}, with an abuse of terminology, $\inject$ is also referred to as a ``projection" to the coarse space throughout the paper.

\end{rmk}

\begin{figure}
%  \hfill
%  \includegraphics[width=.35\linewidth]{./pics/aggregates_mesh}
%  \hfill
%  \includegraphics[width=.35\linewidth]{./pics/aggregates_graph}
%  \hfill\null
  \hfill
  \begin{subfigure}[c]{.35\linewidth}
    \centering
    \includegraphics[width=\linewidth]{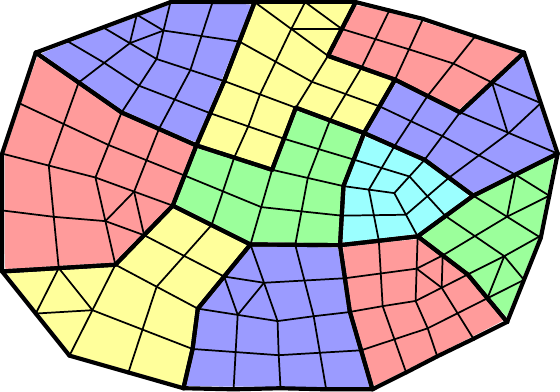}
    \caption{}
  \end{subfigure}
  \hfill
  \begin{subfigure}[c]{.35\linewidth}
    \centering
    \includegraphics[width=\linewidth]{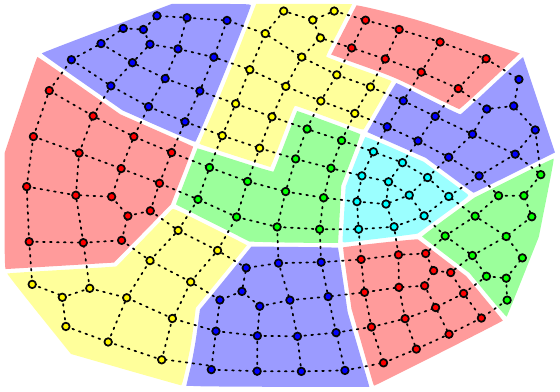}
    \caption{}
  \end{subfigure}
  \hfill\null
  \caption{\label{fig:aggregates-mesh-graph}Illustration of the partitioning methodology used to define the coarse space.
First, we form a graph whose vertices and edges correspond, respectively, to the cells and interfaces in the mesh.
We partition the graph into connected subsets of vertices defining, by duality, cell aggregates with non-planar interfaces.
Then, we use a local algebraic procedure based on the subsets of vertices to construct the coarse vertex-based and edge-based approximation spaces.
}
\end{figure}

\subsection{Formation of the discrete operator on each level}\label{sec:all-level-M}

Here, we discuss the construction of the discrete nonlinear operator $\blkMat{A}^{\ell}$ as a function of the solution iterate on the same level, $\blkVec{x}^{\ell}_k$.
This operator appears in the nonlinear problem \eqref{eq:nonlinear_system_level_l} and is written in the form
\begin{equation}
\blkMat{A}^{\ell}(\blkVec{x}^{\ell}_k) :=
 \begin{bmatrix}
    \Mat{M}^{\ell}(\widetilde{\Mat{\Pi}}^{\ell}\Vec{p}^{\ell}_k) & \left(\Mat{D}^{\ell}\right)^T\\
    \Mat{D}^{\ell} & 
  \end{bmatrix} \begin{bmatrix}
    \Vec{\sigma}^{\ell}_k \\ \Vec{p}^{\ell}_k
  \end{bmatrix},
  \label{eq:structure}
\end{equation} 
where $\widetilde{\Mat{\Pi}}^\ell$ is a projection from the pressure space on level $\ell$ to the space of piecewise-constant functions on the same level, with each constant representing the average pressure value in a given aggregate on that level. 
The algebraic definition of $\widetilde{\Mat{\Pi}}^{\ell}$ is given in \ref{sec:piecewise-constant-projection}.
The reason to include $\widetilde{\Mat{\Pi}}^{\ell}$ in the definition of $\blkMat{A}^{\ell}(\blkVec{x}^{\ell}_k)$ is that it allows us to directly evaluate $\blkMat{A}^{\ell}(\blkVec{x}^{\ell}_k)$ without visiting the finest level during the multigrid cycle.
As a result, we can substantially reduce the complexity of one multigrid cycle, at the cost of sacrificing some accuracy because $\widetilde{\Mat{\Pi}}^{\ell}\Vec{p}^{\ell}_k$ is just an approximation to $\Vec{p}^{\ell}_k$.
As far as scalability is concerned, the reduction in complexity will outweigh the relative loss of accuracy.
We remind the reader that the components of the fine-level operator, $\blkMat{A}^{0}(\blkVec{x}^{0}_k)$, have already been defined in \eqref{eq:FV_matrix}.
%
%Then, the discrete problem in level $\ell$ is to find the root of the residual equation 
%\begin{equation}
%r^\ell(x^\ell) := A^\ell(x^\ell) - b^\ell = 0.
%\label{eq:residual}
%\end{equation}
%
%In Section~\ref{sec:FV_tpfa}, the components of $A^0(x^0)$ have already been defined. For coarse levels $\ell\ge1$, conceptually, the discrete problem is formed by Galerkin projection of the problem on the finer level in the sense that,
Considering now a coarse level $\ell > 0$, the discrete nonlinear operator is conceptually obtained by applying a Galerkin projection on the finer level, $\ell-1$, in the sense that
\begin{equation}
 \begin{bmatrix}
    \Mat{M}^{\ell}(\widetilde{ \Mat{\Pi} }^{\ell}\Vec{p}^{\ell}_k) & \left(\Mat{D}^{\ell}\right)^T\\
    \Mat{D}^{\ell} & 
  \end{bmatrix} =
  \left(  \blkMat{P}_\ell^{\ell-1} \right)^T
  \begin{bmatrix}
    \Mat{M}^{\ell-1}(\widetilde{ \Mat{\Pi} }^\ell \Vec{p}^\ell_k ) & \left( \Mat{D}^{\ell-1}\right)^T\\
    \Mat{D}^{\ell-1} & 
  \end{bmatrix}
  \blkMat{P}_\ell^{\ell-1} .
  \label{eq:coarse-saddle}
\end{equation}
We note that \eqref{eq:coarse-saddle} does not provide a practical way to compute $\blkMat{A}^{\ell}$. 
Using \eqref{eq:coarse-saddle} to form $\blkMat{A}^{\ell}$ would require updating the finer-level operator $M^{\ell-1}$ with the coarse-level pressure iterate, $\Vec{p}^{\ell}_k$, at each iteration. 
If $\ell-1 \neq 0$, then this recursive updating procedure would be repeated until the finest level is reached. 
Therefore, the update of the discrete problems on all levels would involve fine-grid computations, which would significantly increase the cost of the algorithm.
For the multigrid solver to be efficient and scalable, it is crucial to minimize the calculations performed on the fine grid. 
Fortunately, $\Mat{M}^\ell(\widetilde{ \Mat{\Pi} }^\ell \Vec{p}^{\ell}_k)$ can be formed in a different way that does not require visiting the fine grid, as shown below. 
This is achieved by assembling the coarse system from coarse local transmissibility matrices that satisfy the decomposition given in the following proposition.

We adopt the indexing notation of \cite{ml-spectral-coarsening}, in which a cell at a coarse level, $\ell \geq 1$, corresponds to an aggregate, $\mathcal{A}^{\ell-1}$, formed by agglomerating finer cells at level $\ell-1$.
To simplify the notation, we omit the subscript $k$ denoting the nonlinear iteration in the remainder of this section. 
%
%Notice that during nonlinear iterations, once a new coarse-level iterate $\Vec{p}^{\ell}$ is obtained, the naive way of forming the coarse problem as suggested in \eqref{eq:coarse-saddle} requires first updating the finer level operator $M^{\ell-1}$. If $\ell-1$ is not the finest level, then the process continues until the finest level is reached. This means that the update of the discrete problems in all levels involves fine grid computation. For multigrid to be efficient and scalable, it is crucial to minimize calculation on the fine grid. Fortunately, $M^\ell(\tilde{\pi}^\ell(\Vec{p}_\ell))$ can actually be formed in a different way that do not need to visit the fine grid. This is achieved by assembling the coarse system from coarse ``element matrices", which satisfy a decomposition analog to \eqref{eq:fine-local-product-decomposition}.
\begin{proposition}\label{prop:assemble}
On each level $\ell \geq 0$, $\Mat{M}^{\ell}( \widetilde{ \Mat{\Pi}}^{\ell}\Vec{p}^{\ell})$ can be assembled aggregate-by-aggregate from local transmissibility matrices.
Considering a coarse level $\ell \geq 1$, a local transmissibility matrix can be decomposed as
\begin{equation}
\Mat{M}_{\mathcal{A}^{\ell-1}}^{\ell}(\widetilde{ \Mat{\Pi} }^{\ell} \Vec{p}^{\ell}) = \frac{1}{\kappa(p^{\ell}_{\mathcal{A}^{\ell-1}})} \Mat{M}_{\mathcal{A}^{\ell-1}}^{\ell},  
\label{eq:local-product-decomposition}
\end{equation}
where $\mathcal{A}^{\ell-1}$ is an aggregate of finer cells at level $\ell-1$, or equivalently, a coarse cell at level $\ell$.
In \eqref{eq:local-product-decomposition}, $\Mat{M}_{\mathcal{A}^{\ell-1}}^{\ell}$ is a pre-computed matrix defined on $\mathcal{A}^{\ell-1}$ that is independent of pressure, and $p^{\ell}_{\mathcal{A}^{\ell-1}} := (\widetilde{ \Mat{\Pi}}^{\ell} \Vec{p}^{\ell})|_{\mathcal{A}^{\ell-1}}$ is the average pressure value on $\mathcal{A}^{\ell-1}$.
A decomposition analogous to \eqref{eq:local-product-decomposition} exists at the finest level for a local transmissibility matrix defined on a single cell $\tau_{K} \in \mathcal{T}$.
\end{proposition}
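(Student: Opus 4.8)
The plan is to induct on the level $\ell$, using the one structural fact behind the whole construction: the pressure enters the (half-)transmissibilities only through the scalar $\kappa(\cdot)$, whereas all the intergrid operators appearing in the Galerkin identity \eqref{eq:coarse-saddle} are pressure-independent. For the base case $\ell=0$ one reads off \eqref{eq:FV_matrix}: $\widetilde{\Mat{\Pi}}^{0}$ is the identity (the fine pressure is already one constant per cell, so $p^{0}_{\tau_K}=p_K$), and for each $\tau_K\in\mathcal{T}$ the diagonal matrix $\Mat{M}^{0}_{\tau_K}$ with entries $1/\overline{\Upsilon}_{K,\varepsilon}$ over $\varepsilon\in\partial\tau_K$ is purely geometric, so $\Mat{M}^{0}_{\tau_K}(\Vec{p}^{0})=\kappa(p_K)^{-1}\Mat{M}^{0}_{\tau_K}$ on interior and Dirichlet faces (a Neumann face carries a fixed entry, handled separately and irrelevant to the $\kappa$-factorization). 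Summing the zero-padded blocks over $\mathcal{T}$ recovers $\Mat{M}^{0}$, since an internal face $\varepsilon_{K,L}$ receives $\kappa(p_K)^{-1}/\overline{\Upsilon}_{K,\varepsilon}$ from $\tau_K$ and $\kappa(p_L)^{-1}/\overline{\Upsilon}_{L,\varepsilon}$ from $\tau_L$ --- exactly the entry in \eqref{eq:FV_matrix}. This is the finest-level decomposition promised in the last sentence of the statement.

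For the inductive step, assume the claim at level $\ell-1$ in the slightly stronger form: for \emph{any} pressure constant on each level-$(\ell-1)$ cell, $\Mat{M}^{\ell-1}$ assembles aggregate-by-aggregate with local blocks $\kappa(\text{cell value})^{-1}\Mat{M}^{\ell-1}_{\mathcal{A}^{\ell-2}}$, the second factor independent of pressure. Now apply \eqref{eq:coarse-saddle}. The argument $\widetilde{\Mat{\Pi}}^{\ell}\Vec{p}^{\ell}$ fed to $\Mat{M}^{\ell-1}$ there is constant on each level-$\ell$ cell $\mathcal{A}^{\ell-1}$, hence on every level-$(\ell-1)$ cell $\mathcal{A}^{\ell-2}\subseteq\mathcal{A}^{\ell-1}$ it takes the single value $p^{\ell}_{\mathcal{A}^{\ell-1}}$. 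Substitute the inductive decomposition into the middle factor of \eqref{eq:coarse-saddle}, group the level-$(\ell-1)$ local contributions by the level-$\ell$ cell that contains them, and factor $\kappa(p^{\ell}_{\mathcal{A}^{\ell-1}})^{-1}$ out of each group; what remains is $\Mat{M}^{\ell}_{\mathcal{A}^{\ell-1}}$, the sum of the corresponding $\blkMat{P}_\ell^{\ell-1}$-sandwiched level-$(\ell-1)$ blocks, manifestly independent of $\Vec{p}^{\ell}$. This gives \eqref{eq:local-product-decomposition}.

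The step I expect to be the real obstacle is justifying that this regrouping is legitimate, i.e. that \eqref{eq:coarse-saddle}, which is \emph{a priori} a global triple product, genuinely assembles ``aggregate-by-aggregate'': one must show that the $\blkMat{P}_\ell^{\ell-1}$-sandwiched contributions of the fine cells inside $\mathcal{A}^{\ell-1}$ are supported only on the coarse flux degrees of freedom carried by $\mathcal{A}^{\ell-1}$ and its coarse interfaces, so that the local blocks of distinct coarse cells do not overlap spuriously. This needs the sparsity structure of the flux prolongation $(\Mat{P}_\sigma)_{\ell+1}^\ell$ built in \ref{sec:construction_of_prolongation_and_projection}: each coarse flux basis function is supported on the fine fluxes of a single coarse interface together with fine fluxes interior to the (at most two) aggregates meeting along it. Granting this, a fine flux interior to $\mathcal{A}^{\ell-1}$ is reachable only by coarse flux DOFs of $\mathcal{A}^{\ell-1}$'s own coarse interfaces, and a fine flux lying on a coarse interface of $\mathcal{A}^{\ell-1}$ only by that interface's DOFs, so the coarse-cell/coarse-interface incidence is respected and $\Mat{M}^{\ell}_{\mathcal{A}^{\ell-1}}$ is well defined; the $\kappa$-factorization itself is then immediate, using only that $\widetilde{\Mat{\Pi}}^{\ell}$ collapses the (possibly spectrally enriched) coarse pressure to one value per coarse cell.
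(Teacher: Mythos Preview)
Your argument is correct and follows essentially the same route as the paper: induction on the level, the base case read off from the fine-level half-transmissibilities, and the inductive step using that $\widetilde{\Mat{\Pi}}^{\ell}\Vec{p}^{\ell}$ is constant across all sub-aggregates of a level-$\ell$ cell so that the common factor $\kappa(p^{\ell}_{\mathcal{A}^{\ell-1}})^{-1}$ pulls out of the local Galerkin product. The paper handles what you flag as the ``real obstacle'' by simply \emph{defining} the coarse local block via the restricted prolongation $(\Mat{P}_{\sigma,\,\mathcal{A}^{\ell}})_{\ell+1}^{\ell}$ and then asserting at the end that assembling these local blocks with the global-to-local map reproduces the global Galerkin projection; your support-structure discussion of $(\Mat{P}_\sigma)_{\ell+1}^{\ell}$ is exactly the justification behind that assertion.
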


Before discussing the proof of Proposition~\ref{prop:assemble} for the coarse levels, we focus on the case of the finest level.
%
%Using the one-sided transmissibility defined in \eqref{eq:half-transmissibility}, we can construct a local local transmissibility matrix (analog to an ``element matrix'' in finite element terminology) and then assemble the global system based on a local-to-global map for the flux degrees of freedom.
%
Let us consider a fine cell $\tau_K \in \mathcal{T}$.
We remind the reader that on the fine level, $\widetilde{ \Mat{\Pi} }^0$ is the identity map.
The fine local transmissibility matrix in $\tau_K$ is defined as a product between the scalar $1/\kappa(p^0_{\tau_K})$ and a diagonal matrix that is independent of the local pressure, $p^0_{\tau_K}$:
\begin{equation}
  \Mat{M}^0_{\tau_K}(p^0_{\tau_K}) :=
  \frac{1}{\kappa(p^0_{\tau_K})}
  \textbf{diag} ( \overline{\Vec{\Upsilon}}_{K,inv} ).
\label{eq:fine-local-product-decomposition}
\end{equation}
Here, $\textbf{diag}( \overline{\Vec{\Upsilon}}_{K,inv} )$ is the diagonal matrix created from the entries of the argument vector
\begin{equation}
  \overline{\Vec{\Upsilon}}_{K,inv} = \bigg( \overline{\Upsilon}_{K,\varepsilon}^{\;-1} \bigg)_{\varepsilon \in \partial \tau_K}
\end{equation}
collecting the reciprocal of the geometric term of the one-sided transmissibility \eqref{eq:half-transmissibility} associated with interfaces belonging to $\tau_K$.
Now, let $\widehat{\Mat{M}}^0(\Vec{p}^0)$ be the block-diagonal matrix whose $K$th diagonal block is the local transmissibility matrix, $\Mat{M}^0_{\tau_K}(p^0_{\tau_K})$.
To assemble the global matrix, we introduce the Boolean rectangular sparse matrix $W_{\hat{\sigma}\sigma}^0$,   which maps global flux degrees of freedom into local ones \cite{Vas08}.
Then, the global matrix  $M^0(\Vec{p}^0)$ is obtained by assembling the local matrices as:
\begin{equation}
\Mat{M}^0(\Vec{p}) = (W_{\hat{\sigma}\sigma}^0)^T \widehat{\Mat{M}}^0(\Vec{p}^0) W_{\hat{\sigma}\sigma}^0.
\label{eq:assemble}
\end{equation}
This concludes the proof of Proposition~\ref{prop:assemble} on the fine level.

We prove Proposition~\ref{prop:assemble} for the coarse levels by induction on $\ell$.
Let us consider a coarse cell at level $\ell+1$ corresponding to a cell aggregate $\mathcal{A}^{\ell}$.
Briefly speaking, the key idea of the proof is to first use the induction hypothesis at level $\ell$ to construct the local transmissibility matrices in the finer cells of level $\ell$ that have been agglomerated to form $\mathcal{A}^{\ell}$.
After that, we coarsen locally to prove that the decomposition \eqref{eq:local-product-decomposition} can be obtained in the coarse cell at level $\ell+1$.
We note that this decomposition is possible because $\widetilde{\Mat{\Pi}}^{\ell+1}\Vec{p}^{\ell+1}$ is constant on $\mathcal{A}^{\ell}$.
Then, the global-to-local map for the flux degrees of freedom on level $\ell+1$, denoted by $W_{\hat{\sigma}\sigma}^{\ell+1}$, is used to assemble the global system.
The details of the proof are in \ref{sec:assemble}.

\begin{rmk}
Proposition~\ref{prop:assemble} plays a central role in the assembly of $\blkMat{A}^{\ell}$ on the coarse levels.
The local, static matrices $\Mat{M}^{\ell}_{\mathcal{A}^{\ell-1}}$ are pre-computed and stored when the multigrid hierarchy is constructed during the setup phase.
Then, at each nonlinear iteration of the solving phase, $\Mat{M}^{\ell}_{\mathcal{A}^{\ell-1}}( \widetilde{ \Mat{\Pi} }^{\ell}\Vec{p}^{\ell})$ is formed using \eqref{eq:local-product-decomposition}.
That is, we scale $\Mat{M}^{\ell}_{\mathcal{A}^{\ell-1}}$ with $1/\kappa( p^{\ell}_{\mathcal{A}^{\ell-1}})$ in each cell aggregate to construct $\Mat{M}^{\ell}_{\mathcal{A}^{\ell-1}}( \widetilde{ \Mat{\Pi} }^{\ell} \Vec{p}^{\ell}  )$, and then we assemble the local contribution into the global system using $W_{\hat{\sigma}\sigma}^{\ell}$.
With this procedure, the formation of $\Mat{M}^{\ell}( \widetilde{ \Mat{\Pi} }^{\ell} \Vec{p}^{\ell} )$ only involves operations on level $\ell$.
\end{rmk}

%% Based on Proposition~\ref{prop:assemble}, when constructing the multigrid hierarchy in the setup phase, we store the reference element matrices $M_{\tau_i}^{\ell}$. Then in the solving phase, $M^{\ell}(\Vec{p})$ is formed by first scaling $M_{\tau_i}^{\ell}$ with $(\kappa(p_i))^{-1}$ and then assembling the local contribution through the local to global map in level $\ell$. This way, the formation of $M^{\ell}(\Vec{p})$ involves operations in level $\ell$ only.

\subsection{Nonlinear smoothing}\label{sec:smoothing}

In Algorithm~\ref{alg:fas}, the nonlinear smoothing step (lines \ref{alg:line:nonlinPreSmoothing} and \ref{alg:line:nonlinPostSmoothing}), consists of the application of either one Picard or one Newton iteration to approximate the solution of the nonlinear problem \eqref{eq:nonlinear_system_level_l} on level $\ell$. 
Both approaches require linearizing \eqref{eq:nonlinear_system_level_l} at $\blkVec{x}^{\ell}_k$, solving the resulting linear system to compute a solution update, and then applying the backtracking line search to this update.
Since the discrete nonlinear operators on all levels share the structure of \eqref{eq:structure}, the smoothing step on all levels can be described in a unified way. For this reason, we drop the superscript $\ell$ on the operators in the following description of the smoothers.

\subsubsection{Picard iteration}

The Picard smoothing step starts with the assembly of the matrix
$\Mat{M}(\widetilde{\Mat{\Pi}} \Vec{p}_{k} )$ using the argument vector, $\blkVec{x}_k$.
This is done using the methodology described in Section~\ref{sec:all-level-M}.
Then, since the derivatives of the entries of $\Mat{M}( \widetilde{\Mat{\Pi} }\Vec{p}_{k} )$ with respect to pressure are neglected in the Picard linearization, we solve the linear system
\begin{equation}
  \begin{bmatrix}
    \Mat{M}(\widetilde{ \Mat{\Pi} }\Vec{p}_{k}) & \Mat{D}^T\\
    \Mat{D} & 0
  \end{bmatrix} 
  \begin{bmatrix}
    \Vec{\sigma}_P\\
    \Vec{p}_P
  \end{bmatrix} 
  = -
  \begin{bmatrix}
    \Vec{g}_k \\
    \Vec{f}_k
  \end{bmatrix},
  \label{eq:picardian_system}
\end{equation}
for the Picard solution, $\blkVec{x}_P := \begin{bmatrix} \Vec{\sigma}_P \\ \Vec{p}_P \end{bmatrix}$.
To conclude the smoothing step, we apply the backtracking line search of Algorithm~\ref{alg:backtracking} so that the Picard-based smoothing step returns $\blkVec{x}_k + s_k ( \blkVec{x}_P - \blkVec{x}_k )$.

\subsubsection{Newton iteration}

The Newton linearization at $\blkVec{x}_k$ yields the system
\begin{equation}
\blkMat{J}_k (\blkVec{x}_k) \Delta \blkVec{x}_k = - \blkVec{r}_k( \blkVec{x}_k ). \label{eq:jacobian_system}
\end{equation}
We solve \eqref{eq:jacobian_system} for the Newton update $\Delta \blkVec{x}_k := \begin{bmatrix}  \Delta \blkVec{\sigma}_k \\ \Delta \blkVec{p}_k \end{bmatrix}$. 
The Jacobian matrix, $\blkMat{J}_k$, is defined as
\begin{equation}
  \blkMat{J}_k (\blkVec{x}_k) := \partial_{\blkVec{x}} \blkVec{r}_k (\blkVec{x}_k) =
  \begin{bmatrix}
   \partial_{\Vec{\sigma}} (\Vec{r}_{\Vec{\sigma}})_k ( \Vec{\sigma}_{k}, \Vec{p}_{k}) & 
   \partial_{\Vec{p}} (\Vec{r}_{\Vec{\sigma}})_k ( \Vec{\sigma}_{k}, \Vec{p}_{k}) \\
   \partial_{\Vec{\sigma}} (\Vec{r}_{\Vec{p}})_k ( \Vec{\sigma}_{k}, \Vec{p}_{k}) &
   \partial_{\Vec{p}} (\Vec{r}_{\Vec{p}})_k ( \Vec{\sigma}_{k}, \Vec{p}_{k})
  \end{bmatrix}.
  \label{eq:jacobian_matrix}
\end{equation}
It is easy to see that 
\begin{equation}
  \partial_{\Vec{\sigma}} (\blkVec{r}_{\Vec{\sigma}})_k ( \Vec{\sigma}_{k}, \Vec{p}_{k}) =  \Mat{M} (\widetilde{\Mat{\Pi}}\Vec{p}_{k}), \;\; \partial_{\Vec{\sigma}} (\blkVec{r}_{\Vec{p}})_k ( \Vec{\sigma}_{k}, \Vec{p}_{k}) = D, \;\; \text{and} \;\; \partial_{\Vec{p}} (\blkVec{r}_{\Vec{p}})_k ( \Vec{\sigma}_{k}, \Vec{p}_{k}) = 0,
  \label{eq:easy_jacobian_matrix_terms}
\end{equation}
where $\Mat{M}( \widetilde{\Mat{\Pi}}\Vec{p}_k )$ is assembled as explained in Section~\ref{sec:all-level-M}.
The evaluation of $\partial_{\Vec{p}} (\Vec{r}_{\Vec{\sigma}})_k ( \Vec{\sigma}_{k}, \Vec{p}_{k})$ requires differentiating
\begin{equation}
  ( \Vec{r}_{\Vec{\sigma}} )_k = \Mat{M}( \widetilde{\Mat{\Pi}}\Vec{p}_k ) \Vec{\sigma}_k + \Mat{D}^T \Vec{p}_k + \Vec{g}_k
  \label{eq:flux_residual}
\end{equation}
with respect to $\Vec{p}_k$. 
In particular, we want to extend the methodology developed in Section~\ref{sec:all-level-M} to evaluate the derivatives of the first term in the right-hand side, $\Mat{M}( \widetilde{\Mat{\Pi}}\Vec{p}_k ) \Vec{\sigma}_k$, without performing operations on the finer levels.
Before demonstrating in Proposition~\ref{prop:Mpsigma} how this can be achieved, we introduce some useful notation.
We first introduce the vector $\Vec{\kappa}_{inv}( \widetilde{\Mat{\Pi}} \Vec{p}_k )$ containing as many entries as the number of cells on the level.
These entries are defined as
\begin{equation}
  [ \Vec{\kappa}_{inv}( \widetilde{\Mat{\Pi}}\Vec{p}_k ) ]_i
  := \frac{1}{\kappa( (\widetilde{\Mat{\Pi}} \Vec{p}_k )|_{\mathcal{A}_i} ) }.
  \label{eq:definition_inverse_kappa_vector}
\end{equation}
We remind the reader that $\widehat{\Mat{M}}$ is the block-diagonal matrix introduced in Proposition~\ref{prop:assemble} whose $i$th block is the static matrix $\Mat{M}_{\mathcal{A}_i}$ defined on cell aggregate $\mathcal{A}_i$.
%
%Reminding the reader that for a vector $\Vec{u}$, $\textbf{diag}( \Vec{u} )$ is the diagonal matrix containing the entries of $\Vec{u}$ on its diagonal, 
We can now discuss the following proposition stating that $\Mat{M}( \widetilde{ \Mat{\Pi} } \Vec{p}_k)\Vec{\sigma}_k$ is linear with respect to $\Vec{\kappa}_{inv}(\widetilde{\Mat{\Pi}}\Vec{p}_k)$.

%% The evaluation of $\partial_\Vec{p} R_\sigma( \bsigma_{k-1}, \Vec{p}_{k-1})$ is a bit more involved. To this end, we will make use of an alternative expression of $M(\widetilde{\pi}(\Vec{p}))\bsigma$. Before that, we need some more notation. Let
%% \begin{equation}
%% \kappa_i := \kappa(\tildem{\pi}(\Vec{p})|_{\tau_i})
%% \label{eq:ki}
%% \end{equation}
%% and let $\kappa(\tilde{\pi}(\Vec{p}))^{-1}$ be the vector of size number of elements (of a given level) such that its $i$-th entry is $\kappa(\tilde{\pi}(\Vec{p})|_{\tau_i})^{-1}$:
%% \begin{equation}
%% \kappa(\tilde{\pi}(\Vec{p}))^{-1} := \begin{bmatrix}
%% \vdots \\
%% (\kappa(\tilde{\pi}(\Vec{p})|_{\tau_i}))^{-1} \\
%% \vdots \\
%% \end{bmatrix} = \begin{bmatrix}
%% \vdots \\
%% \kappa_i^{-1} \\
%% \vdots \\
%% \end{bmatrix}.
%% \label{eq:kp}
%% \end{equation}
%% Moreover, let $\widehat{M}$ be the block diagonal matrix with the $i$-th diagonal block being $M_{\tau_i}$:
%% \begin{equation}
%% \widehat{M} = \begin{bmatrix}
%% \ddots & & \\
%% & M_{\tau_i} & \\
%% & & \ddots \\
%% \end{bmatrix}.
%% \label{eq:M-hat}
%% \end{equation}
%% \begin{definition}\label{def:diag}
%% Given a diagonal matrix $A$ and a vector $\blkVec{b}$, let
%% $\diag(A)$ and $\diag(\blkVec{b})$ respectively be the vector and the diagonal matrix such that
%% \[
%% \diag(A) = \begin{bmatrix}
%% \vdots \\
%% (A)_{kk} \\
%% \vdots \\
%% \end{bmatrix}
%% \quad \text{ and } \quad
%% \diag(\blkVec{b}) =
%% \begin{bmatrix}
%% \ddots & & \\
%% & (\blkVec{b})_k & \\
%% & & \ddots \\
%% \end{bmatrix}.
%% \]
%% \end{definition}

\begin{proposition}
\label{prop:Mpsigma}
The nonlinear term $\Mat{M} (\widetilde{ \Mat{\Pi} }\Vec{p}_k)\Vec{\sigma}_k$ appearing in \eqref{eq:flux_residual} can be decomposed into the matrix-vector product
\begin{equation}
\Mat{M}(\widetilde{ \Mat{\Pi} }\Vec{p}_k)\Vec{\sigma}_k = \Mat{N}(\Vec{\sigma}_k) \Vec{\kappa}_{inv}(\widetilde{ \Mat{\Pi} }\Vec{p}_k).
\label{eq:product-decomposition-to-differentiate}
\end{equation}
In \eqref{eq:product-decomposition-to-differentiate}, the flux-dependent matrix, $\Mat{N}(\Vec{\sigma}_k)$, is defined as
\begin{equation}
  \Mat{N}(\Vec{\sigma}_k) := \Mat{W}_{\hat{\sigma}\sigma}^T \widehat{\Mat{M}} \diag( \Mat{W}_{\hat{\sigma}\sigma} \Vec{\sigma}_k ) \Mat{W}_{\hat{\sigma} p}.
\label{eq:definition-N}
\end{equation}
where the Boolean matrices $\Mat{W}_{\hat{\sigma}\sigma}$ and $\Mat{W}_{\hat{\sigma}p}$ represent, respectively, the map from global flux degrees of freedom to local flux degrees of freedom introduced in the proof of Proposition~\ref{prop:assemble}, and the map from cells/aggregates to local flux degrees of freedom.
\end{proposition}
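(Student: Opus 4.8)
The plan is to reduce the claim to the aggregate-by-aggregate assembly already established in Proposition~\ref{prop:assemble} and then to exploit the fact that the only pressure dependence of $\Mat{M}(\widetilde{\Mat{\Pi}}\Vec{p}_k)$ enters through the scalar prefactors $1/\kappa$, which commute with matrix multiplication. First I would record the level-$\ell$ analogue of the assembly formula \eqref{eq:assemble}, namely $\Mat{M}(\widetilde{\Mat{\Pi}}\Vec{p}_k) = \Mat{W}_{\hat{\sigma}\sigma}^T \widehat{\Mat{M}}(\widetilde{\Mat{\Pi}}\Vec{p}_k) \Mat{W}_{\hat{\sigma}\sigma}$, where $\widehat{\Mat{M}}(\widetilde{\Mat{\Pi}}\Vec{p}_k)$ is block-diagonal with $i$th block $\kappa(p_{\mathcal{A}_i})^{-1} \Mat{M}_{\mathcal{A}_i}$ by the decomposition \eqref{eq:local-product-decomposition} (the base case $\ell = 0$ being furnished by \eqref{eq:fine-local-product-decomposition}, in which $\widetilde{\Mat{\Pi}}^0$ is the identity, the aggregates are single cells, and $\widehat{\Mat{M}}$ reduces to the cell-wise $\textbf{diag}(\overline{\Vec{\Upsilon}}_{K,inv})$). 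Consequently $\Mat{M}(\widetilde{\Mat{\Pi}}\Vec{p}_k)\Vec{\sigma}_k = \Mat{W}_{\hat{\sigma}\sigma}^T \widehat{\Mat{M}}(\widetilde{\Mat{\Pi}}\Vec{p}_k)(\Mat{W}_{\hat{\sigma}\sigma}\Vec{\sigma}_k)$, and it suffices to prove the identity $\widehat{\Mat{M}}(\widetilde{\Mat{\Pi}}\Vec{p}_k)\Mat{W}_{\hat{\sigma}\sigma}\Vec{\sigma}_k = \widehat{\Mat{M}}\,\diag(\Mat{W}_{\hat{\sigma}\sigma}\Vec{\sigma}_k)\,\Mat{W}_{\hat{\sigma} p}\,\Vec{\kappa}_{inv}(\widetilde{\Mat{\Pi}}\Vec{p}_k)$.

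That identity is checked one aggregate at a time. On the local flux degrees of freedom of an aggregate $\mathcal{A}_i$, the corresponding block of the left-hand side is $\kappa(p_{\mathcal{A}_i})^{-1}\Mat{M}_{\mathcal{A}_i}(\Mat{W}_{\hat{\sigma}\sigma}\Vec{\sigma}_k)|_{\mathcal{A}_i} = \Mat{M}_{\mathcal{A}_i}\big(\kappa(p_{\mathcal{A}_i})^{-1}(\Mat{W}_{\hat{\sigma}\sigma}\Vec{\sigma}_k)|_{\mathcal{A}_i}\big)$, where the scalar is pulled through $\Mat{M}_{\mathcal{A}_i}$ with no need for $\Mat{M}_{\mathcal{A}_i}$ to be diagonal (only the scalar factors out; on coarse levels the local blocks are full). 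On the right-hand side, by the definition of $\Mat{W}_{\hat{\sigma} p}$ as the cells/aggregates-to-local-flux incidence matrix and the definition \eqref{eq:definition_inverse_kappa_vector} of $\Vec{\kappa}_{inv}$, every entry of $\Mat{W}_{\hat{\sigma} p}\Vec{\kappa}_{inv}$ indexed by a local flux dof of $\mathcal{A}_i$ equals $\kappa(p_{\mathcal{A}_i})^{-1}$; multiplying by $\diag(\Mat{W}_{\hat{\sigma}\sigma}\Vec{\sigma}_k)$ therefore produces a vector whose restriction to $\mathcal{A}_i$ is exactly $\kappa(p_{\mathcal{A}_i})^{-1}(\Mat{W}_{\hat{\sigma}\sigma}\Vec{\sigma}_k)|_{\mathcal{A}_i}$, and applying $\widehat{\Mat{M}}$ block-by-block yields $\Mat{M}_{\mathcal{A}_i}\big(\kappa(p_{\mathcal{A}_i})^{-1}(\Mat{W}_{\hat{\sigma}\sigma}\Vec{\sigma}_k)|_{\mathcal{A}_i}\big)$, matching the left-hand side block for block.

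Pre-multiplying the resulting equality by $\Mat{W}_{\hat{\sigma}\sigma}^T$ gives $\Mat{M}(\widetilde{\Mat{\Pi}}\Vec{p}_k)\Vec{\sigma}_k = \Mat{W}_{\hat{\sigma}\sigma}^T \widehat{\Mat{M}}\,\diag(\Mat{W}_{\hat{\sigma}\sigma}\Vec{\sigma}_k)\,\Mat{W}_{\hat{\sigma} p}\,\Vec{\kappa}_{inv}(\widetilde{\Mat{\Pi}}\Vec{p}_k) = \Mat{N}(\Vec{\sigma}_k)\Vec{\kappa}_{inv}(\widetilde{\Mat{\Pi}}\Vec{p}_k)$ with $\Mat{N}$ as in \eqref{eq:definition-N}; linearity in $\Vec{\kappa}_{inv}$ is then immediate, since $\Mat{N}(\Vec{\sigma}_k)$ carries no pressure dependence. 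I expect the only delicate part to be the index bookkeeping for the Boolean maps --- in particular verifying that $\Mat{W}_{\hat{\sigma} p}$ is arranged so that $\diag(\Mat{W}_{\hat{\sigma}\sigma}\Vec{\sigma}_k)\Mat{W}_{\hat{\sigma} p}\Vec{\kappa}_{inv}$ genuinely realizes the per-aggregate scaling, together with a careful statement of the base case; the algebraic core is just the observation that a scalar commutes with the static local matrix and that the block-constant vector of those scalars can be encoded either as a per-block scaling or as a Hadamard product with $\Mat{W}_{\hat{\sigma} p}\Vec{\kappa}_{inv}$.
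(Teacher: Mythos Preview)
Your proposal is correct and follows essentially the same route as the paper: both start from the assembly formula $\Mat{M}(\widetilde{\Mat{\Pi}}\Vec{p}_k)=\Mat{W}_{\hat{\sigma}\sigma}^T\widehat{\Mat{M}}(\widetilde{\Mat{\Pi}}\Vec{p}_k)\Mat{W}_{\hat{\sigma}\sigma}$, use Proposition~\ref{prop:assemble} to peel off the per-aggregate scalar $1/\kappa(p_{\mathcal{A}_i})$, and then recognize that $\Mat{W}_{\hat{\sigma}p}\Vec{\kappa}_{inv}$ encodes exactly those scalars on the local flux dofs. The only cosmetic difference is that the paper packages the argument into two auxiliary lemmas and an explicit swap identity $\diag(\Vec{v})\Vec{u}=\diag(\Vec{u})\Vec{v}$, whereas you verify the same equality directly block by block.
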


The proof of Proposition~\ref{prop:Mpsigma} is in \ref{sec:Mpsigma}.
We remark that, unlike \eqref{eq:local-product-decomposition}, the decomposition \eqref{eq:product-decomposition-to-differentiate} is not directly used in the assembly of the Jacobian matrix.
Instead, the purpose of \eqref{eq:product-decomposition-to-differentiate} is to provide a practical way to differentiate $\Mat{M}(\widetilde{ \Mat{\Pi} }\Vec{p}_k)\Vec{\sigma}_k$ with respect to $\Vec{p}_k$. 
Next, we exploit the fact that $\Mat{M}(\widetilde{ \Mat{\Pi} }\Vec{p}_k)\Vec{\sigma}_k$ is linear with respect to $\Vec{\kappa}_{inv}(\widetilde{ \Mat{\Pi} }\Vec{p}_k)$ to obtain an expression for $\partial_{\Vec{p}} ( \Vec{r}_{\Vec{\sigma}} )_k (\Vec{\sigma}_{k}, \Vec{p}_{k})$.
%Proposition~\ref{prop:Mpsigma} suggests that $M(\widetilde{\pi}(\Vec{p}))\bsigma$ is linear with respect to $\kappa(\widetilde{\pi}(\Vec{p}))^{-1}$. Hence,
\begin{proposition}\label{prop:d_flux_residual_d_pk}
  Consider the discrete nonlinear residual, $(\Vec{r}_{\Vec{\sigma}})_k (\Vec{\sigma}_k, \Vec{p}_k)$, defined in \eqref{eq:flux_residual}. Differentiating $(\Vec{r}_{\Vec{\sigma}})_k$ with respect to $\Vec{p}_k$ yields
\begin{equation}
\partial_{\Vec{p}} ( \Vec{r}_{\Vec{\sigma}} )_k (\Vec{\sigma}_{k}, \Vec{p}_{k}) = \Mat{N}(\Vec{\sigma}_{k}) \left( \frac{ d (\Vec{\kappa}_{inv}(\Vec{p})) }{d\Vec{p}}|_{\Vec{p} = \widetilde{\Mat{\Pi}}\Vec{p}_{k}}\right)\widetilde{ \Mat{\Pi} } + \Mat{D}^T,
  \label{eq:d_flux_residual_d_pk}
\end{equation}
where $\Vec{\kappa}_{inv}(\widetilde{\Mat{\Pi}}\Vec{p}_{k})$ and $\Mat{N}(\Vec{\sigma}_k)$ are defined, respectively, in \eqref{eq:definition_inverse_kappa_vector} and \eqref{eq:definition-N}.
\end{proposition}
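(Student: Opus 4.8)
The plan is to differentiate the three summands of the flux residual \eqref{eq:flux_residual} one at a time, invoking Proposition~\ref{prop:Mpsigma} for the only genuinely nonlinear one. Writing $(\Vec{r}_{\Vec{\sigma}})_k = \Mat{M}(\widetilde{\Mat{\Pi}}\Vec{p}_k)\Vec{\sigma}_k + \Mat{D}^T\Vec{p}_k + \Vec{g}_k$, the vector $\Vec{g}_k$ is fixed right-hand-side data (the boundary values on the fine level, and the FAS source term on coarse levels), so $\partial_{\Vec{p}}\Vec{g}_k = 0$; the affine term $\Mat{D}^T\Vec{p}_k$ contributes exactly $\Mat{D}^T$; and throughout we keep the flux iterate $\Vec{\sigma}_k$ frozen, consistent with the block-partial-derivative layout of the Jacobian \eqref{eq:jacobian_matrix}. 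Hence everything reduces to computing $\partial_{\Vec{p}}\bigl[\Mat{M}(\widetilde{\Mat{\Pi}}\Vec{p}_k)\Vec{\sigma}_k\bigr]$.

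For that term I would first apply Proposition~\ref{prop:Mpsigma} to rewrite it as $\Mat{N}(\Vec{\sigma}_k)\,\Vec{\kappa}_{inv}(\widetilde{\Mat{\Pi}}\Vec{p}_k)$. By \eqref{eq:definition-N}, $\Mat{N}(\Vec{\sigma}_k)$ is built from $\Vec{\sigma}_k$ together with the static, pressure-independent objects $\widehat{\Mat{M}}$, $\Mat{W}_{\hat{\sigma}\sigma}$, and $\Mat{W}_{\hat{\sigma}p}$; since it carries no dependence on $\Vec{p}_k$, the product rule collapses and it remains only to differentiate the vector $\Vec{\kappa}_{inv}(\widetilde{\Mat{\Pi}}\Vec{p}_k)$. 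This vector is the composition of the fixed linear projection $\widetilde{\Mat{\Pi}}$ (see its algebraic definition in \ref{sec:piecewise-constant-projection}) with the entrywise nonlinearity $\Vec{p}\mapsto\Vec{\kappa}_{inv}(\Vec{p})$ of \eqref{eq:definition_inverse_kappa_vector}, so the chain rule yields
\[
\partial_{\Vec{p}}\bigl[\Vec{\kappa}_{inv}(\widetilde{\Mat{\Pi}}\Vec{p}_k)\bigr] = \left(\frac{d\Vec{\kappa}_{inv}(\Vec{p})}{d\Vec{p}}\bigg|_{\Vec{p}=\widetilde{\Mat{\Pi}}\Vec{p}_k}\right)\widetilde{\Mat{\Pi}}.
\]

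I would also record that $\frac{d\Vec{\kappa}_{inv}}{d\Vec{p}}$ is diagonal: because $[\Vec{\kappa}_{inv}(\Vec{p})]_i$ depends only on the $i$th aggregate average $\Vec{p}|_{\mathcal{A}_i}$, its $(i,j)$ entry is $-\kappa'(\Vec{p}|_{\mathcal{A}_i})\,\kappa(\Vec{p}|_{\mathcal{A}_i})^{-2}\,\delta_{ij}$. Collecting the contributions of the three summands then gives \eqref{eq:d_flux_residual_d_pk}. I do not anticipate a real obstacle here: the argument is a bookkeeping-level chain-rule computation whose only delicate points are (i) that $\partial_{\Vec{p}}$ is a partial derivative taken with $\Vec{\sigma}_k$ held fixed, and (ii) that the chain rule must leave $\widetilde{\Mat{\Pi}}$ standing on the right of the diagonal derivative factor rather than absorbing it --- which is precisely where it appears in \eqref{eq:d_flux_residual_d_pk}.
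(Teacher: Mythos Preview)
Your proposal is correct and follows essentially the same approach as the paper: both use Proposition~\ref{prop:Mpsigma} to isolate the pressure dependence in $\Vec{\kappa}_{inv}(\widetilde{\Mat{\Pi}}\Vec{p}_k)$, note that $\Mat{N}(\Vec{\sigma}_k)$ is pressure-independent, and then apply the chain rule through the linear map $\widetilde{\Mat{\Pi}}$. The paper phrases the chain-rule step via an explicit directional-derivative limit, whereas you invoke it directly, but the argument is the same; your added remark on the diagonality of $d\Vec{\kappa}_{inv}/d\Vec{p}$ is correct and is simply extra detail beyond what the paper records.
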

\begin{proof}
To show that \eqref{eq:d_flux_residual_d_pk} holds, we first write that, by definition, $\partial_{\Vec{p}} ( \Vec{r}_{\Vec{\sigma}} )_k (\Vec{\sigma}_{k}, \Vec{p}_{k}) \Delta \Vec{p}_k$ is
\begin{equation}
\partial_{\Vec{p}} ( \Vec{r}_{\Vec{\sigma}} )_k (\Vec{\sigma}_{k}, \Vec{p}_{k}) \Delta \Vec{p}_k
=
\lim_{\epsilon\to 0} \frac{1}{\epsilon} \big( M(\widetilde{ \Mat{\Pi} }(\Vec{p}_{k}+\epsilon \Delta \Vec{p}_k))\Vec{\sigma}_{k}-M(\widetilde{ \Mat{\Pi} }\Vec{p}_{k})\Vec{\sigma}_{k} \big) + \Mat{D}^T\Delta \Vec{p}_k.
\end{equation}
We now use the decomposition \eqref{eq:product-decomposition-to-differentiate} from Proposition~\ref{prop:Mpsigma}, and the fact that $\Mat{N}(\Vec{\sigma}_k)$ is independent of pressure, to obtain
\begin{equation}
\partial_{\Vec{p}} ( \Vec{r}_{\Vec{\sigma}} )_k (\Vec{\sigma}_{k}, \Vec{p}_{k}) \Delta \Vec{p}_k
=
N( \Vec{\sigma}_{k}) \lim_{\epsilon\to 0} \frac{1}{\epsilon} \big( \Vec{\kappa}_{inv} (\widetilde{ \Mat{\Pi} } \Vec{p}_{k} + \epsilon \widetilde{ \Mat{\Pi} }\Delta \Vec{p}_k) - \Vec{\kappa}_{inv}(\widetilde{ \Mat{\Pi} }\Vec{p}_{k}) \big) + D^T \Delta \Vec{p}_k,
\label{eq:last_equation_proof_d_flux_residual_d_pk}
\end{equation}
where we have used the fact that $\widetilde{ \Mat{\Pi} }$ is linear.
Since the limit of the vector function in the right-hand side of \eqref{eq:last_equation_proof_d_flux_residual_d_pk} is equal, by definition, to the Jacobian of $\Vec{\kappa}_{inv}( \cdot )$ evaluated at $\widetilde{ \Mat{\Pi} }(\Vec{p}_k)$, we obtain \eqref{eq:d_flux_residual_d_pk}.
\end{proof}

This completes the description of the Jacobian assembly.
To summarize the Newton iteration, we use \eqref{eq:easy_jacobian_matrix_terms} and \eqref{eq:d_flux_residual_d_pk} to construct the four sub-blocks of $\blkMat{J}_k$.
Then, we solve the Jacobian linear system \eqref{eq:jacobian_system} for $\Delta \Vec{x}_k$.
After that, we apply the backtracking line search procedure of Algorithm \ref{alg:backtracking} to this Newton update.
The Newton-type smoothing step then returns $\blkVec{x}_k + s_k \Delta \blkVec{x}_k$.

\subsection{Solvers for the linearized problems}\label{sec:solvers-for-linearized-problems}

For both the Picard and Newton linearizations, the linear systems \eqref{eq:picardian_system} and \eqref{eq:jacobian_system} may be written in the form
\begin{equation}
  \begin{bmatrix}
    \Mat{M} & \Mat{E}\\
    \Mat{D} & 0
  \end{bmatrix}
  \begin{bmatrix}
    \Vec{\sigma} \\
    \Vec{p}
  \end{bmatrix}
  = -
  \begin{bmatrix}
    \Vec{g}\\
    \Vec{f}
  \end{bmatrix}.
  \label{eq:general-block-system}
\end{equation}
Here, $\Mat{E} := \Mat{D}^T$ in the Picard iterations while $\Mat{E} := \partial_{\Vec{p}} \Vec{r}_\sigma$ in the Newton iterations.
We consider two approaches to solve the $2\times2$ block-system \eqref{eq:general-block-system}, namely block-diagonal preconditioning and algebraic hybridization.

In the first approach, we solve the indefinite block system directly by a Krylov-subspace iterative method accelerated by a block-diagonal preconditioner. 
In the Picard iterations, \eqref{eq:general-block-system} is solved by the minimal residual method (MINRES) because the Jacobian matrix is symmetric. 
The generalized minimal residual method (GMRES) is used in the Newton iterations since the Jacobian system arising in Newton's method is not symmetric. 
The block-diagonal preconditioner \citep{block-diagonal, benzi05} is defined as
\begin{equation}
 \mathcal{B} =
 \begin{bmatrix}
    \mathcal{B}_M & 0\\
    0 & \mathcal{B}_S
 \end{bmatrix}.
\end{equation}
Here, $\mathcal{B}_M$ is the Jacobi smoother for $M$, and $\mathcal{B}_S$ is the BoomerAMG preconditioner \cite{henson02, hypre} for the approximate Schur complement $\Mat{S} := \Mat{D} \left( \text{diagm}(\Mat{M})\right)^{-1}\Mat{E}$, where $\text{diagm}(\Mat{M})$ is the diagonal matrix formed by extracting the diagonal entries of $\Mat{M}$.

In the second approach, the block system is solved by an algebraic variant of hybridization \citep{dobrev18}.
Hybridization is a classic approach for solving saddle-point systems arising from mixed finite-element discretizations; see, for example, \cite{arnold85}. 
In hybridization methods, the  continuity assumption on the approximation space of the flux unknown is removed. 
Instead, flux continuity is enforced weakly through a set of constraint equations. 
In this work, we use an algebraic variant of hybridization because the fine and coarse discrete systems considered here do not come from a finite-element discretization. 
In compact form, the structure of the hybridized system is
\begin{equation}
  \begin{bmatrix}
    \widehat{\Mat{M}} & \widehat{\Mat{E}} & \Mat{C}^T\\
    \widehat{\Mat{D}} & 0 & 0\\
    \Mat{C} & 0 & 0
  \end{bmatrix} 
  \begin{bmatrix}
    \widehat{\Vec{\sigma}} \\
    \Vec{p} \\
    \Vec{\lambda}
  \end{bmatrix} 
  = -
  \begin{bmatrix}
    \widehat{\Vec{g}}\\
    \Vec{f} \\
    0
  \end{bmatrix}.
  \label{eq:hybridized-block-system}
\end{equation}
Here, $\widehat{\Mat{M}}$, $\widehat{\Mat{D}}$, and $\widehat{\Mat{E}}$ are block-diagonal matrices because the flux continuity assumption has been removed.
The blocks in $\widehat{\Mat{M}}, \widehat{\Mat{D}}$, and $\widehat{\Mat{E}}$ are precisely the local matrices of $\Mat{M}$, $\Mat{D}$, and $\Mat{E}$, respectively.
The local matrices of $\Mat{M}$ and $\Mat{E}$ have been presented in Sections~\ref{sec:all-level-M} and \ref{sec:smoothing}.
In practice, instead of solving \eqref{eq:hybridized-block-system} directly, we consider the equivalent reduced system
\begin{equation}
\Mat{H} \Vec{\lambda} = \Vec{\xi},
\label{eq:hybridized-system}
\end{equation}
where
\begin{equation}
\Mat{H} := \begin{bmatrix}
    \Mat{C} & 0
  \end{bmatrix}
  \begin{bmatrix}
    \widehat{\Mat{M}} & \widehat{\Mat{E}} \\
    \widehat{\Mat{D}} & 0
  \end{bmatrix} ^{-1}
  \begin{bmatrix}
     \Mat{C}^T \\ 0
    \end{bmatrix},
    \quad \text{and} \quad 
    \Vec{\xi} := - \begin{bmatrix}
    \Mat{C} & 0
  \end{bmatrix}
  \begin{bmatrix}
    \widehat{\Mat{M}} & \widehat{\Mat{E}}\\
    \widehat{\Mat{D}} & 0
  \end{bmatrix} ^{-1}
  \begin{bmatrix}
     \widehat{\Vec{g}} \\
     \Vec{f}
    \end{bmatrix}.
\end{equation}
It is advantageous to solve \eqref{eq:hybridized-system} because the global system $\Mat{H}$ has a smaller size than \eqref{eq:hybridized-block-system}. Once \eqref{eq:hybridized-system} is solved, the unknowns $\widehat{\Vec{\sigma}}$ and $\Vec{p}$ are then recovered by local back substitution.
In the Picard iterations, $\Mat{H}$ is symmetric positive definite, and therefore \eqref{eq:hybridized-system} can be solved by the conjugate gradient method (CG).
In the Newton iterations, $\Mat{H}$ is non-symmetric, so the system is solved by GMRES. 
As shown in \cite{dobrev18} from an algebraic perspective, $\Mat{H}$ has a small near-nullspace, which motivates the use of  AMG is a good candidate for an effective preconditioner for $\Mat{H}$. 
This suggestion is supported by the numerical tests in \cite{lee17} and \cite{dobrev18}. 
Therefore, for both the Picard and Newton iterations, we use AMG as a preconditioner for the corresponding Krylov solvers. 
In particular, as in the block-diagonal preconditioner, we choose BoomerAMG in the numerical examples.

We have observed in our numerical tests that block-diagonal preconditioning is faster than algebraic hybridization on the fine-level systems, especially for 3D problems. %
In contrast, for the coarse-level systems algebraic hybridization exhibits a much faster convergence rate than block-diagonal preconditioning. 
Consequently, we apply the block-diagonal preconditioning approach to the fine-level systems, and the algebraic hybridization approach to the coarse-level systems.

\section{Numerical examples}\label{sec:numerics}

To demonstrate the relative performance of the proposed nonlinear multigrid solver compared to single level methods, we apply these solution strategies to solve three benchmark examples.
Even though the fine grids of some of these examples are structured, the coarse aggregates are unstructured and we do not actually exploit any structured information during coarsening.
Instead, the aggregates are formed using METIS, cf. Section~\ref{sec:coarsening}.
In the following sections, we report a coarsening factor between levels computed using the average aggregate size on each level.
FAS-Picard($m_{\mathcal{A}}$) refers to the full approximation scheme with one Picard iteration and $m_{\mathcal{A}}$ local spectral pressure degrees of freedom per aggregate---note that $m_{\mathcal{A}}$ is uniform in the domain.
We denote the number of flux degrees of freedom per coarse interface by $m_f$, and, unless specified otherwise, we set $m_f = 1$.
Nonlinear solver convergence is achieved either when the relative residual drops below $10^{-8}$, or when the absolute residual drops below $10^{-10}$.
On the coarsest level, we approximate the solution by using up to 10 nonlinear smoothing steps, unless convergence is achieved before this threshold.
In Algorithm~\ref{alg:backtracking}, we set the maximum number of backtracking steps to $n_{\max} = 4$, and we use $\theta = 0.9$.

The discrete problems are generated using our own implementation based on MFEM \cite{mfem} of the TPFA finite-volume scheme described in Section~\ref{sec:model}.
The multilevel spectral coarsening is performed with smoothG \cite{smoothg}, and the visualization is generated with GLVis \cite{glvis}.
All the experiments are executed on an Intel Core i7-4980HQ processor (clocked at 2.8 GHz) with 16 GB of memory.

\subsection{Example 1: mildly heterogeneous permeability field based on the Egg model}

The mesh and permeability field used in this example are taken from the Egg model \cite{EggModel}.
The Cartesian mesh contains 18,553 active cells, and the (uniform) size of each cell is 8 m $\times$ 8 m $\times$ 4 m.
To obtain the reference permeability, $\mathbb{K}_0$, we consider realization 27 in the Egg model data set \cite{EggModelData}, which represents a mildly heterogeneous and channelized permeability field.
Then, we scale the permeability field by a constant such that the maximum permeability value in the $x$ and $y$ directions (respectively, in the $z$ direction) is equal to one (respectively, to $10^{-2}$); see Fig.~\ref{fig:egg}
For the boundary conditions, we impose $p = 0$  on $\partial\Omega \cap \{y = 0 \}$, and a no-flow condition on the remainder of the domain boundary.
The diagonal coefficient is 
\begin{equation}
\mathbb{K}(p) =  \mathbb{K}_0 e^{\alpha p},
\label{eq:permeability_pressure_relationship_numerical_example_1}  
\end{equation}
and the non-uniform source term is given by $f(y) = 0.025 \max\{ e^{\frac{y - 432}{480}}, 1 \}$.
\def\scaletwo{.45}
\begin{figure}[h!]
\centering
\includegraphics[scale=0.25,clip,trim=5cm 5cm 9cm 9cm]{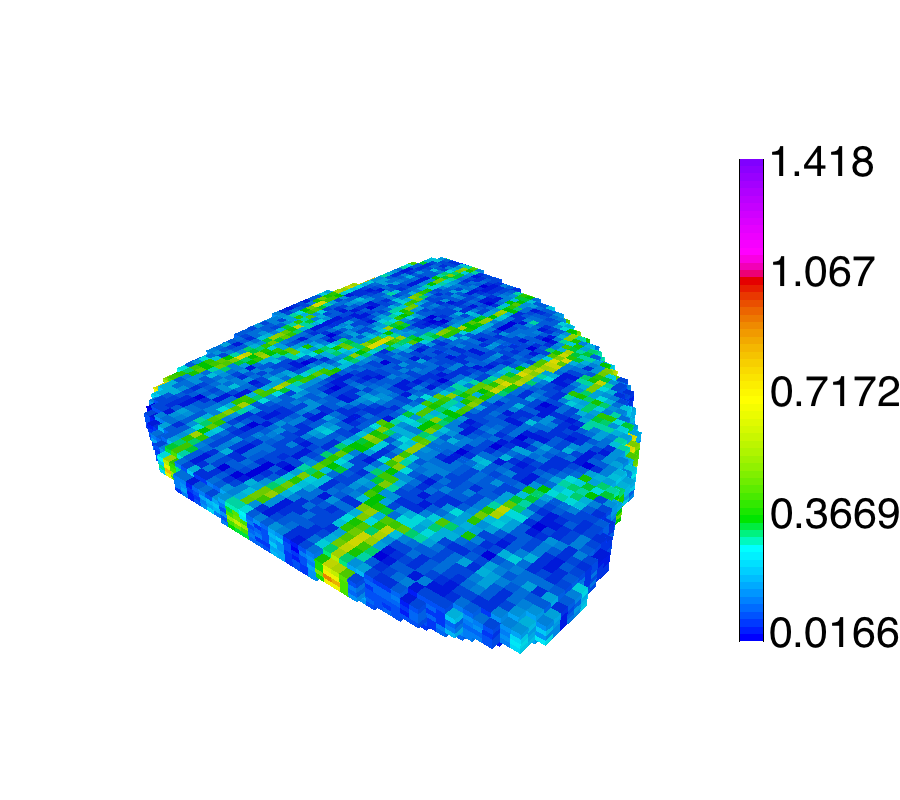}
\includegraphics[scale=0.2,clip,trim=23cm 5cm 0cm 4.6cm]{egg_scale.png}
\caption{Frobenius norm of the rescaled reference permeability, $\mathbb{K}_0$, in the Egg model (Example 1). The original channelized permeability field is obtained from realization 27 of \cite{EggModelData}.}
\label{fig:egg}
\end{figure}

The coarsening factor from the fine level to the first coarse level---i.e., from $\ell=0$ to $\ell=1$---is 28, while the coarsening factor from a coarse level ($\ell \geq 1$) to the next is 8.
We remark that the complexity of the coarse problem is heavily dependent on the stencil of the coarse flux degrees of freedom.
Since we have observed that, for this example, the coarse problem already represents a very good approximation to the original fine problem with one degree of freedom per coarse interface, we only consider the case $m_f = 1$ to minimize the complexity of the coarse problems.

In this example, the pressure-dependent multiplier appearing in \eqref{eq:permeability_pressure_relationship_numerical_example_1} is highly nonlinear.
As mentioned in Remark~\ref{rmk:special_backtracking}, we have implemented an additional backtracking algorithm designed to tackle the specific functional form of the permeability function.
More precisely, since the reference permeability coefficient, $\mathbb{K}_0$, is scaled by an exponential function of pressure, this additional backtracking aims at avoiding extremely large values in $\mathbb{K}$ that would undermine nonlinear convergence.
To achieve that, we limit the maximum change in pressure resulting from a smoothing step to $\log(1.5)/\alpha$.
That is, if $\max( \text{abs}( \Delta \Vec{p}_k ) )$---where the absolute value is applied component-wise to the entries of $\Delta \Vec{p}_k$---is larger than $\log(1.5)/\alpha$, then we scale $\Delta \Vec{x}_k$ by a constant so the new value of $\max( \text{abs}( \Delta \Vec{p}_k ) )$ is $\log(1.5)/\alpha$.
This specialized backtracking algorithm is applied at each nonlinear smoothing step, followed by the generic residual-based backtracking line-search of Algorithm~\ref{alg:backtracking}.
To perform a fair comparison, this two-step backtracking approach is used for the FAS solvers as well as for their single-level counterparts.

We are interested in comparing the performance of the nonlinear solvers as $\alpha$ is increased, since this parameter controls the nonlinearity of the problem.
To form a larger problem than the original setup described in \cite{EggModel}, we refine the original mesh in each direction.
On this refined mesh containing 148,428 active cells, we construct a four-level hierarchy.
We report the solution time and the nonlinear iteration counts in Table~\ref{tab:egg_alpha}.
Figure~\ref{fig:egg_alpha}(a) shows the solution times of single-level Picard and FAS-Picard, while Fig.~\ref{fig:egg_alpha}(b) focuses on single-level Newton and FAS-Newton.
In terms of nonlinear iteration count, Table~\ref{tab:egg_alpha} demonstrates that FAS-Picard and FAS-Newton remain very robust as the nonlinearity of the problem becomes more severe.
In terms of efficiency, Figs.~\ref{fig:egg_alpha}(a) and \ref{fig:egg_alpha}(b) clearly show that FAS-Picard and FAS-Newton achieve a better performance than the single-level solvers for all the considered values of $\alpha$.
We also note that the solution time reduction achieved with the FAS solvers becomes larger as $\alpha$ is increased---except for FAS-Picard($m_{\mathcal{A}}$ = 1); see Fig.~\ref{fig:egg_alpha}(a).

Considering now the impact of the enrichment of the coarse space on robustness and efficiency, we remark that increasing the number of spectral pressure degrees of freedom, $m_{\mathcal{A}}$, from 1 to 7, yields a reduction in the nonlinear iteration counts, and, for sufficiently large values of $\alpha$, a reduction in solution time.
This is particularly significant for FAS-Picard with $\alpha \geq 0.2$.
\begin{table}[h!]
  \centering
%\scalebox{.9}{
%\small
%\begin{tabular}{c||ccccccc}
%\hline
% $\alpha$ & 0.1 & 0.2 & 0.4 & 0.8  & 1.6 \\ \hline
% Single-level Picard & 9.55 (14) & 12.74 (17) & 15.46 (20) & 20.13 (26) & 26.19 (33) \\
% FAS-Picard ($m_\A=1$) & 8.63 (5) & 8.84 (5) & 12.01 (7*) & 16.97 (9) & 23.44 (13*) \\
% FAS-Picard ($m_\A=7$) & 6.90 (4*) & 8.34 (4) & 10.65 (5) & 13.05 (6) & 15.77 (7) \\
%%\hline
%\hline
% Single-level Newton & 6.86 (6) & 10.03 (8) & 12.01 (9) & 15.00 (11) & 18.42 (13) \\
% FAS-Newton ($m_\A=1$) & 4.05 (2) & 5.67 (3*) & 5.89 (3*) &  7.78 (4*) & 7.83 (4*) \\
% FAS-Newton ($m_\A=7$) & 4.53 (2) & 4.56 (2) & 4.63 (2) &  6.47 (3*) & 7.11 (3) \\
% \hline
%\end{tabular}}
%\\ \bigskip
  \small
  \begingroup
  \setlength{\tabcolsep}{6pt}
  \begin{tabular}{l
                  S[ table-figures-integer = 2,
                     table-figures-decimal = 2] 
                  S[ table-number-alignment = center,
                     table-figures-integer = 2,
                     table-figures-decimal = 0]
                  c
                  S[ table-figures-integer = 2,
                     table-figures-decimal = 2] 
                  S[ table-number-alignment = center,
                     table-figures-integer = 2,
                     table-figures-decimal = 0]
                  c
                  S[ table-figures-integer = 2,
                     table-figures-decimal = 2] 
                  S[ table-number-alignment = center,
                     table-figures-integer = 2,
                     table-figures-decimal = 0]
                  c
                  S[ table-figures-integer = 2,
                     table-figures-decimal = 2] 
                  S[ table-number-alignment = center,
                     table-figures-integer = 2,
                     table-figures-decimal = 0]
                  c
                  S[ table-figures-integer = 2,
                     table-figures-decimal = 2] 
                  S[ table-number-alignment = center,
                     table-figures-integer = 2,
                     table-figures-decimal = 0]
                 }           
    \toprule
    Solver
    & \multicolumn{2}{c}{$\alpha = 0.1$} && \multicolumn{2}{c}{$\alpha = 0.2$}
    && \multicolumn{2}{c}{$\alpha = 0.4$} && \multicolumn{2}{c}{$\alpha = 0.8$}
    && \multicolumn{2}{c}{$\alpha = 1.6$} \\
    \cmidrule{2-3} \cmidrule{5-6} \cmidrule{8-9} \cmidrule{11-12} \cmidrule{14-15}
    &  \multicolumn{1}{c}{$T_{\text{sol}}$} & \multicolumn{1}{c}{$n_{\text{it}}$}
    && \multicolumn{1}{c}{$T_{\text{sol}}$} & \multicolumn{1}{c}{$n_{\text{it}}$}
    && \multicolumn{1}{c}{$T_{\text{sol}}$} & \multicolumn{1}{c}{$n_{\text{it}}$}
    && \multicolumn{1}{c}{$T_{\text{sol}}$} & \multicolumn{1}{c}{$n_{\text{it}}$}
    && \multicolumn{1}{c}{$T_{\text{sol}}$} & \multicolumn{1}{c}{$n_{\text{it}}$} \\
    \midrule
    Single-level Picard    & 9.55 & 14     && 12.74 & 17       && 15.46 & 20       && 20.13 & 26       && 26.19 & 33       \\
    FAS-Picard ($m_\A=1$)  & 8.63 &  5     &&  8.84 &  5       && 12.01 &  7$^{*}$ && 16.97 &  9       && 23.44 & 13$^{*}$ \\
    FAS-Picard ($m_\A=7$)  & 6.90 &  4$^*$ &&  8.34 &  4       && 10.65 &  5       && 13.05 &  6       && 15.77 &  7       \\
    \midrule
    Single-level Newton    & 6.86 &  6     && 10.03 &  8       && 12.01 &  9       && 15.00 & 11       && 18.42 & 13       \\
    FAS-Newton ($m_\A=1$)  & 4.05 &  2     &&  5.67 &  3$^{*}$ &&  5.89 &  3$^{*}$ &&  7.78 &  4$^{*}$ && 7.83  &  4$^{*}$ \\
    FAS-Newton ($m_\A=7$)  & 4.53 &  2     &&  4.56 &  2       &&  4.63 &  2       &&  6.47 &  3$^{*}$ && 7.11  &  3       \\
    \bottomrule
  \end{tabular}
  \endgroup
  \caption{solution time ($T_\text{sol.}$) in seconds and nonlinear iteration count ($n_{\text{it.}}$) for the refined problem based on the Egg model (Example 1). The superscript $^*$ next to a nonlinear iteration count is used to indicate that, during the last nonlinear iteration, the V-cycle was terminated because convergence was achieved after the nonlinear pre-smoothing step. The mesh has 148,424 cells that we aggregate to construct a three-level hierarchy in the FAS solvers.} 
\label{tab:egg_alpha}
\end{table}
\pgfplotstableread
{
colnames		0		1		2		3		4
alpha		0.1 		0.2		0.4		0.8		1.6 
newton		6.86258	10.0257	12.0135	15.0034	18.416
fas_newton1	4.04871	5.67348	5.89212	7.77863	7.83071
fas_newton7	4.52615	4.56289	4.63055	6.4668	7.11059
picard		9.55366	12.7443	15.4459	20.1323	26.1932
fas_picard1	8.6277	8.84392	12.0103	16.9715	23.4442
fas_picard7	6.90058	8.34036	10.6486	13.0522	15.774
}\EggAlphaData
\pgfplotstabletranspose[colnames from=colnames]\EggAlphaData{\EggAlphaData}

\begin{figure}[h!]
  \centering
  \subcaptionbox{Nonlinear smoothing: Picard iteration}
  {
  \begin{tikzpicture}
    \begin{semilogxaxis}[
                  width=.5\textwidth,
                  height=.35\textwidth,
                  grid = major,
                  major grid style={very thin,draw=gray!30},
                  xmin=0,xmax=2,
                  ymin=0,ymax=30,
                  xlabel={$\alpha$},
                  ylabel={solution time (s)},
                  xtick=data,
                  ytick={0,5,...,30},
                  ylabel near ticks,
                  xlabel near ticks,
                  legend style={font=\small},
                  tick label style={font=\small},
                  xticklabels from table={\EggAlphaData}{alpha},
                  label style={font=\small},
                  legend entries={Single-level Picard, FAS-Picard($m_\A=1$), FAS-Picard($m_\A=7$)},
                  legend cell align={left},
                  legend pos=north west,
                  ]
      \addplot [mark=square, skyblue1, thick] table [x=alpha,y=picard] {\EggAlphaData};
      \addplot [mark=o, scarletred1, thick] table [x=alpha, y=fas_picard1] {\EggAlphaData};
      \addplot [mark=asterisk, chocolate1, thick] table [x=alpha, y=fas_picard7] {\EggAlphaData};
    \end{semilogxaxis}
  \end{tikzpicture}
  }
  \hspace{3mm}
  \subcaptionbox{Nonlinear smoothing: Newton iteration}
  {
  \begin{tikzpicture}
    \begin{semilogxaxis}[
                  width=.5\textwidth,
                  height=.35\textwidth,
                  grid = major,
                  major grid style={very thin,draw=gray!30},
                  xmin=0,xmax=2,
                  ymin=0,ymax=30,
                  xlabel={$\alpha$},
                  ylabel={solution time (s)},
                  xtick=data,
                  ytick={0,5,...,30},
                  ylabel near ticks,
                  xlabel near ticks,
                  legend style={font=\small},
                  tick label style={font=\small},
                  xticklabels from table={\EggAlphaData}{alpha},
                  label style={font=\small},
                  legend entries={Single-level Newton, FAS-Newton($m_\A=1$), FAS-Newton($m_\A=7$)},
                  legend cell align={left},
                  legend pos=north west,
                  ]
      \addplot [mark=square, skyblue1, thick] table [x=alpha,y=newton] {\EggAlphaData};
      \addplot [mark=o, scarletred1, thick] table [x=alpha, y=fas_newton1] {\EggAlphaData};
      \addplot [mark=asterisk, chocolate1, thick] table [x=alpha, y=fas_newton7] {\EggAlphaData};
    \end{semilogxaxis}
  \end{tikzpicture}
  }
  \caption{solution time in seconds as a function of the parameter, $\alpha$, controlling the nonlinearity of the problem. The mesh has 148,424 cells and is obtained by refining the original Egg model (Example 1). We construct a three-level hierarchy in the FAS solvers.}
  \label{fig:egg_alpha}
\end{figure}

Next, we assess the scalability of the nonlinear solvers with respect to problem size.
To this end, we uniformly refine the original mesh of \cite{EggModel} to generate two larger meshes.
The number of cells as well as the number of FAS levels for each mesh are reported in Table~\ref{tab:egg_levels}.
To make the problem highly nonlinear, we set $\alpha = 1.6$.
The solution times are shown in Fig.~\ref{fig:egg_refine}.
All solvers appear to be scalable in the sense that their solution time is proportional to the problem size.
FAS solvers in this case are also faster than the single level counterparts.

\begin{table}[h!]
  \centering
  \small
  \begin{tabular}{llll}
    \toprule
    Number of cells  & 18,553 & 148,424  & 1,187,392\\
    \midrule
    Number of levels & 3 & 4 & 5 \\
    \bottomrule
  \end{tabular}
  \caption{Number of cells and FAS levels for the three problems based on the Egg model (Example 1) used to assess solver scalability in Fig.~\ref{fig:egg_refine}.}
  \label{tab:egg_levels}
\end{table}

\pgfplotstableread
{
colnames		0		1		2
cells			18553	176792	1187392
cell_labels			18,553	176,792	1,187,392
linear		0.4875	4.6425	31.2
newton		1.9434	18.416	169.052
fas_newton1	0.98701	7.8307	71.7389
fas_newton7	0.91777	7.1106	62.3366	
picard      		3.73687	26.1932 	226.518
fas_picard1 	2.19723 	23.4442	183.597
fas_picard7	2.0329	15.774	140.43
}\EggRefineData
\pgfplotstabletranspose[colnames from=colnames]\EggRefineData{\EggRefineData}

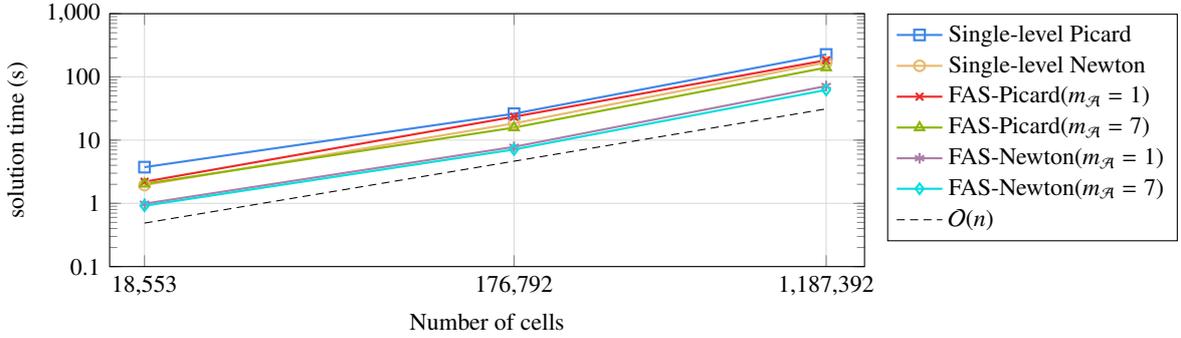
\begin{figure}[h!]
  \centering
  \begin{tikzpicture}
    \begin{loglogaxis}[
                  width=.7\textwidth,
                  height=.3\textwidth,
                  grid = major,
                  major grid style={very thin,draw=gray!30},
                  xmin=15000,xmax=1500000,
                  ymin=0.1,ymax=1000,
                  xlabel={Number of cells},
                  ylabel={solution time (s)},
                  xtick=data,
                  ytick distance=10^1,
                  ylabel near ticks,
                  xlabel near ticks,
                  legend style={font=\small},
                  tick label style={font=\small},
                  xticklabels from table={\EggRefineData}{cell_labels},
                  %xticklabel=\pgfmathprintnumber{\tick},
                  %xticklabel style={/pgf/number format/.cd,fixed,precision=5},
                  %xticklabel=\pgfkeys{/pgf/number format/.cd,fixed,precision=4,zerofill}\pgfmathprintnumber{\tick}x,
                  log ticks with fixed point,
                  label style={font=\small},
                  legend entries={Single-level Picard, Single-level Newton, FAS-Picard($m_{\mathcal{A}} = 1$), 
                  			  FAS-Picard($m_{\mathcal{A}} = 7$), FAS-Newton($m_{\mathcal{A}} = 1$), FAS-Newton($m_{\mathcal{A}} = 7$), $\mathcal{O}(n)$},
                  legend cell align={left},
                  legend pos=outer north east,
                  ]
      \addplot [mark=square, skyblue1, thick] table [x=cells, y=picard] {\EggRefineData};
      \addplot [mark=o, chocolate1, thick] table [x=cells, y=newton] {\EggRefineData};
      \addplot [mark=x, scarletred1, thick] table [x=cells, y=fas_picard1] {\EggRefineData};
      \addplot [mark=triangle, applegreen, thick] table [x=cells, y=fas_picard7] {\EggRefineData};
      \addplot [mark=asterisk, plum1, thick] table [x=cells, y=fas_newton1] {\EggRefineData};
      \addplot [mark=diamond, blue-green, thick] table [x=cells, y=fas_newton7] {\EggRefineData};
      \addplot [no marks, densely dashed] table [x=cells, y=linear] {\EggRefineData};
    \end{loglogaxis}
  \end{tikzpicture}
  \caption{solution time in seconds as a function of problem size for the three problems based on the Egg model (Example 1). We set $\alpha = 1.6$ to make the problem highly nonlinear. The number of levels in the FAS solvers is given in Table~\ref{tab:egg_levels}.}
  \label{fig:egg_refine}
\end{figure}

\subsection{Example 2: highly heterogeneous and anisotropic permeability field based on the SPE10 model}

We consider the Cartesian grid and permeability field from the SPE10 comparison solution project \cite{spe10}, which is characterized by its high heterogeneity and its anisotropy.
The domain, in meters, is $\Omega = (0, 365.76) \times (0, 670.56) \times (0, 51.816)$.
It is uniformly partitioned into 60 $\times$ 220 $\times$ 85 cells, so that the dimension of each cell is 6.096 m $\times$ 3.048 m $\times$ 0.6096 m.
The nonlinear permeability function $\mathbb{K}(p)$ is given in \eqref{eq:permeability_pressure_relationship_numerical_example_1}, and the original permeability field is rescaled using the same methodology as in Example 1; see Fig.~\ref{fig:spe10}.
The source term is given by $f(y) = 0.000025 \max\{ e^{\frac{y - 1980}{2200}}, 1 \}$.
We impose $p = 0$ on $\partial\Omega \cap \{y = 0 \}$, and a no-flow condition on the remainder of the boundary. 
\begin{figure}[t]
\centering
\includegraphics[scale=0.7]{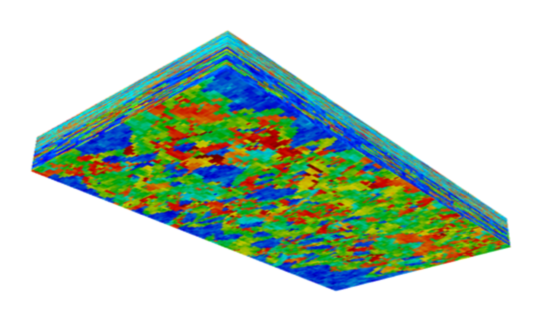}
\includegraphics[scale=0.2,clip,trim=25cm 4.6cm 0 4.6cm]{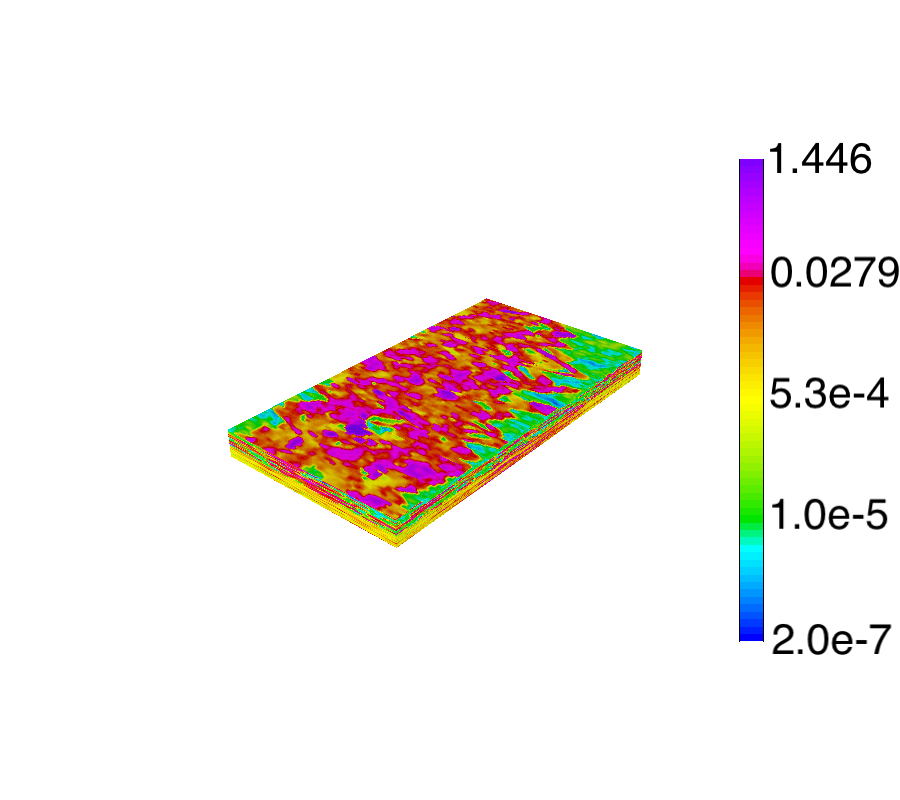}
\caption{Frobenius norm of the rescaled reference permeability, $\mathbb{K}_0$, in the SPE10 model (Example 2).}
\label{fig:spe10}
\end{figure}

The coarsening factor from the fine level to the first coarse level---i.e., from $\ell=0$ to $\ell=1$---is 256, while the coarsening factor from a coarse level ($\ell \geq 1$) to the next is 8. 
The FAS solvers rely on a three-level hierarchy of approximation spaces.
We use the same two-step backtracking line-search algorithm as in Example 1 with a maximum absolute change in pressure between two smoothing steps set to $\log(5)/\alpha$.

We note that compared the previous example, this is a much more difficult problem for the nonlinear solvers due to the higher heterogeneity of the permeability field.
For this reason, we only consider the Newton iteration for the smoothing steps of the single-level and multilevel solvers.
We have also observed that more flux degrees of freedom per interface are needed to improve the approximation property of the coarse space.
Since increasing the number of flux degrees of freedom per interface, $m_f$, results in a higher complexity of the coarse problem, we set the number of pressure degrees of freedom per aggregate to one ($m_{\mathcal{A}} = 1$).
We have found that, for this example, this yields better results than increasing $m_{\mathcal{A}}$ and setting $m_f = 1$ as in Example 1.

As in Example 1, we want to demonstrate the robustness and efficiency of the FAS solvers as nonlinearity becomes more severe.
This is done by gradually increasing the parameter, $\alpha$, controlling the nonlinearity of the permeability function \eqref{eq:permeability_pressure_relationship_numerical_example_1}.
In Table~\ref{tab:spe_alpha}, the number of nonlinear iterations as well as the solution time are reported.
We see that for single-level Newton, both the nonlinear iteration count and the solution time double when $\alpha$ is increased from 0.1 to 1.6.
Instead, the FAS-Newton solvers remain robust when nonlinearity becomes stronger.
For instance, when $\alpha$ goes from 0.1 to 1.6, the number of nonlinear iterations performed by FAS-Newton($m_f = 1$) only increases by one, and its solution time by about 54\%.
We also see that FAS-Newton($m_f = 3$) is even less sensitive to $\alpha$, as the instability that results in relatively large solution time increase for $\alpha = 0.4$ with FAS-Newton($m_f = 1$) vanishes with FAS-Newton($m_f = 3$).

\begin{table}[h!]
  \centering
%\scalebox{.9}{
%\begin{tabular}{c||ccccccc}
%\hline
% $\alpha$ & 0.1 & 0.2 & 0.4 & 0.8  & 1.6 \\ \hline
% Single-level Newton & 372.94 (20) & 487.05 (25) & 550.94 (30) & 643.57 (35) & 746.77 (41) \\
% FAS-Newton ($m_f=1$) & 120.60 (4*) & 128.59 (4) & 336.89 (7*) & 152.82 (5*) & 185.68 (5) \\
% FAS-Newton ($m_f=3$) & 119.20 (3) & 139.59 (4*) & 157.82 (4) & 164.10 (5*) & 179.90 (5) \\
%\hline
%\end{tabular}}
%\\
  \small
  \begingroup
  \setlength{\tabcolsep}{6pt}
  \begin{tabular}{l
                  S[ table-figures-integer = 2,
                     table-figures-decimal = 2] 
                  S[ table-number-alignment = center,
                     table-figures-integer = 2,
                     table-figures-decimal = 0]
                  c
                  S[ table-figures-integer = 2,
                     table-figures-decimal = 2] 
                  S[ table-number-alignment = center,
                     table-figures-integer = 2,
                     table-figures-decimal = 0]
                  c
                  S[ table-figures-integer = 2,
                     table-figures-decimal = 2] 
                  S[ table-number-alignment = center,
                     table-figures-integer = 2,
                     table-figures-decimal = 0]
                  c
                  S[ table-figures-integer = 2,
                     table-figures-decimal = 2] 
                  S[ table-number-alignment = center,
                     table-figures-integer = 2,
                     table-figures-decimal = 0]
                  c
                  S[ table-figures-integer = 2,
                     table-figures-decimal = 2] 
                  S[ table-number-alignment = center,
                     table-figures-integer = 2,
                     table-figures-decimal = 0]
                 }           
    \toprule
    Solver
    & \multicolumn{2}{c}{$\alpha = 0.1$} && \multicolumn{2}{c}{$\alpha = 0.2$}
    && \multicolumn{2}{c}{$\alpha = 0.4$} && \multicolumn{2}{c}{$\alpha = 0.8$}
    && \multicolumn{2}{c}{$\alpha = 1.6$} \\
    \cmidrule{2-3} \cmidrule{5-6} \cmidrule{8-9} \cmidrule{11-12} \cmidrule{14-15}
    &  \multicolumn{1}{c}{$T_{\text{sol}}$} & \multicolumn{1}{c}{$n_{\text{it}}$}
    && \multicolumn{1}{c}{$T_{\text{sol}}$} & \multicolumn{1}{c}{$n_{\text{it}}$}
    && \multicolumn{1}{c}{$T_{\text{sol}}$} & \multicolumn{1}{c}{$n_{\text{it}}$}
    && \multicolumn{1}{c}{$T_{\text{sol}}$} & \multicolumn{1}{c}{$n_{\text{it}}$}
    && \multicolumn{1}{c}{$T_{\text{sol}}$} & \multicolumn{1}{c}{$n_{\text{it}}$} \\
    \midrule
    Single-level Newton    & 372.94 & 20       && 487.05 & 25       && 550.94 & 30       && 643.57 & 35       && 746.77 & 41 \\
    FAS-Newton ($m_\A=1$)  & 120.60 &  4$^{*}$ && 128.59 &  4       && 336.89 &  7$^{*}$ && 152.89 &  5$^{*}$ && 185.68 &  5 \\
    FAS-Newton ($m_\A=3$)  & 119.20 &  3       && 139.59 &  4$^{*}$ && 157.82 &  4       && 164.10 &  5$^{*}$ && 179.90 &  5 \\
    \bottomrule
  \end{tabular}
  \endgroup
  \caption{Solution time ($T_\text{sol.}$) in seconds and nonlinear iteration count ($n_{\text{it.}}$) for the SPE10 model (Example 2). The superscript $^*$ next to a nonlinear iteration count is used to indicate that, during the last nonlinear iteration, the V-cycle was terminated because convergence was achieved after the nonlinear pre-smoothing step. We aggregate the cells of the fine mesh to construct a three-level hierarchy in the FAS solvers.}
\label{tab:spe_alpha}
\end{table}

The high nonlinearity of this problem also led us to assess the robustness of a nonlinear cascadic multigrid based on spectral coarsening, as an alternative to the V-cycle of the full approximation scheme.
In the nonlinear cascadic algorithm, the nonlinear iteration starts at the coarsest level.
The coarsest problem is solved nonlinearly until convergence, and its solution is used as an initial guess to solve the finer-level problem.
This process is repeated until the finest problem ($\ell = 0$) is solved.
In Table~\ref{tab:spe10_FMG}, we report the number of Newton iterations on each level.
We observe that while the number of nonlinear iterations on the coarsest level increases as $\alpha$ increases, the nonlinear iteration counts on the other levels---and importantly, on the finest level---remain very stable.

\begin{table}
\centering
%\scalebox{.9}{
%\begin{tabular}{|c||c|c|c|c|c|}
%\hline
%$\alpha$ &0.1 & 0.2 & 0.4 & 0.8 & 1.6  \\ \hline
%\diagbox[width=15em]{Solver}{Level} & (2, 1, 0) & (2, 1, 0) & (2, 1, 0) & (2, 1, 0) & (2, 1, 0) \\ \hline
%Single-level Newton & (0, 0, 20)  & (0, 0, 25)  & (0, 0, 30)  & (0, 0, 35)  & (0, 0, 41) \\
%Cascadic multigrid ($m_f=1$) & (14, 5, 5) & (16, 6, 6) & (20, 5, 6) & (23, 5, 6) & (26, 6, 6)   \\
%Cascadic multigrid ($m_f=3$) & (12, 5, 6) & (15, 6, 6) & (17, 6, 6) & (21, 5, 7) & (24, 5, 6)  \\
%\hline
%\end{tabular}}
%\\
  \small
  \begingroup
  \setlength{\tabcolsep}{5pt}
  \begin{tabular}{l
                  S[ table-number-alignment = center,
                     table-figures-integer = 2,
                     table-figures-decimal = 0] 
                  S[ table-number-alignment = center,
                     table-figures-integer = 2,
                     table-figures-decimal = 0] 
                  S[ table-number-alignment = center,
                     table-figures-integer = 2,
                     table-figures-decimal = 0]
                  c
                  S[ table-number-alignment = center,
                     table-figures-integer = 2,
                     table-figures-decimal = 0] 
                  S[ table-number-alignment = center,
                     table-figures-integer = 2,
                     table-figures-decimal = 0] 
                  S[ table-number-alignment = center,
                     table-figures-integer = 2,
                     table-figures-decimal = 0] 
                  c
                  S[ table-number-alignment = center,
                     table-figures-integer = 2,
                     table-figures-decimal = 0] 
                  S[ table-number-alignment = center,
                     table-figures-integer = 2,
                     table-figures-decimal = 0] 
                  S[ table-number-alignment = center,
                     table-figures-integer = 2,
                     table-figures-decimal = 0] 
                  c
                  S[ table-number-alignment = center,
                     table-figures-integer = 2,
                     table-figures-decimal = 0] 
                  S[ table-number-alignment = center,
                     table-figures-integer = 2,
                     table-figures-decimal = 0] 
                  S[ table-number-alignment = center,
                     table-figures-integer = 2,
                     table-figures-decimal = 0] 
                  c
                  S[ table-number-alignment = center,
                     table-figures-integer = 2,
                     table-figures-decimal = 0] 
                  S[ table-number-alignment = center,
                     table-figures-integer = 2,
                     table-figures-decimal = 0] 
                  S[ table-number-alignment = center,
                     table-figures-integer = 2,
                     table-figures-decimal = 0] 
                 }           
    \toprule
    Solver
    &  \multicolumn{3}{c}{$\alpha = 0.1$} && \multicolumn{3}{c}{$\alpha = 0.2$}
    && \multicolumn{3}{c}{$\alpha = 0.4$} && \multicolumn{3}{c}{$\alpha = 0.8$}
    && \multicolumn{3}{c}{$\alpha = 1.6$} \\
    \cmidrule(){2-4} \cmidrule(){6-8} \cmidrule(){10-12} \cmidrule(){14-16} \cmidrule(){18-20}
    & \multicolumn{1}{c}{$n_{\text{it}}^2$} & \multicolumn{1}{c}{$n_{\text{it}}^1$} & \multicolumn{1}{c}{$n_{\text{it}}^0$} &
    & \multicolumn{1}{c}{$n_{\text{it}}^2$} & \multicolumn{1}{c}{$n_{\text{it}}^1$} & \multicolumn{1}{c}{$n_{\text{it}}^0$} &
    & \multicolumn{1}{c}{$n_{\text{it}}^2$} & \multicolumn{1}{c}{$n_{\text{it}}^1$} & \multicolumn{1}{c}{$n_{\text{it}}^0$} &
    & \multicolumn{1}{c}{$n_{\text{it}}^2$} & \multicolumn{1}{c}{$n_{\text{it}}^1$} & \multicolumn{1}{c}{$n_{\text{it}}^0$} &
    & \multicolumn{1}{c}{$n_{\text{it}}^2$} & \multicolumn{1}{c}{$n_{\text{it}}^1$} & \multicolumn{1}{c}{$n_{\text{it}}^0$} \\
    \midrule
    Single-level Newton          & {--} & {--} & 20 && {--} & {--} & 25 && {--} & {--} & 30 && {--} & {--} & 35 && {--} & {--} & 41 \\
    Cascadic multigrid ($m_f=1$) &  14  &   5  &  5 &&  16  &   6  &  6 &&  20  &   5  &  6 &&  23  &   5  &  6  &&  26  &   6 &   6 \\
    Cascadic multigrid ($m_f=3$) &  12  &   5  &  6 &&  15  &   6  &  6 &&  17  &   6  &  6 &&  21  &   5  &  7  &&  24  &   5 &   6 \\
    \bottomrule
  \end{tabular}
  \endgroup
\caption{Nonlinear iteration count ($n_{\text{it}}^{\ell}$) as a function of $\alpha$ and the level $\ell$ for the SPE10 problem using the nonlinear Cascadic multigrid. The nonlinear problem at each level is solved until convergence, starting from the coarsest level ($\ell = 2$).}
\label{tab:spe10_FMG}
\end{table}

Figure~\ref{fig:spe_alpha}, which shows the solution time of each solver as a function of $\alpha$, illustrates that this robustness makes nonlinear cascadic multigrid more efficient than FAS-Newton.
It should be noted, however, that both multilevel solvers only exhibit a very modest increase in solution time from $\alpha = 0.1$ to $\alpha = 1.6$,  in contrast with the rapid increase observed with single-level Newton.

\pgfplotstableread
{
colnames		0		1		2		3		4
alpha		0.1 		0.2		0.4		0.8		1.6 
newton 372.941 487.05 550.943 643.571 746.765
fas_newton1 120.599 128.59 336.889 152.817 185.676
fas_newton3 119.201 139.593 157.823 164.101 179.903
fmg_newton1 89.139 88.1251 84.1998 90.0546 107.92
fmg_newton3 83.8132 90.229 92.1218 87.2057 102.971
}\SPEAlphaData
\pgfplotstabletranspose[colnames from=colnames]\SPEAlphaData{\SPEAlphaData}

\begin{figure}[h!]
  \centering
  \begin{tikzpicture}
    \begin{semilogxaxis}[
                  width=.7\textwidth,
                  height=.3\textwidth,
                  grid = major,
                  major grid style={very thin,draw=gray!30},
                  xmin=0, xmax=2.2,
                  ymin=0,ymax=800,
                  xlabel={$\alpha$},
                  ylabel={solution time (s)},
                  xtick=data,
                  ytick={0, 100, 200, 300, 400, 500, 600, 700, 800},
                  ylabel near ticks,
                  xlabel near ticks,
                  legend style={font=\small},
                  tick label style={font=\small},
                  xticklabels from table={\SPEAlphaData}{alpha},
                  label style={font=\small},
                  legend entries={Single-level Newton, FAS-V-Cycle($m_f=1$), FAS-V-Cycle($m_f=3$), 
                  			  Cascadic-MG($m_f=1$), Cascadic-MG($m_f=3$)},
                  legend cell align={left},
                  legend pos=outer north east,
                  ]
      \addplot [mark=triangle, skyblue1, thick] table [x=alpha,y=newton] {\SPEAlphaData};
      \addplot [mark=square, scarletred1, thick] table [x=alpha, y=fas_newton1] {\SPEAlphaData};
      \addplot [mark=asterisk, chocolate1, thick] table [x=alpha, y=fas_newton3] {\SPEAlphaData};
      \addplot [mark=o, plum1, thick] table [x=alpha, y=fmg_newton1] {\SPEAlphaData};
      \addplot [mark=x, applegreen, thick] table [x=alpha, y=fmg_newton3] {\SPEAlphaData};
    \end{semilogxaxis}
  \end{tikzpicture}
   \caption{Solution time in seconds as a function of the parameter, $\alpha$, controlling the nonlinearity of the permeability function in the SPE10 model (Example 2). The multilevel solvers rely on a three-level hierarchy. We consider the impact of the number of flux degrees of freedom per interface, $m_f$, on the solution time of the FAS solvers.}
   \label{fig:spe_alpha}
\end{figure}
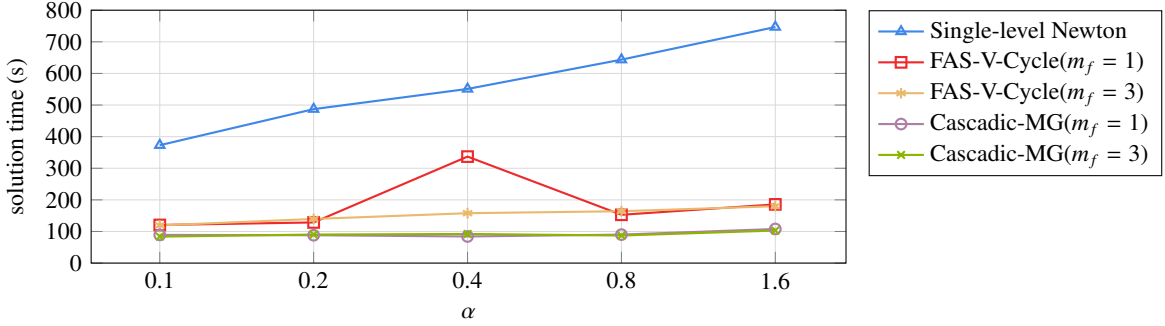

\subsection{Example 3: static Richards' equation}\label{sec:richard}

\begin{figure}
  \hfill
  \begin{subfigure}[b]{.45\linewidth}
    \centering
    \includegraphics[width=\linewidth]{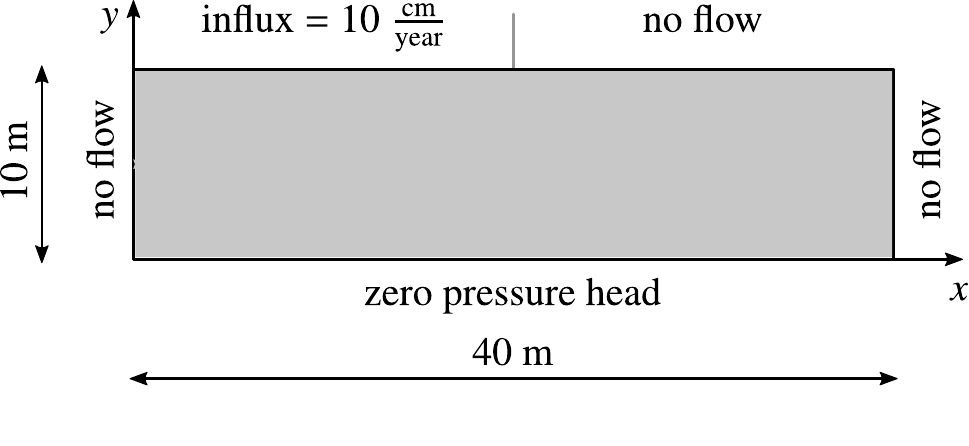}
    \caption{}
    \label{fig:richard_setup}
  \end{subfigure}
  \hfill
  \begin{subfigure}[b]{.45\linewidth}
    \centering
    \includegraphics[width=\linewidth]{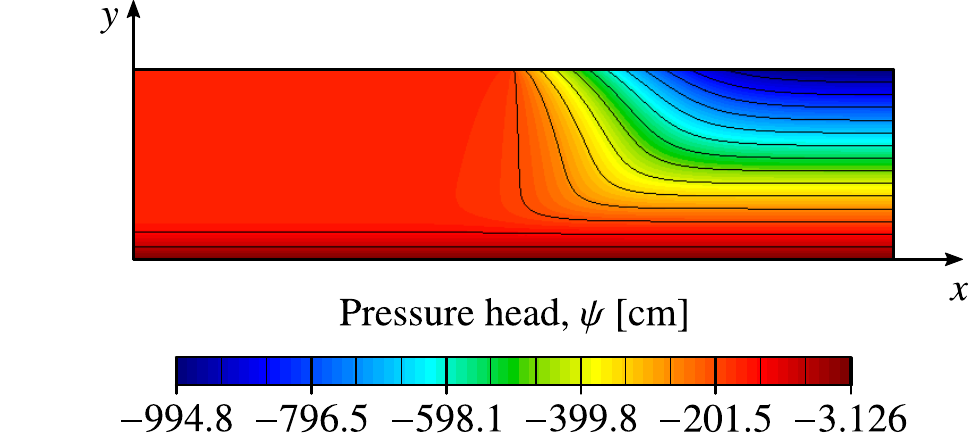}
    \caption{}
    \label{fig:richard_sol}
  \end{subfigure}
  \hfill
  \caption{Domain sketch (a) and pressure head solution (b) for the static Richards' equation (Example 3). It is obtained on the coarser mesh (160 $\times$ 40 cells) for the parameters corresponding to the porous sand medium given in Table~\ref{tab:richard-parameters}.}
\end{figure}

In the last example, we test our nonlinear multigrid solver on a nonlinear problem based on the static Richards' equation.
To do so, we replace \eqref{eq:Darcy_law}  with
\begin{equation}
\tensorOne{q} (\tensorOne{x}, \psi )
	=
  -\widehat{\mathbb{K}}(\tensorOne{x}, \psi ) \cdot \nabla (\psi + z) (\tensorOne{x}), \qquad
	 \tensorOne{x} \in \Omega,
\end{equation}
where $\psi$ denotes the pressure head, and $z$ is the height.
Note that by setting $p := \psi + z$ and $\mathbb{K}(p) := \widehat{\mathbb{K}}(p - z) = \widehat{\mathbb{K}}(\psi)$, we recover our model problem.
Note that here the diffusion tensor represents the hydraulic conductivity, having units of length per time.
We consider a test problem from \cite{mckeon87}.
The domain, in meters, is $\Omega = (0, 4000) \times(0, 1000)$.
We set the source term to zero ($f \equiv 0$).
We impose the boundary conditions depicted in Fig.~\ref{fig:richard_setup}.
The problem is highly nonlinear because of the constitutive relationship selected for $\widehat{\mathbb{K}}(\psi)$ \cite{mckeon87}:

\begin{equation}
  \widehat{\mathbb{K}}(\psi) = \mathbb{K}_0\frac{\alpha}{\alpha + | \psi |^\beta} = (k_0\mathbb{I})\frac{\alpha}{\alpha + | \psi |^\beta},
  \label{eq:permeability_function_richards}
\end{equation}
where $\mathbb{I}\in\mathbb{R}^{2\times 2}$ is the identity tensor in $\mathbb{R}^2$ and the values of $\alpha, \beta, k_0$ for different media are listed in Table~\ref{tab:richard-parameters}.
The pressure head solution is shown in Fig.~\ref{fig:richard_sol} for the parameters corresponding to the porous sand medium.

\begin{table}[h!]
\centering
\small
\begin{tabular}{l
                S[ table-number-alignment = left,
                   table-figures-integer = 1,
                   table-figures-decimal = 3,
                   table-figures-exponent = 1]
                S[ table-number-alignment = left,
                   table-figures-integer = 1,
                   table-figures-decimal = 2]
                S[ table-number-alignment = left,
                   table-figures-integer = 1,
                   table-figures-decimal = 3,
                   table-figures-exponent = 1]}
\toprule
 {Medium}        &
 \multicolumn{1}{l}{$\alpha$} &
 \multicolumn{1}{l}{$\beta$} &
 \multicolumn{1}{l}{$k_0$ [cm/day]} \\
 \midrule
 Loam            &   1.246e2    & 1.77 &   1.067e0 \\
 Sand            &   1.175e6    & 4.74 &   8.160e2 \\
% Coefficient & \multicolumn{1}{l}{Loam} & \multicolumn{1}{l}{Sand} \\
% \midrule
% $\alpha$       &               124.6   &                 1.175e6  \\
% $\beta$        &                 1.77  &                 4.74     \\
% $k_0$ [cm/day] &                 1.067 &               816        \\
 \bottomrule
\end{tabular}
\caption{Parameters used in the hydraulic conductivity function \eqref{eq:permeability_function_richards} for the loam and sand media. These parameters are the same as those used in \cite{mckeon87}.}
\label{tab:richard-parameters}
\end{table}

The initial mesh is uniformly discretized with 160 $\times$ 40 cells, and then it is regularly refined to create larger problems that we use to assess the scalability of the nonlinear solvers.
We set the coarsening factor to 36 for all levels.
For the problems run on the initial mesh, the FAS solvers are based on a three-level hierarchy.
When the refined meshes are used, the FAS solvers rely on four levels.
In this section, we only consider the residual-based backtracking line-search of Algorithm~\ref{alg:backtracking}.
For the loam medium, we set the maximum number of backtracking steps to $n^\text{loam}_{\max} = 4$ as in Examples 1 and 2.
However, since the parameters of Table~\ref{tab:richard-parameters} corresponding to sand medium result in a more nonlinear problem, we set $n^\text{sand}_{\max} = 10$.

%\begin{rmk}
 We remark that since the meshes are refined recursively, geometric multigrid could also be applied to solve the resulting nonlinear problems.
Indeed, that is the subject of study in \cite{mckeon87}.
Here, the purpose of refinement is solely to test the solver convergence with respect to problem size.
We treat each refined problem as an individual problem, and the coarsening does not use any refinement information.
%\end{rmk}

The solution time and nonlinear iteration counts for the single-level and FAS nonlinear solvers are reported in Table~\ref{tab:richard-loam-backtrack} for the loam medium and Table~\ref{tab:richard-sand-backtrack} for the sand medium.
For the loam medium, we see that both FAS-Picard and FAS-Newton require fewer nonlinear iterations and can solve the four problems significantly faster than their single-level counterparts.
Figure~\ref{fig:richard_ref}, which shows the solution time of each solver as a function of problem size, illustrates that all solvers exhibit good scalability, and that FAS-Newton($m_{\mathcal{A}} = 13$) is the fastest solver for all problem sizes.

As mentioned above, the nonlinearity is stronger for the sand medium.
We have observed that all the Picard-based solvers fail to converge in this configuration.
However, for the Newton-based solvers, the FAS approach remains superior to the single-level approach in terms of both robustness---as shown by the smaller nonlinear iteration counts---and efficiency---as demonstrated by the significant reduction in solution time.
FAS-Newton($m_{\mathcal{A}} = 13$) still achieves the best performance among the solvers considered here.

We stress the fact that this example confirms that enriching the coarse space with more pressure degrees of freedom per aggregate improves the nonlinear behavior and reduces the solution times of the FAS solvers; see Tables~\ref{tab:richard-loam-backtrack} and \ref{tab:richard-sand-backtrack}.

% \begin{figure}\setlength{\unitlength}{2mm}
% \begin{center}
% \begin{picture}(60,20)(0,8)
% \linethickness{0.4mm} \color{black!30!white}
% \linethickness{0.2mm}
% \put(26,20){\line(0,1){2}}
% \color{black}\linethickness{0.4mm}
% \put(6,10){\line(0,1){10}}
% \put(6,10){\line(1,0){40}}
% \put(6,20){\line(1,0){40}}
% \put(46,10){\line(0,1){10}}
% \put(-4,14){no flux}
% \put(48,14){no flux}
% \put(23,7){$p$ = 0}
% \put(9,22){influx = $\frac{10}{365}$}
% \put(31,22){no flux}
% \end{picture}
% \end{center}
% \caption{Boundary conditions for Example 3.}
% \label{fig:richard_bc}
% \end{figure}

\begin{table}[h!]
  \centering
%\scalebox{.9}{
%\begin{tabular}{c||cccccc}
%\hline
% \#cells & 25.6 $\times 10^3$ & 102.4  $\times 10^3$ & 409.6  $\times 10^3$ & 1638.4  $\times 10^3$ \\ \hline
% Single-level Picard & 8.32 (121) & 35.1 (123) & 162.0 (126) & 743.4 (127)  \\
% FAS-Picard ($m_{\mathcal{A}}=1$) & 5.36 (33) & 16.7 (27) & 85.7 (30) & 364.9 (28)  \\
% FAS-Picard ($m_{\mathcal{A}}=13$) & 4.06 (24) & 11.7 (18) & 55.8 (18) & 225.9 (17*)  \\
%%\hline
%\hline
% Single-level Newton & 1.33 (10) & 5.39 (10) & 27.8 (10) & 126.0 (10) \\
% FAS-Newton ($m_{\mathcal{A}}=1$) & 0.98 (4*) & 3.99 (4*) & 16.2 (3) &  76.7 (3)  \\
% FAS-Newton ($m_{\mathcal{A}}=13$) & 0.95 (3) & 2.82 (2) & 13.3 (2) & 56.3 (2) \\
% \hline
%\end{tabular}}
%\\
  \small
  \begingroup
  \setlength{\tabcolsep}{6pt}
  \begin{tabular}{l
                  S[ table-figures-integer = 3,
                     table-figures-decimal = 2] 
                  S[ table-number-alignment = center,
                     table-figures-integer = 3,
                     table-figures-decimal = 0]
                  c
                  S[ table-figures-integer = 3,
                     table-figures-decimal = 2] 
                  S[ table-number-alignment = center,
                     table-figures-integer = 3,
                     table-figures-decimal = 0]
                  c
                  S[ table-figures-integer = 3,
                     table-figures-decimal = 2] 
                  S[ table-number-alignment = center,
                     table-figures-integer = 3,
                     table-figures-decimal = 0]
                  c
                  S[ table-figures-integer = 3,
                     table-figures-decimal = 2] 
                  S[ table-number-alignment = center,
                     table-figures-integer = 3,
                     table-figures-decimal = 0]
                 }            
    \toprule
    Solver
    &  \multicolumn{2}{c}{\#cells =  25,600} && \multicolumn{2}{c}{\#cells = 102,400}
    && \multicolumn{2}{c}{\#cells = 409,600} && \multicolumn{2}{c}{\#cells = 1,638,400} \\
    \cmidrule{2-3} \cmidrule{5-6} \cmidrule{8-9} \cmidrule{11-12}
    &  \multicolumn{1}{c}{$T_{\text{sol}}$} & \multicolumn{1}{c}{$n_{\text{it}}$}
    && \multicolumn{1}{c}{$T_{\text{sol}}$} & \multicolumn{1}{c}{$n_{\text{it}}$}
    && \multicolumn{1}{c}{$T_{\text{sol}}$} & \multicolumn{1}{c}{$n_{\text{it}}$}
    && \multicolumn{1}{c}{$T_{\text{sol}}$} & \multicolumn{1}{c}{$n_{\text{it}}$} \\
    \midrule
    Single-level Picard    & 8.32 & 121       && 35.10 & 123       && 162.00 & 126 && 734.40 & 127 \\
    FAS-Picard ($m_\A=1$)  & 5.36 &  33       && 16.70 &  27       &&  85.70 &  30 && 364.90 &  28 \\
    FAS-Picard ($m_\A=13$) & 4.06 &  24       && 11.70 &  18       &&  55.80 &  18 && 225.90 &  17$^{*}$ \\
    \midrule
    Single-level Newton    & 1.33 &  10       &&  5.39 &  10       &&  27.80 &  10 && 126.00 & 10 \\
    FAS-Newton ($m_\A=1$)  & 0.98 &   4$^{*}$ &&  3.99 &   4$^{*}$ &&  16.20 &   3 &&  76.70 &  3 \\
    FAS-Newton ($m_\A=13$) & 0.95 &   3       &&  2.82 &   2       &&  13.30 &   2 &&  56.30 &  2 \\
    \bottomrule
  \end{tabular}
  \endgroup
  \caption{Solution time ($T_\text{sol.}$) in seconds and nonlinear iteration count ($n_{\text{it.}}$) for the Richards' problem (Example 3) with the loam medium (Table~\ref{tab:richard-parameters}).  The superscript $^*$ next to a nonlinear iteration count is used to indicate that, during the last nonlinear iteration, the V-cycle was terminated because convergence was achieved after the nonlinear pre-smoothing step. We aggregate the cells of the fine mesh to construct a three-level hierarchy in the FAS solvers.} 
%% \caption{solution time in seconds (and, between parenthesis, nonlinear iteration count) for the Richards' problem (Example 3) with the loam porous medium (Table~\ref{tab:richard-parameters}). We place the symbol ``*'' next to the nonlinear iteration count to indicate that, during the last nonlinear iteration, the V-cycle was terminated because convergence was achieved after the nonlinear pre-smoothing step. We aggregate the cells of the fine mesh to construct a three-level hierarchy in the FAS solvers.} 
\label{tab:richard-loam-backtrack}
\end{table}

\begin{table}[h!]
\centering
%\scalebox{.9}{
%\begin{tabular}{c||cccccc}
%\hline
% \#cells & 25.6  $\times 10^3$ & 102.4  $\times 10^3$ & 409.6  $\times 10^3$ & 1638.4  $\times 10^3$ \\ %\hline
%\hline
% Single-level Newton & 2.33 (17) & 13.0 (22) & 61.4 (23) & 268.8 (22) \\
% FAS-Newton ($m_{\mathcal{A}}=1$) & 2.09 (7) & 9.05 (7*) & 36.7 (6) & 156.2 (6*)   \\
% FAS-Newton ($m_{\mathcal{A}}=13$) & 1.90 (6) & 4.59 (4*) & 22.0 (4*) & 107.0 (5*) \\
% \hline
%\end{tabular}}
%\\
  \small
  \begingroup
  \setlength{\tabcolsep}{6pt}
  \begin{tabular}{l
                  S[ table-figures-integer = 3,
                     table-figures-decimal = 2] 
                  S[ table-number-alignment = center,
                     table-figures-integer = 3,
                     table-figures-decimal = 0]
                  c
                  S[ table-figures-integer = 3,
                     table-figures-decimal = 2] 
                  S[ table-number-alignment = center,
                     table-figures-integer = 3,
                     table-figures-decimal = 0]
                  c
                  S[ table-figures-integer = 3,
                     table-figures-decimal = 2] 
                  S[ table-number-alignment = center,
                     table-figures-integer = 3,
                     table-figures-decimal = 0]
                  c
                  S[ table-figures-integer = 3,
                     table-figures-decimal = 2] 
                  S[ table-number-alignment = center,
                     table-figures-integer = 3,
                     table-figures-decimal = 0]
                 }            
    \toprule
    Solver
    &  \multicolumn{2}{c}{\#cells =  25,600} && \multicolumn{2}{c}{\#cells = 102,400}
    && \multicolumn{2}{c}{\#cells = 409,600} && \multicolumn{2}{c}{\#cells = 1,638,400} \\
    \cmidrule{2-3} \cmidrule{5-6} \cmidrule{8-9} \cmidrule{11-12}
    &  \multicolumn{1}{c}{$T_{\text{sol}}$} & \multicolumn{1}{c}{$n_{\text{it}}$}
    && \multicolumn{1}{c}{$T_{\text{sol}}$} & \multicolumn{1}{c}{$n_{\text{it}}$}
    && \multicolumn{1}{c}{$T_{\text{sol}}$} & \multicolumn{1}{c}{$n_{\text{it}}$}
    && \multicolumn{1}{c}{$T_{\text{sol}}$} & \multicolumn{1}{c}{$n_{\text{it}}$} \\
    \midrule
    Single-level Newton    & 2.33 & 17 && 13.00 & 22       && 61.40 & 23       && 268.80 & 22       \\
    FAS-Newton ($m_\A=1$)  & 2.09 &  7 &&  9.05 &  7$^{*}$ && 36.70 &  6       && 156.20 &  6$^{*}$ \\
    FAS-Newton ($m_\A=13$) & 1.90 &  6 &&  4.59 &  4$^{*}$ && 22.00 &  4$^{*}$ && 107.00 &  5$^{*}$ \\
    \bottomrule
  \end{tabular}
  \endgroup
  \caption{Solution time ($T_\text{sol.}$) in seconds and nonlinear iteration count ($n_{\text{it.}}$) for the Richards' problem (Example 3) with the sand medium (Table~\ref{tab:richard-parameters}). The Picard-based solvers do not converge for this highly nonlinear problem. The superscript $^*$ next to a nonlinear iteration count is used to indicate that, during the last nonlinear iteration, the V-cycle was terminated because convergence was achieved after the nonlinear pre-smoothing step. We aggregate the cells of the fine mesh to construct a three-level hierarchy in the FAS solvers.} 
  \label{tab:richard-sand-backtrack}
\end{table}

\pgfplotstableread
{
colnames		0		1		2		3
cells 25600 102400 409600 1638400
cell_labels 25,600 102,400 409,600 1,638,400
picard 8.32 35.1 162. 743.4
newton 1.33 5.39 27.8 126.
fas_picard1 5.36 16.7 85.7 364.9
fas_picard13 4.06 11.7 55.8 225.9
fas_newton1 0.98 3.99 16.2 76.7
fas_newton13 0.95 2.82 13.3 56.3
linear 0.6000    2.4000    9.6000   38.4000
}\RichardRefineData
\pgfplotstabletranspose[colnames from=colnames]\RichardRefineData{\RichardRefineData}

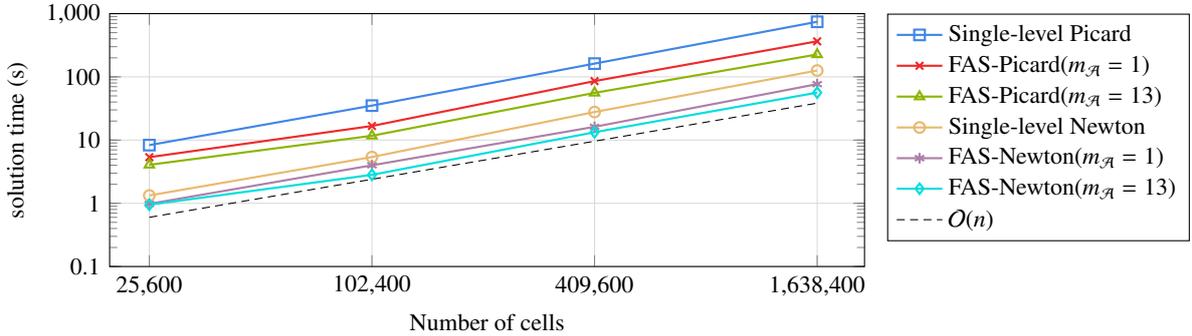
\begin{figure}[h!]
  \centering
  \begin{tikzpicture}
    \begin{loglogaxis}[
                  width=.7\textwidth,
                  height=.3\textwidth,
                  grid = major,
                  major grid style={very thin,draw=gray!30},
                  xmin=20000,xmax=2200000,
                  ymin=0.1,ymax=1000,
                  xlabel={Number of cells},
                  ylabel={solution time (s)},
                  ytick distance=10^1,
                  xtick=data,
                  ylabel near ticks,
                  xlabel near ticks,
                  legend style={font=\small},
                  tick label style={font=\small},
                  xticklabels from table={\RichardRefineData}{cell_labels},
                  log ticks with fixed point,
                  label style={font=\small},
                  legend entries={Single-level Picard, FAS-Picard($m_{\mathcal{A}} = 1$), FAS-Picard($m_{\mathcal{A}} = 13$), 
                  			  Single-level Newton, FAS-Newton($m_{\mathcal{A}} = 1$), FAS-Newton($m_{\mathcal{A}} = 13$), $\mathcal{O}(n)$},
                  legend cell align={left},
                  legend pos=outer north east,
                  ]
      \addplot [mark=square, skyblue1, thick] table [x=cells, y=picard] {\RichardRefineData};
      \addplot [mark=x, scarletred1, thick] table [x=cells, y=fas_picard1] {\RichardRefineData};
      \addplot [mark=triangle, applegreen, thick] table [x=cells, y=fas_picard13] {\RichardRefineData};
      \addplot [mark=o, chocolate1, thick] table [x=cells, y=newton] {\RichardRefineData};
      \addplot [mark=asterisk, plum1, thick] table [x=cells, y=fas_newton1] {\RichardRefineData};
      \addplot [mark=diamond, blue-green, thick] table [x=cells, y=fas_newton13] {\RichardRefineData};
      \addplot [no marks, densely dashed] table [x=cells, y=linear] {\RichardRefineData};
    \end{loglogaxis}
  \end{tikzpicture}  
  \caption{Solution time in seconds as a function of problem size for the four problems based on the Richards' equation (Example 3). We use the parameters of the loam medium given in Table~\ref{tab:richard-parameters}.}
  \label{fig:richard_ref}
\end{figure}

\section{Concluding remarks}\label{sec:conclude}

In this paper, we introduce a nonlinear multigrid solver for heterogeneous nonlinear diffusion problems based on the full approximation scheme.  It relies on
a TPFA finite volume scheme in mixed form and local spectral coarsening following the framework of \cite{barker17,ml-spectral-coarsening}. 
Nonlinear iteration is performed by solving a sequence of nonlinear problems on nested approximation spaces (levels).
The transfer operators between levels are obtained by solving local eigenvalue problems defined on cell aggregates.
The coarsening framework is applicable to problems discretized on general unstructured grids.
We compare the nonlinear multigrid solver with single-level Picard (fixed-point) and Newton iterations for several challenging examples numerical examples.
Results indicate that the nonlinear multigrid solver outperforms the single-level counterparts in two ways.
First, it exhibits a more robust convergence behavior characterized by smaller nonlinear iteration counts.
Second, it is algorithmically scalable and yields significant solution time reductions compared to the single-level approaches.
We have also shown that, in most cases, enriching the coarse space with more local eigenvectors leads to a reduction in solution time.
Another key feature of our coarsening strategy is that it is performed in a mixed (finite-volume) setting, which allows us to approximate fluxes directly.
Coarse fluxes are therefore, by construction, conservative on the coarse levels.
These properties are very attractive for the construction of multilevel nonlinear solvers for multiphase flow and transport simulations, the subject of our ongoing research.

\section*{Acknowledgements}
\label{sec::acknow}
Funding was provided by TOTAL S.A. through the FC-MAELSTROM project.
Portions of this work were performed under the auspices of the U.S. Department of
Energy by Lawrence Livermore National Laboratory under Contract DE-AC52-07-NA27344 (LLNL-JRNL-808398).

\appendix

\section{Construction of the vertex-based and edge-based approximation spaces} \label{sec:construction_of_prolongation_and_projection}

\subsection{Construction of the vertex-based pressure space}\label{sec:vertex-based-space}

Following the spectral coarsening methodology of \cite{ml-spectral-coarsening}, we construct the coarse pressure space aggregate by aggregate by solving local eigenvalue problems.
We use the graph-based terminology of  \cite{ml-spectral-coarsening}, in which graph vertices and edges are the dual representations of mesh cells and interfaces, respectively.
%
%Each local spectral problems is defined on an extended aggregate, which allows for a well-defined treatment of the edge-based degrees of freedom when more than one velocity degree of freedom per coarse interface is used.
%
%Even though we use only one velocity basis vector per coarse interface in the present paper, we intend to explore the enrichment of the coarse edge-based approximation space in the context of nonlinear solvers for subsurface flow in future work.
%
Let us consider a graph $G^{\ell} = (V^{\ell},E^{\ell})$ at level $\ell$, where $V^{\ell} = \{ v^{\ell}_K \}$ is the set of vertices and $E^{\ell} = \{ e^{\ell}_{K,L} \}$ is the set of edges.
To construct the coarse graph, $G^{\ell+1} = (V^{\ell+1},E^{\ell+1})$, we use METIS to partition $G^{\ell}$ into aggregates of fine vertices, $\{ \mathcal{A}^{\ell}_K \}$.
Then, in the coarse graph, each vertex, $v^{\ell+1}_K \in V^{\ell+1}$, represents one of these aggregates, $\mathcal{A}^{\ell}_K$.
We introduce a coarse edge, $e^{\ell+1}_{K,L} \in E^{\ell+1}$, if there exists a fine edge between two vertices belonging to $\mathcal{A}^{\ell}_K$ and $\mathcal{A}^{\ell}_L$, respectively. 
\begin{figure}[t]
  \hfill
  \begin{subfigure}[c]{.35\linewidth}
    \centering
    \includegraphics[width=\linewidth]{aggregates_graph}
    \caption{}
  \end{subfigure}
  \hfill
  \begin{subfigure}[c]{.35\linewidth}
    \centering
    \includegraphics[width=\linewidth]{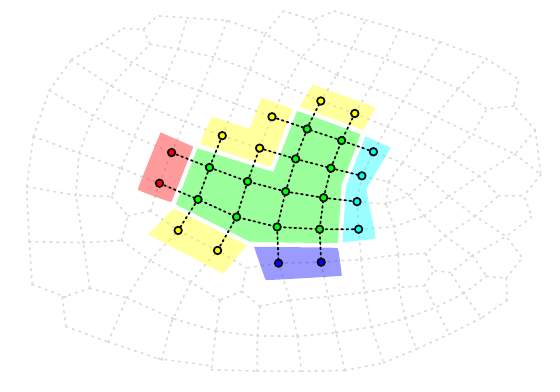}
    \caption{}
  \end{subfigure}
  \hfill\null
  \caption{\label{fig:extended-aggregate}
For the partitioned unstructured mesh shown in (a), consider the coarse aggregate $\mathcal{A}$ shown in green and located in the center.
The local spectral problem is solved on an extended aggregate, $\mathcal{N}(\mathcal{A})$ that consists of all the vertices in the colored cells of (b).
This allows for a well-defined treatment of the edge-based degrees of freedom when we use more than one per interface.}
\end{figure}

Let us consider now an aggregate of fine vertices, $\mathcal{A}^{\ell}_K$.
Let $\mathcal{N}( \mathcal{A}^{\ell}_K )$ be the set of vertices in $V^{\ell}$ that are connected to a fine vertex in $\mathcal{A}^{\ell}_K$ as illustrated in Fig.~\ref{fig:extended-aggregate}.
Denoting by $\Mat{M}^{\ell} |_{\mathcal{N}(\mathcal{A}^{\ell}_K)}$ the submatrix of $\Mat{M}^{\ell}$ restricted to the edges connecting the vertices  in $\mathcal{N}(\mathcal{A}^{\ell}_K)$, and by $\Mat{D}^{\ell} |_{\mathcal{N}(\mathcal{A}^{\ell}_K)}$ the submatrix of $\Mat{D}^{\ell}$ whose rows and columns are restricted to, respectively, the vertices in $\mathcal{N}(\mathcal{A}^{\ell}_K)$ and the edges connecting the vertices in $\mathcal{N}(\mathcal{A}^{\ell}_K)$, the local eigenvalue problem associated with aggregate $\mathcal{A}^{\ell}_K$ is
\begin{equation}
\Mat{D}^{\ell} |_{\mathcal{N}(\mathcal{A}^{\ell}_K)} (\Mat{M}^{\ell} |_{\mathcal{N}(\mathcal{A}^{\ell}_K)})^{-1} (D^{\ell} |_{\mathcal{N}(\mathcal{A}^{\ell}_K)})^T \Vec{q} = \lambda \Vec{q}.  
\label{eq:local-eigenvalue-problem}  
\end{equation}
We select the $m_{\mathcal{A}^{\ell}_K}$ eigenvectors corresponding to the $m_{\mathcal{A}^{\ell}_K}$ smallest eigenvalues in \eqref{eq:local-eigenvalue-problem}.
To remove any linear dependence between these eigenvectors, we apply a singular value decomposition (SVD) to the selected subset.
The remaining eigenvectors are used to form the columns of a rectangular, orthogonal matrix $\Mat{P}_{\mathcal{A}^{\ell}_K} := [\Vec{q}_1 \, \dots \, \Vec{q}_{m_{\mathcal{A}^{\ell}_K}}]$.
%
%Once the matrices $\Mat{P}_{\mathcal{A}}$ are assembled for all the cell aggregates, they are extended to zero outside their aggregate and are collected to construct the global block-diagonal pressure interpolation operator, $\Mat{P}_p := \text{diag}( \Mat{P}_{\mathcal{A}} )$.
The matrices $\{ \Mat{P}_{\mathcal{A}^{\ell}_K} \}$ are then collected to construct the global block-diagonal pressure interpolation operator, $(\Mat{P}_p)^{\ell}_{\ell+1}$---whose (rectangular) diagonal block corresponding to aggregate $\mathcal{A}^{\ell}_K$ is $\Mat{P}_{\mathcal{A}^{\ell}_K}$.
The global projection operator for the pressure degrees of freedom is defined as
\begin{equation}
(\Mat{\Pi}_p)^{\ell} := (\Mat{P}_p)^{\ell}_{\ell+1} (\Mat{Q}_p)^{\ell+1}_{\ell},
\label{eq:projection_pi_p}    
\end{equation}
where $(\Mat{Q}_p)^{\ell+1}_{\ell} := \big( (\Mat{P}_p)^{\ell}_{\ell+1} \big)^T$.

In preparation for the construction of the edge-based flux space, reviewed in the next section, we note that it is shown in \cite{ml-spectral-coarsening} that for each aggregate $\mathcal{A}^{\ell}_K$, the rectangular, orthogonal matrix $\Mat{P}_{\mathcal{A}^{\ell}_K}$ can be written in the form
\begin{equation}
\Mat{P}_{\mathcal{A}^{\ell}_K} =
\big[
\Vec{q}^{PV}_{\mathcal{A}^{\ell}_K}
\, \, \,
\Mat{P}^{NPV}_{\mathcal{A}^{\ell}_K} 
\big],
\label{eq:structure_pressure_interpolation}
\end{equation}
where ``PV'' refers to the work of Pasciak and Vassilevski \cite{pasciak2008exact}.
Let $\Vec{1}^{0}$ be the vector of ones whose size is equal to the number of vertices on level $\ell = 0$.
Introducing the vector defined recursively by $\Vec{1}^{\ell+1} := (\Mat{Q}_p)^{\ell+1}_{\ell} \Vec{1}^{\ell}$, the first column of $\Mat{P}_{\mathcal{A}^{\ell}_K}$ in \eqref{eq:structure_pressure_interpolation} is defined as
\begin{equation}
\Vec{q}^{PV}_{\mathcal{A}^{\ell}_K}
=
\frac{1}{|| \Vec{1}^{\ell}|_{\mathcal{A}^{\ell}_K} ||}
\Vec{1}^{\ell}|_{\mathcal{A}^{\ell}_K}.
\label{eq:definition_q_pv}
\end{equation}

\subsection{Construction of the edge-based flux space}\label{sec:edge-based-space}

The coarse edge-based space consists of two types of degrees of freedom.
In this section, we first review the construction of the trace extensions at the coarse interfaces, and then we discuss the definition of the bubbles in the cell aggregates.

Each trace extension is associated with a coarse interface, $\mathcal{F}^{\ell}_{K,L}$, between two aggregates, $\mathcal{A}^{\ell}_K$ and $\mathcal{A}^{\ell}_L$, defined as
\begin{equation}
\mathcal{F}^{\ell}_{K,L} := \{ e^{\ell}_{K,L} = ( v^{\ell}_K, v^{\ell}_L ) \in E^{\ell} : v^{\ell}_K \in \mathcal{A}^{\ell}_K \, \text{and} \, v^{\ell}_L \in \mathcal{A}^{\ell}_L \}.
\end{equation}
%
%Consider two aggregates, $\mathcal{A}_K$ and $\mathcal{A}_L$ (with $K < L$) defining a coarse interface made of a subset of fine edges denoted by $E_{K,L} := \overline{E}(\mathcal{A}_K) \cap \overline{E}(\mathcal{A}_L)$.
%
At this interface, we define the trace coming from $\mathcal{A}^{\ell}_K$ and corresponding to the $i$th column of $\Mat{P}_{\mathcal{A}^{\ell}_K}$ as
\begin{equation}
\big(  (\Mat{M}^{\ell} |_{\mathcal{N}(\mathcal{A}^{\ell}_K)})^{-1} (D^{\ell} |_{\mathcal{N}(\mathcal{A}^{\ell}_K)})^T \Vec{q}_i \big) \big|_{\Sigma^{\ell}(\mathcal{F}^{\ell}_{K,L})},  
\label{eq:definition-traces}  
\end{equation}
where $\Sigma^{\ell}(\mathcal{F}^{\ell}_{K,L})$ is the span of the edge-based degrees of freedom associated with interface $\mathcal{F}^{\ell}_{K,L}$.
The traces coming from $\mathcal{A}^{\ell}_L$ are defined in a similar fashion.
Following \citep{ml-spectral-coarsening}, we also introduce the vector $\Vec{\sigma}^{PV}_{\mathcal{F}^{\ell}_{K,L}}$  obtained by first solving the following local problem on the union of the two aggregates, $\mathcal{A}^{\ell}_{\mathcal{F}^{\ell}_{K,L}} := \mathcal{A}^{\ell}_{K} \cup \mathcal{A}^{\ell}_{L}$:
\begin{equation}
 \begin{bmatrix}
     \Mat{M}^{\ell} |_{\mathcal{A}_{\mathcal{F}^{\ell}_{K,L}}} & (\Mat{D}^{\ell} |_{\mathcal{A}_{\mathcal{F}^{\ell}_{K,L}}})^{T} \\
     \Mat{D}^{\ell} |_{\mathcal{A}_{\mathcal{F}^{\ell}_{K,L}}} & 0
 \end{bmatrix}
 \begin{bmatrix}
   \Vec{\sigma}^{PV}_{\mathcal{A}_{\mathcal{F}^{\ell}_{K,L}}} \\
   \Vec{p}_{\mathcal{A}_{\mathcal{F}^{\ell}_{K,L}}}
 \end{bmatrix}
 =
 \begin{bmatrix}
   0 \\
   f_{\mathcal{F}^{\ell}_{K,L}}
 \end{bmatrix},
 \label{eq:local_problem_to_define_sigma_pv}
\end{equation}
where $f_{\mathcal{F}^{\ell}_{K,L}}$ is defined as
\begin{equation}
f_{\mathcal{F}^{\ell}_{K,L}}
=
\begin{bmatrix}
- \frac{1}{|| \Vec{1}^{\ell} |_{\mathcal{A}^{\ell}_K} ||^2}\Vec{1}^{\ell} |_{\mathcal{A}^{\ell}_K}  \\
  \frac{1}{|| \Vec{1}^{\ell} |_{\mathcal{A}^{\ell}_L} ||^2} \Vec{1}^{\ell} |_{\mathcal{A}^{\ell}_L}
\end{bmatrix}.
\end{equation}
After solving \eqref{eq:local_problem_to_define_sigma_pv}, we set $\Vec{\sigma}^{PV}_{\mathcal{F}^{\ell}_{K,L}} := \Vec{\sigma}^{PV}_{\mathcal{A}_{\mathcal{F}^{\ell}_{K,L}}}|_{\Sigma^{\ell}(\mathcal{F}^{\ell}_{K,L})}$, so that $\Vec{\sigma}^{PV}_{\mathcal{F}^{\ell}_{K,L}}$ is supported on $\mathcal{F}^{\ell}_{K,L}$.
With definition \eqref{eq:local_problem_to_define_sigma_pv}, $\Vec{\sigma}^{PV}_{\mathcal{F}^{\ell}_{K,L}}$ satisfies:
\begin{equation}
(\Vec{q}^{PV}_{\mathcal{A}^{\ell}_K})^T \Mat{D}^{\ell} |_{\mathcal{A}^{\ell}_K, \, \mathcal{F}^{\ell}_{K,L}} \Vec{\sigma}^{PV}_{\mathcal{F}^{\ell}_{K,L}} \neq 0.  
\end{equation}
where $\Vec{q}^{PV}_{\mathcal{A}^{\ell}_{K}}$ is defined in \eqref{eq:definition_q_pv}.
This property is needed to prove the commutativity relation $\Mat{D}^{\ell} \Mat{\Pi}^{\ell}_{\Vec{\sigma}} = \Mat{\Pi}^{\ell}_{p} \Mat{D}^{\ell}$ and to demonstrate the \textit{inf-sup} stability of the coarse space \cite{ml-spectral-coarsening}.
We mention here that for the numerical examples performed with only one coarse degree of freedom per interface, the traces obtained by \eqref{eq:definition-traces} are discarded and $\Vec{\sigma}^{PV}_{\mathcal{F}^{\ell}_{K,L}}$ is the only flux degree of freedom at coarse interface $\mathcal{F}^{\ell}_{K,L}$ that is kept in the coarse space.
If we use more that one flux degree of freedom per interface, we remove any possible linear dependence by applying an SVD to the collected traces at each coarse interface.

We now extend the remaining traces associated with coarse interface $\mathcal{F}^{\ell}_{K,L}$ to the neighboring aggregates, $\mathcal{A}^{\ell}_K$ and $\mathcal{A}^{\ell}_L$.
%
%To extend one of the selected traces, $\Vec{\sigma}_{E_{K,L}}$, to aggregate $\mathcal{A}_K$, we first extend $\Vec{\sigma}_{E_{K,L}}$, by zero on the remainder of the boundary $\partial E( \mathcal{A}_K ) \setminus E_{K,L}$.
%
To extend $\Vec{\sigma}_{\mathcal{F}^{\ell}_{K,L}}$ to $\mathcal{A}^{\ell}_K$, we solve a local Neumann boundary value problem restricted to the interior of the aggregate.
Specifically, we find $\Vec{\sigma}_{\mathcal{A}^{\ell}_K}$ on $\mathcal{A}^{\ell}_K$ and $\Vec{p}_{\mathcal{A}^{\ell}_K}$ on $\mathcal{A}^{\ell}_K$ such that:
\begin{equation}
 \begin{bmatrix}
     \Mat{M}^{\ell} |_{\mathcal{A}^{\ell}_K} & (\Mat{D}^{\ell} |_{\mathcal{A}^{\ell}_K})^T \\
     \Mat{D}^{\ell} |_{\mathcal{A}^{\ell}_K} & 0
 \end{bmatrix}
 \begin{bmatrix}
   \Vec{\sigma}_{\mathcal{A}^{\ell}_K} \\
   \Vec{p}_{\mathcal{A}^{\ell}_K}
 \end{bmatrix}
 =
 \begin{bmatrix}
   - \Mat{M}^{\ell} |_{\mathcal{A}^{\ell}_K, \, \mathcal{F}^{\ell}_{K,L}} \Vec{\sigma}_{\mathcal{F}^{\ell}_{K,L}} \\
   c_{\mathcal{A}^{\ell}_K} \Vec{q}^{PV}_{\mathcal{A}^{\ell}_K} - \Mat{D}^{\ell} |_{\mathcal{A}^{\ell}_K, \, \mathcal{F}^{\ell}_{K,L}} \Vec{\sigma}_{\mathcal{F}^{\ell}_{K,L}}
 \end{bmatrix}.
 \label{eq:local_neumann_boundary_value_problem}
\end{equation}  
where the non-zero constant $c_{\mathcal{A}^{\ell}_K}$ on the right-hand side of \eqref{eq:local_neumann_boundary_value_problem} is:
\begin{equation}
  c_{\mathcal{A}^{\ell}_K} :=
  (\Vec{q}^{PV}_{\mathcal{A}^{\ell}_K})^T \Mat{D}^{\ell} |_{\mathcal{A}^{\ell}_{K},\, \mathcal{F}^{\ell}_{K,L}} \Vec{\sigma}_{\mathcal{F}^{\ell}_{K,L}}.
\end{equation}
The same procedure is used to extend $\Vec{\sigma}_{\mathcal{F}^{\ell}_{K,L}}$ to the remaining part of aggregate $\mathcal{A}^{\ell}_L$, so that the extended vector is supported on the union of the edges connecting the vertices in $\mathcal{A}^{\ell}_K$, the edges connecting the vertices in $\mathcal{A}^{\ell}_L$, and the edges in $\mathcal{F}^{\ell}_{K,L}$.

To guarantee that the compatibility constraints outlined in \cite{ml-spectral-coarsening} are satisfied, we add linearly independent bubbles to the edge-based (flux) space.
Each bubble is associated with an aggregate $\mathcal{A}^{\ell}$. 
%Consider now a cell aggregate $\mathcal{A}$. 
%To complete the local construction of the coarse edge-based velocity space in aggregate $\mathcal{A}_L$, we add $m_{\mathcal{A}_L}-1$ linearly independent ``bubble'' basis functions supported on the interior edges of $\mathcal{A}_L$. 
%
For each column $\Vec{q}$ of the matrix $\Mat{P}^{NPV}_{\mathcal{A}^{\ell}}$ appearing in \eqref{eq:structure_pressure_interpolation}, we obtain the bubble basis vector supported on the edges of $\mathcal{A}^{\ell}$ by solving the saddle-point system \eqref{eq:local_neumann_boundary_value_problem} in which we replace the right-hand side with $\begin{bmatrix} 0 \\ \Vec{q} \end{bmatrix}$. 
The resulting vector is then extended by zero outside $\mathcal{A}^{\ell}$.

To summarize, the coarse edge-based space is spanned by the trace extensions---to which we have added the vector  defined by \eqref{eq:local_problem_to_define_sigma_pv}---associated with the coarse interfaces, and by the bubbles associated with the aggregates.
Since the traces are extended into the interior edges of the neighboring aggregates, the resulting interpolation matrix, $(\Mat{P}_{\sigma})^{\ell}_{\ell+1}$, has the lower-triangular form
\begin{equation}
(\Mat{P}_{\sigma})^{\ell}_{\ell+1} := \begin{bmatrix}
\Mat{P}^{\textit{TR}}_\sigma & \\ \Mat{P}^{E}_\sigma & \Mat{P}^B_\sigma
\end{bmatrix},  
\end{equation}
where $\Mat{P}^{\textit{TR}}_{\sigma}$ and $\Mat{P}^{B}_\sigma$ are block-diagonal matrices.
Specifically, in $\Mat{P}^{\textit{TR}}_{\sigma}$, the columns of the $i$th diagonal block are formed by the part of each trace extension associated with the $i$th interface that is supported on the edges forming the interface.
In $\Mat{P}^{B}_\sigma$, the columns of the $i$th diagonal block are formed by the bubbles constructed in the $i$th aggregate.
$\Mat{P}^{E}_{\sigma}$ contains the ``extended'' part of the trace extensions that is supported on the interior edges of the aggregates.
This concludes the construction of the coarse edge-based space.
We refer to \cite{ml-spectral-coarsening} for a thorough description of the construction of the matrix $(\Mat{Q}_{\sigma})^{\ell+1}_{\ell}$ used to obtained the projection operator in the form $(\Mat{\Pi}_{\sigma})^{\ell} := (\Mat{P}_{\sigma})^{\ell}_{\ell+1} (\Mat{Q}_{\sigma})^{\ell+1}_{\ell}$.

%To summarize, the coarse edge-based space is spanned by one basis vector per coarse interface, and by the extended bubble basis vectors computed aggregate-by-aggregate using \eqref{eq:bubble_functions}.
%
%These vectors form the columns of the interpolation matrix, $\Mat{P}_{

\section{The piecewise-constant projector $\widetilde{\Mat{\Pi}}^{\ell}$}\label{sec:piecewise-constant-projection}

Recall that when evaluating the nonlinear operator on each level $\ell$, the pressure approximation is first projected to
the space of piecewise-constant functions, with the constants representing the average pressure values in different cells
of that level, cf. \eqref{eq:structure}. Here, the exact definition of the projection will be given. On level $\ell = 0$,
$\widetilde{\Mat{\Pi}}^{0}$ is simply the identity map. Next, on level $\ell > 0$, we first consider the Boolean rectangular sparse matrix
which maps the cell aggregates to the cells, denoted by $(\widetilde{P})_{\ell}^{\ell-1}$.
Specifically, $[(\widetilde{P})_{\ell}^{\ell-1}]_{ij} = 1$ if cell $i$ (at the finer level, $\ell-1$) belongs to aggregate $j$ (at the coarser level, $\ell$);  $[(\widetilde{P})_{\ell}^{\ell-1}]_{ij} = 0$ otherwise.
Furthermore, we multiply them together to get the connection between level $\ell$ and level 0:
\begin{equation}
(\widetilde{P})_{\ell}^{0} := (\widetilde{P})_{\ell}^{\ell-1} (\widetilde{P})_{\ell-1}^{\ell-2} \cdots (\widetilde{P})_{1}^{0}.
\end{equation}
Similarly, we combine the interpolation matrices of the pressure space:
\begin{equation}
(\Mat{P}_p)_{\ell}^{0} := (\Mat{P}_p)_{\ell}^{\ell-1} (\Mat{P}_p)_{\ell-1}^{\ell-2} \cdots (\Mat{P}_p)_{1}^{0}.
\end{equation}
Let $M_p$ be the mass matrix, which is a diagonal matrix with cell volumes on the diagonal: $(M_p)_{K, K} = |\tau_K|$. The piecewise-constant projector is defined as
\begin{equation}
\widetilde{\Mat{\Pi}}^{\ell} := \left( \big( (\widetilde{P})_{\ell}^0 \big)^T M_p (\widetilde{P})_{\ell}^{0} \right)^{-1} \big( (\widetilde{P})_{\ell}^0 \big)^T M_p (\Mat{P}_p)_{\ell}^0.
\label{eq:pwc_projector}
\end{equation}
Notice that $\big( (\widetilde{P})_{\ell}^0 \big)^T M_p (\widetilde{P})_{\ell}^{0}$ is also diagonal, and the $i$th entry on its diagonal is the volume of $\mathcal{A}^{\ell}_i$: $|\mathcal{A}^{\ell}_i| = \sum_{\tau_K\subset \mathcal{A}^{\ell}_i} |\tau_K|$.
In fact, for any vector $\Vec{p}^0 = \left( p^0_K \right)_{\tau_K\in\mathcal{T}}$ on level 0, $\left( \big( (\widetilde{P})_{\ell}^0 \big)^T M_p (\widetilde{P})_{\ell}^{0} \right)^{-1} \big( (\widetilde{P})_{\ell}^0 \big)^T M_p \Vec{p}^0$ is a vector of size number of aggregates, and the $i$th entry equals the weighted average:  $|\mathcal{A}^{\ell}_i|^{-1} \left( \sum_{\tau_K\subset \mathcal{A}^{\ell}_i} |\tau_K| p^0_K\right)$.
Therefore, conceptually what $\widetilde{\Mat{\Pi}}^{\ell}$ does is to first interpolate a given approximation $\Vec{p}^\ell$ on level $\ell$ all the way to the finest level, and then compute the averages of $ (\Mat{P}_p)_{\ell}^0 \Vec{p}^\ell$ over the aggregates on level $\ell$.

An important property of $\widetilde{\Mat{\Pi}}^{\ell} $ is that, for any pressure approximation $\Vec{p}^\ell$ on level $\ell$, if we interpolate $\widetilde{\Mat{\Pi}}^{\ell}(\Vec{p}^\ell)$ all the way to the fine level, evaluate the nonlinear function $\kappa$ in each fine-grid cell, and then compute the average value of $\kappa$ of each aggregate, it is equivalent to evaluating $\kappa$ directly using entries of $\widetilde{\Mat{\Pi}}^{\ell}\Vec{p}^\ell$. That is,
\begin{equation}
\kappa(\widetilde{\Mat{\Pi}}^{\ell}\Vec{p}^\ell) = \left( \big( (\widetilde{P})_{\ell}^0 \big)^T M_p (\widetilde{P})_{\ell}^{0} \right)^{-1}\big( (\widetilde{P})_{\ell}^{0} \big)^T M_p \;\kappa \big((\widetilde{P})_{\ell}^{0} \widetilde{\Mat{\Pi}}^{\ell}\Vec{p}^\ell\big).
\end{equation}
Hence, the nonlinear operator $\blkMat{A}^{\ell}$ in \eqref{eq:structure} can be evaluated directly on level $\ell$ without visiting the finest level during the multigrid iteration.

\section{Proof of Proposition~\ref{prop:assemble}}\label{sec:assemble}

\begin{proof}
In Section~\ref{sec:all-level-M}, we have shown that property \eqref{eq:local-product-decomposition} holds at the finest level, $\ell = 0$.
In this appendix, we assume, without loss of generality, that the proposition holds for some $\ell \geq 0$.
By the principle of induction, it suffices to show that the proposition also holds for some $\ell+1$.

%The key idea is to assemble the element matrices $M_{\tau_j}^\ell(\Vec{p})$ in the finer level in each aggregate $\A_i$ separately. Let the assembled matrix in $\A_i$ be denoted by $M_{\A_i}^\ell(\Vec{p})$. Note that $M_{\A_i}^\ell(\Vec{p})$ is not a principal submatrix of $M^\ell(\Vec{p})$ because, for flux degrees of freedom (on the finer level) that are shared between two aggregates, only the contribution from $M_{\tau_j}^\ell(\Vec{p})$ with $\tau_j\in \A_i$ is assembled. Next, let $(\mathcal{P}_\sigma^{\A_i})_{\ell+1}^\ell$ be the restriction of $(\mathcal{P}_\sigma)_{\ell+1}^\ell$ on $\A_i$, then we coarsen $M_{\A_i}^\ell(\Vec{p})$ locally through the Galerkin projection in each aggregate
Let us consider an aggregate, $\mathcal{A}^{\ell}$, obtained by agglomerating finer cells at level $\ell$, such that $\mathcal{A}^{\ell} = \bigcup_{i \, \, s.t. \, \, \mathcal{A}^{\ell-1}_i \subset \mathcal{A}^{\ell}}  \mathcal{A}^{\ell-1}_i$.
Let $\Mat{M}^{\ell}_{\mathcal{A}^{\ell}}(\Vec{p})$ be the assembled matrix for the fine-level flux degrees of freedom on the cell aggregate, or equivalently, on the coarse cell at level $\ell+1$.
Next, let $(\Mat{P}_{\sigma, \,\mathcal{A}^{\ell}})_{\ell+1}^\ell$ be the restriction of $(\Mat{P}_\sigma)_{\ell+1}^\ell$ on $\mathcal{A}^{\ell}$.
We coarsen $\Mat{M}_{\mathcal{A}^{\ell}}^\ell(\Vec{p})$ locally through the Galerkin projection restricted to $\mathcal{A}^{\ell}$:
\begin{equation}
\Mat{M}_{\mathcal{A}^{\ell}}^{\ell+1}(\Vec{p}) := \big( (\Mat{P}_{\sigma, \, \mathcal{A}^{\ell}})_{\ell+1}^\ell \big)^T \Mat{M}_{\mathcal{A}^{\ell}}^\ell(\Vec{p})(\Mat{P}_{\sigma, \, \mathcal{A}^{\ell}})_{\ell+1}^\ell.
\end{equation}
Similarly, we assemble $M^{\ell}_{\mathcal{A}^{\ell-1}_i}$ for all $\mathcal{A}_i^{\ell-1} \subset \mathcal{A}^{\ell}$ to form $\Mat{M}_{\mathcal{A}^{\ell}}^\ell$, which is then coarsened to get
\begin{equation}
\Mat{M}_{\mathcal{A}^{\ell}}^{\ell+1} := \big( (\Mat{P}_{\sigma, \, \mathcal{A}^{\ell}})_{\ell+1}^\ell \big)^T \Mat{M}_{\mathcal{A}^{\ell}}^\ell(\Mat{P}_{\sigma, \, \mathcal{A}^{\ell}})_{\ell+1}^\ell.  
\end{equation}
Note that by definition $\widetilde{ \Mat{\Pi} }^{\ell+1} \Vec{p}^{\ell+1}$ is constant in all $\mathcal{A}^{\ell-1}_i \subset \mathcal{A}^{\ell}$, so
\begin{equation}
p_{\mathcal{A}^{\ell}}
:= ( \widetilde{ \Mat{\Pi} }^{\ell+1} \Vec{p}^{\ell+1})|_{\mathcal{A}^{\ell}}
=
(\widetilde{ \Mat{\Pi} }^{\ell+1} \Vec{p}^{\ell+1})|_{\mathcal{A}^{\ell-1}_i}, \quad \forall\,\mathcal{A}^{\ell-1}_i\subset\mathcal{A}^{\ell}.
\end{equation}
Therefore, by \eqref{eq:local-product-decomposition} in the induction hypothesis, we have
\begin{equation}
\Mat{M}_{\mathcal{A}^{\ell-1}_i}^\ell ( \widetilde{ \Mat{\Pi} }^{\ell+1} \Vec{p}^{\ell+1} ) = \frac{1}{\kappa(p_{\mathcal{A}^{\ell}})} M_{\mathcal{A}^{\ell-1}_i}^\ell, \quad \forall\,\mathcal{A}^{\ell-1}_i\subset\mathcal{A}^{\ell}.  
\end{equation}
This means that all the building blocks of $M_{\mathcal{A}^{\ell}}^{\ell} ( \widetilde{ \Mat{\Pi} }^{\ell+1} \Vec{p}^{\ell+1})$ share the same multiplicative factor $1/\kappa(p_{\mathcal{A}^{\ell}})$. Consequently,
\begin{equation}
\begin{split}
\Mat{M}_{\mathcal{A}^{\ell}}^{\ell+1}  
( \widetilde{ \Mat{\Pi} }^{\ell+1} \Vec{p}^{\ell+1} )
& =
\big( (\Mat{P}_{\sigma, \, \mathcal{A}^{\ell}})_{\ell+1}^\ell\big)^T \Mat{M}_{\mathcal{A}^{\ell}}^\ell ( \widetilde{ \Mat{\Pi} }^{\ell+1} \Vec{p}^{\ell+1} ) (\Mat{P}_{\sigma, \, \mathcal{A}^{\ell}})_{\ell+1}^\ell  \\
& = \big( (\Mat{P}_{\sigma, \, \mathcal{A}^{\ell}})_{\ell+1}^\ell\big)^T \bigg( \frac{1}{\kappa(p_{\mathcal{A}^{\ell}})} \Mat{M}_{\mathcal{A}^{\ell}}^\ell \bigg)(\Mat{P}_{\sigma, \, \mathcal{A}^{\ell}})_{\ell+1}^\ell = \frac{1}{\kappa(p_{\mathcal{A}^{\ell}})} M_{\mathcal{A}^{\ell}}^{\ell+1}.
\end{split}
\end{equation}
$\Mat{M}_{\mathcal{A}^{\ell}}^{\ell+1}( \widetilde{ \Mat{\Pi} }^{\ell+1} \Vec{p}^{\ell+1} )$ is a coarse local transmissibility matrix associated with $\mathcal{A}^{\ell}$.
If we assemble $\Mat{M}_{\mathcal{A}^{\ell}}^{\ell+1} ( \widetilde{ \Mat{\Pi} }^{\ell+1} \Vec{p}^{\ell+1} )$ using the global-to-local map for coarse flux degrees of freedom, we get exactly the global Galerkin projection $\Mat{M}^{\ell+1} ( \widetilde{ \Mat{\Pi} }^{\ell+1} \Vec{p}^{\ell+1} ) = \big((\Mat{P}_\sigma)_{\ell+1}^\ell\big)^T \Mat{M}^\ell ( \widetilde{ \Mat{\Pi} }^{\ell+1} \Vec{p}^{\ell+1} ) (\Mat{P}_\sigma)_{\ell+1}^\ell$.
\end{proof}

\section{Proof of Proposition~\ref{prop:Mpsigma}}\label{sec:Mpsigma}

In this section, we drop the superscript $\ell$ denoting the level for simplicity.
We consider two technical lemmas before concluding the proof of Proposition~\ref{prop:Mpsigma}.

The first lemma is a direct consequence of \eqref{eq:local-product-decomposition}.
We remind the reader that for a cell aggregate $\mathcal{A}_L$, we define the aggregate pressure as $p_{\mathcal{A}_L} := (\widetilde{ \Mat{\Pi}}\Vec{p})|_{\mathcal{A}_L}$.
Let us denote by $I_{\mathcal{A}_L}$ the identity matrix whose size is equal to the number of flux degrees of freedom on $\mathcal{A}_L$.
Let $\widehat{\Mat{M}}( \widetilde{\Mat{\Pi}} \Vec{p} )$ and $\widehat{\Mat{K}}(\widetilde{\Pi}\Vec{p})$ be the block-diagonal matrices whose $L$th diagonal block is respectively $\Mat{M}_{\mathcal{A}_L}(\widetilde{ \Mat{\Pi} }\Vec{p})$ and $\kappa(p_{\mathcal{A}_L}) I_{\mathcal{A}_L}$, where $\Mat{M}_{\mathcal{A}_L}(\widetilde{ \Mat{\Pi} }\Vec{p})$ is the local transmissibility matrix introduced in Proposition~\ref{prop:assemble}.
%
%Same as in Section~\ref{sec:smoothing}, we will drop the superscript $\ell$ throughout this section, but the content is valid for all levels. Let $\widehat{M}(\Vec{p})$ and $\widehat{K}(\Vec{p})$ be block diagonal matrices with the $i$-th diagonal block being $M_{\tau_i}(p_i)$ and $I_{\tau_i}\kappa(p_i)$ respectively:
%\[
%\widehat{M}(\widetilde{\pi}(\Vec{p})) = \begin{bmatrix}
%\ddots & & \\
%& M_{\tau_i}(\tilde{\pi}(\Vec{p})) & \\
%& & \ddots \\
%\end{bmatrix}, \;\text{ and }\; \widehat{K}(\tilde{\pi}(\Vec{p})) = \begin{bmatrix}
%\ddots & & \\
%& I_{\tau_i}\kappa(\tilde{\pi}(\Vec{p})|_{\tau_i}) & \\
%& & \ddots \\
%\end{bmatrix},
%\]
%where $M_{\tau_i}(\tilde{\pi}(\Vec{p}))$ is the element matrix defined in Proposition~\ref{prop:assemble}, and $I_{\tau_i}$ is the identity matrix of size number of local flux dofs on $\tau_i$.
%
\begin{lemma}
\label{lemma:Mp}
We have
\begin{equation}
  \widehat{\Mat{M}}(\widetilde{\Pi}\Vec{p}) = \widehat{\Mat{M}} \widehat{\Mat{K}}(\widetilde{\Pi}\Vec{p})^{-1} ,
  \label{eq:lemma_Mp}
\end{equation}
where $\widehat{M}$ is the block-diagonal matrix whose $L$th block is the static matrix $\Mat{M}_{\mathcal{A}_L}$ defined on cell aggregate $\mathcal{A}_L$ and introduced in Proposition~\ref{prop:assemble}.
\end{lemma}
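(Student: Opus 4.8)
The plan is to establish \eqref{eq:lemma_Mp} by a blockwise comparison, since all the matrices in sight are block-diagonal with diagonal blocks indexed by the cell aggregates $\mathcal{A}_L$ on the level under consideration. Because a product of block-diagonal matrices sharing a common block structure is again block-diagonal, with the blocks multiplied pairwise, it suffices to check that the $L$th diagonal blocks of the two sides of \eqref{eq:lemma_Mp} agree for every $L$.

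First I would record that $\widehat{\Mat{K}}(\widetilde{\Mat{\Pi}}\Vec{p})$ is invertible: its $L$th block is $\kappa(p_{\mathcal{A}_L})\,I_{\mathcal{A}_L}$, and the pressure-dependent multiplier $\kappa$ is strictly positive for every functional relationship considered in this work (for instance $\kappa(p)=e^{\alpha p}$), so $\widehat{\Mat{K}}(\widetilde{\Mat{\Pi}}\Vec{p})^{-1}$ is block-diagonal with $L$th block $\kappa(p_{\mathcal{A}_L})^{-1}I_{\mathcal{A}_L}$. Multiplying on the left by $\widehat{\Mat{M}}$, whose $L$th block is the static matrix $\Mat{M}_{\mathcal{A}_L}$, shows that the $L$th block of $\widehat{\Mat{M}}\,\widehat{\Mat{K}}(\widetilde{\Mat{\Pi}}\Vec{p})^{-1}$ equals $\kappa(p_{\mathcal{A}_L})^{-1}\Mat{M}_{\mathcal{A}_L}$.

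On the other side, the $L$th block of $\widehat{\Mat{M}}(\widetilde{\Mat{\Pi}}\Vec{p})$ is, by definition, the local transmissibility matrix $\Mat{M}_{\mathcal{A}_L}(\widetilde{\Mat{\Pi}}\Vec{p})$, and Proposition~\ref{prop:assemble}---specifically the decomposition \eqref{eq:local-product-decomposition}, with $p_{\mathcal{A}_L} := (\widetilde{\Mat{\Pi}}\Vec{p})|_{\mathcal{A}_L}$---identifies this with $\kappa(p_{\mathcal{A}_L})^{-1}\Mat{M}_{\mathcal{A}_L}$. The two block expressions coincide for each $L$, which proves \eqref{eq:lemma_Mp}. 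There is essentially no obstacle to overcome here: the lemma is a bookkeeping reformulation that isolates the pressure dependence of $\widehat{\Mat{M}}(\widetilde{\Mat{\Pi}}\Vec{p})$ into the single diagonal factor $\widehat{\Mat{K}}(\widetilde{\Mat{\Pi}}\Vec{p})^{-1}$; the only point that warrants a word of justification is the invertibility of $\widehat{\Mat{K}}(\widetilde{\Mat{\Pi}}\Vec{p})$, and the usefulness of the restatement will only become apparent in the next lemma and in the proof of Proposition~\ref{prop:Mpsigma}, where this factored form makes the differentiation with respect to $\Vec{p}$ transparent.
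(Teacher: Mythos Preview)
Your proof is correct and follows essentially the same blockwise argument as the paper: both identify the $L$th diagonal block of each side with $\kappa(p_{\mathcal{A}_L})^{-1}\Mat{M}_{\mathcal{A}_L}$ via Proposition~\ref{prop:assemble} and the definitions of $\widehat{\Mat{M}}$, $\widehat{\Mat{M}}(\widetilde{\Mat{\Pi}}\Vec{p})$, and $\widehat{\Mat{K}}(\widetilde{\Mat{\Pi}}\Vec{p})$. Your explicit remark on the invertibility of $\widehat{\Mat{K}}(\widetilde{\Mat{\Pi}}\Vec{p})$ is a small but welcome addition that the paper leaves implicit.
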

\begin{proof}
Consider the $L$th diagonal block in $\widehat{\Mat{M}}(\widetilde{\Pi}\Vec{p})$.
By definition, this block is equal to $\Mat{M}_{\mathcal{A}_L}(\widetilde{ \Mat{\Pi} }\Vec{p})$.
We know from Proposition~\ref{prop:assemble} that $\Mat{M}_{\mathcal{A}_L}(\widetilde{ \Mat{\Pi} }\Vec{p}) = 1 / \kappa(p_{\mathcal{A}_L}) \Mat{M}_{\mathcal{A}_L}$.
Using this remark with the definitions of $\widehat{\Mat{M}}(\widetilde{\Mat{\Pi} } \Vec{p})$, $\widehat{\Mat{M}}$, and $\widehat{\Mat{K}}(\widetilde{\Pi}\Vec{p})$, we obtain \eqref{eq:lemma_Mp}.
\end{proof}

Using now the notation $\diag( \Vec{u} )$ to denote the diagonal matrix that contains the entries of a vector $\Vec{u}$ on its diagonal, we consider the following lemma:
\begin{lemma}
\label{lemma:Kp}
We have
\begin{equation}
\widehat{\Mat{K}}(\widetilde{\Mat{\Pi}}\Vec{p})^{-1} = \diag\big( \Mat{W}_{\hat{\sigma}p} \Vec{\kappa}_{inv}(\widetilde{\Mat{\Pi}}\Vec{p}) \big),
\end{equation}
where $\Mat{W}_{\hat{\sigma}p}$ is the map from cells to local degrees of freedom, and $\Vec{\kappa}_{inv}(\widetilde{\Mat{\Pi}}\Vec{p})$ is the vector defined in \eqref{eq:definition_inverse_kappa_vector}.
\end{lemma}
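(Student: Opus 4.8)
The plan is to show that both sides of the claimed identity are \emph{diagonal} matrices indexed by the local flux degrees of freedom, and then to verify that they agree entry by entry; no linearization or limiting arguments are needed, since the statement is purely a bookkeeping reformulation of the local decomposition \eqref{eq:local-product-decomposition}.

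First I would unpack the left-hand side. By construction $\widehat{\Mat{K}}(\widetilde{\Mat{\Pi}}\Vec{p})$ is block-diagonal, its $L$th diagonal block being $\kappa(p_{\mathcal{A}_L}) I_{\mathcal{A}_L}$. Since $\kappa > 0$, each such block is invertible with inverse $\kappa(p_{\mathcal{A}_L})^{-1} I_{\mathcal{A}_L}$; because every block is a \emph{scalar} multiple of an identity matrix, $\widehat{\Mat{K}}(\widetilde{\Mat{\Pi}}\Vec{p})^{-1}$ is in fact a diagonal matrix, whose entry at any local flux degree of freedom supported on the aggregate $\mathcal{A}_L$ equals $1/\kappa(p_{\mathcal{A}_L})$.

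Next I would examine the right-hand side. Recall from the proof of Proposition~\ref{prop:assemble} that $\Mat{W}_{\hat{\sigma}p}$ is the Boolean matrix mapping a vector indexed by cells/aggregates to a vector indexed by local flux degrees of freedom, by copying the value associated with the aggregate $\mathcal{A}_L$ onto every local flux degree of freedom belonging to $\mathcal{A}_L$. Applying this to $\Vec{\kappa}_{inv}(\widetilde{\Mat{\Pi}}\Vec{p})$, whose $L$th entry is $1/\kappa((\widetilde{\Mat{\Pi}}\Vec{p})|_{\mathcal{A}_L}) = 1/\kappa(p_{\mathcal{A}_L})$ by \eqref{eq:definition_inverse_kappa_vector}, produces a vector whose component at any local flux degree of freedom on $\mathcal{A}_L$ equals $1/\kappa(p_{\mathcal{A}_L})$; placing these numbers on a diagonal via $\diag(\cdot)$ reproduces exactly the diagonal matrix identified in the previous paragraph, which establishes the identity. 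The only point requiring care---and the closest thing to an obstacle---is bookkeeping: one must check that the ordering of local flux degrees of freedom used to assemble $\widehat{\Mat{K}}(\widetilde{\Mat{\Pi}}\Vec{p})$ matches the ordering used to define $\Mat{W}_{\hat{\sigma}p}$, which is immediate because both follow the same aggregate-by-aggregate convention fixed in the proof of Proposition~\ref{prop:assemble}. This diagonal reformulation is precisely what allows one to differentiate $\Mat{M}(\widetilde{\Mat{\Pi}}\Vec{p}_k)\Vec{\sigma}_k$ with respect to $\Vec{p}_k$ in Proposition~\ref{prop:Mpsigma} without any fine-level computation.
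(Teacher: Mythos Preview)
Your proposal is correct and follows essentially the same approach as the paper: both arguments unpack the definitions of $\widehat{\Mat{K}}(\widetilde{\Mat{\Pi}}\Vec{p})^{-1}$ and of $\Mat{W}_{\hat{\sigma}p}\Vec{\kappa}_{inv}(\widetilde{\Mat{\Pi}}\Vec{p})$, observe that each yields a diagonal matrix whose entry at any local flux degree of freedom on $\mathcal{A}_L$ is $1/\kappa(p_{\mathcal{A}_L})$, and invoke the shared cell-by-cell ordering to conclude. The only cosmetic difference is direction---the paper builds up the right-hand side and matches it to the inverse of $\widehat{\Mat{K}}$, whereas you compute both sides independently and compare---but the content is identical.
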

\begin{proof}
By definition, $\Mat{W}_{\hat{\sigma}p}$ is a Boolean rectangular matrix such that $[\Mat{W}_{\hat{\sigma}p}]_{ij} = 1$ if the $i$th local flux degree of freedom belongs to cell $j$, and $[\Mat{W}_{\hat{\sigma}p}]_{ij} = 0$ otherwise.
We assume that the local flux degrees of freedom are ordered cell-by-cell.
Therefore, we can write the vector $\Mat{W}_{\hat{\sigma}p} \Vec{\kappa}_{inv}( \widetilde{\Mat{\Pi}} \Vec{p} )$ as the concatenation of subvectors $\Vec{w}_{\mathcal{A}_L} := 1/\kappa( p_{\mathcal{A}_L} ) [1 \, \dots \, 1]^T$ whose size is equal to the number of local flux degrees of freedom of cell aggregate $\mathcal{A}_L$.
This means that $\diag\big( \Mat{W}_{\hat{\sigma}p} \Vec{\kappa}_{inv}(\widetilde{\Mat{\Pi}}\Vec{p}) \big)$ is a block-diagonal matrix whose $L$th diagonal block is a diagonal submatrix equal to $\diag( \Vec{w}_{\mathcal{A}_L} )$.
We can rewrite these diagonal blocks as $\diag( \Vec{w}_{\mathcal{A}_L} ) = 1/\kappa( p_{\mathcal{A}_L} ) \diag( [1 \, \dots \, 1]^T ) = 1/\kappa( p_{\mathcal{A}_L} ) \Mat{I}_{\mathcal{A}_L}$. So, $\diag\big( \Mat{W}_{\hat{\sigma}p} \Vec{\kappa}_{inv}(\widetilde{\Mat{\Pi}}\Vec{p}) \big)$ is a block-diagonal matrix whose $L$th diagonal block is equal to $1/\kappa( p_{\mathcal{A}_L} ) \Mat{I}_{\mathcal{A}_L}$, which corresponds to the inverse of the matrix $\widehat{\Mat{K}}(\widetilde{\Pi}\Vec{p})$ defined before Lemma~\ref{lemma:Mp}.
\end{proof}
%% The next lemma follows immediately from Definition~\ref{def:diag}.
%% \begin{lemma}
%% \label{lemma:commute}
%% If $A\in\mathbb{R}^{m\times m}$ is a diagonal matrix and $\blkVec{b}\in\mathbb{R}^{m}$ is a vector
%% then
%% \[
%% A * \blkVec{b}  = \diag(\blkVec{b}) * \diag(A).
%% \]
%% \end{lemma}

We are now ready to prove Proposition~\ref{prop:Mpsigma}.
The global matrix $\Mat{M}(\widetilde{\Mat{\Pi}}\Vec{p})$ is obtained by assembling the local transmissibility matrix using the global-to-local map for the flux degrees of freedom, $\Mat{W}_{\hat{\sigma}\sigma}$, as
\begin{equation}
  \Mat{M}(\widetilde{\Mat{\Pi}}\Vec{p}) = \Mat{W}_{\hat{\sigma}\sigma}^T \widehat{\Mat{M}}(\widetilde{\Mat{\Pi}}\Vec{p}) \Mat{W}_{\hat{\sigma}\sigma}.
  \label{eq:appendix_global_matrix}
\end{equation}
Using \eqref{eq:appendix_global_matrix} and Lemma~\ref{lemma:Mp}, we can write
\begin{equation}
  \begin{split}
    \Mat{M}(\widetilde{\Mat{\Pi}}\Vec{p})\Vec{\sigma} &= \Mat{W}_{\hat{\sigma}\sigma}^T \widehat{\Mat{M}} (\widetilde{\Mat{\Pi} }\Vec{p}) \Mat{W}_{\hat{\sigma}\sigma} \Vec{\sigma} \\
    &= \Mat{W}_{\hat{\sigma}\sigma}^T \widehat{\Mat{M}} \widehat{\Mat{K}}(\widetilde{\Mat{\Pi} }\Vec{p})^{-1} \Mat{W}_{\hat{\sigma}\sigma} \Vec{\sigma}.
  \end{split}
\end{equation}
Next, we use Lemma~\ref{lemma:Kp} to obtain
\begin{equation}
  \Mat{M}(\widetilde{\Mat{\Pi}}\Vec{p})\Vec{\sigma} = \Mat{W}_{\hat{\sigma}\sigma}^T \widehat{\Mat{M}}  \diag \big( \Mat{W}_{\hat{\sigma}p} \Vec{\kappa}_{inv}(\widetilde{\Mat{\Pi}}\Vec{p}) \big) \Mat{W}_{\hat{\sigma}\sigma} \Vec{\sigma}.
  \label{eq:appendix_intermediate_result}
\end{equation}
Let us consider now two vectors $\Vec{u}, \Vec{v}\in\mathbb{R}^{m}$. We remark that
\begin{equation}
 \diag(\Vec{v}) \Vec{u}  = \diag(\Vec{u}) \Vec{v} .
\end{equation}
Applying this remark to the vectors $\Mat{W}_{\hat{\sigma}p} \Vec{\kappa}_{inv}(\widetilde{\Mat{\Pi}}\Vec{p})$ and $\Mat{W}_{\hat{\sigma}\sigma} \Vec{\sigma}$ in \eqref{eq:appendix_intermediate_result} yields
\begin{equation}
  \begin{split}
    \Mat{M}(\widetilde{\Mat{\Pi}}\Vec{p})\Vec{\sigma}
    &= \Mat{W}_{\hat{\sigma}\sigma}^T \widehat{\Mat{M}} \diag \big( \Mat{W}_{\hat{\sigma}\sigma} \Vec{\sigma}\ \big)  \Mat{W}_{\hat{\sigma}p} \Vec{\kappa}_{inv}(\widetilde{\Mat{\Pi}}\Vec{p}) \\
    &= \Mat{N}(\Vec{\sigma}) \Vec{\kappa}_{inv}(\widetilde{\Mat{\Pi}}\Vec{p}),
  \end{split}
\end{equation}
where $\Mat{N}(\Vec{\sigma})$ is defined in Proposition~\ref{prop:Mpsigma}. This concludes the proof.

%% Hence, by Lemma~\ref{lemma:Mp}, Lemma~\ref{lemma:Kp}, and Lemma~\ref{lemma:commute}, we have
%% \begin{equation*}
%% \begin{split}
%% M(\widetilde{\pi}(\Vec{p}))\Vec{\sigma} & = {\tt gdof\_ldof} * \widehat{M}(\tilde{\pi}(\Vec{p})) * {\tt ldof\_gdof} * \Vec{\sigma}\\
%% & = {\tt gdof\_ldof} * \widehat{M} * \left(\widehat{K}(\tilde{\pi}(\Vec{p}))\right)^{-1}  * {\tt ldof\_gdof} * \Vec{\sigma} \\
%% & = {\tt gdof\_ldof} * \widehat{M} * \diag\left({\tt ldof\_element} * \kappa(\tilde{\pi}(\Vec{p}))^{-1}\right) * \left({\tt ldof\_gdof} * \Vec{\sigma}\right) \\
%% & = {\tt gdof\_ldof} * \widehat{M} * \diag\left({\tt ldof\_gdof} * \Vec{\sigma}\right) * \left({\tt ldof\_element}  * \kappa(\tilde{\pi}(\Vec{p}))^{-1}\right)\\
%% & = N(\Vec{\sigma})\kappa(\tilde{\pi}(\Vec{p}))^{-1}.
%% \end{split}
%% \end{equation*}

% --- Refenrences ---
\bibliography{authorList,journalAbbreviations,proceedingCollectionNames,publisherNames,references_NEW}

\end{document}